\newcommand{\mZ}{\mathbb{Z}}
\newcommand{\mQ}{\mathbb{Q}}
\newcommand{\mA}{\mathbb{A}}
\newcommand{\mP}{\mathbb{P}}
\newcommand{\cC}{\mathcal{C}}
\newcommand{\cO}{\mathcal{O}}
\DeclareSymbolFont{cyrletters}{OT2}{wncyr}{m}{n}
\DeclareMathSymbol{\Sha}{\mathalpha}{cyrletters}{"58}
\DeclareMathSymbol{\Sha}{\mathalpha}{cyrletters}{"58}
\newcommand{\brk}[1]{ \left\lbrace #1 \right\rbrace }
\newcommand{\pwr}[1]{ \left( #1 \right) }
\newcommand{\zZ}[1]{ \mathbb{Z}/#1\mathbb{Z}}
\numberwithin{equation}{subsection}
\newtheorem{thmx}{Theorem}
\newtheorem{corx}[thmx]{Corollary}
\numberwithin{equation}{subsection}
\newtheorem{theorem}[subsection]{Theorem}
\newtheorem{lemma}[subsection]{Lemma}
\newtheorem{prop}[subsection]{Proposition}
\newtheorem{proposition}[subsection]{Proposition}
\theoremstyle{definition}
\newtheorem{defn}[subsection]{Definition}
\newtheorem{example}[subsection]{Example}
\newtheorem{question}[subsection]{Question}
\theoremstyle{remark}
\newtheorem{remark}[subsection]{Remark}
\numberwithin{equation}{section} \numberwithin{figure}{section}
\DeclareMathOperator{\Gal}{Gal} 
\DeclareMathOperator{\Aut}{Aut}
\DeclareMathOperator{\im}{Im}
\DeclareMathOperator{\lcm}{lcm}
\DeclareMathOperator{\GL}{GL}
\DeclareMathOperator{\gd}{gd}
\newcommand{\SL}{\textrm{SL}}
\newcommand{\Qbar}{\overline{\QQ}}
\newcommand{\cdef}[1]{\textsf{\textsf{#1}}}
\newcommand\FF{\mathbb{F}}
\newcommand\ZZ{\mathbb{Z}}
\newcommand\Z{\mathbb{Z}}
\newcommand\NN{\mathbb{N}}
\newcommand\QQ{\mathbb{Q}}
\newcommand\OO{\mathcal{O}}
\renewcommand{\leq}{\leqslant}
\renewcommand{\geq}{\geqslant}
\DeclareMathOperator{\ab}{ab}
\DeclareMathOperator{\GSp}{GSp}
\newcommand{\Q}{\mathbb Q}
\renewcommand{\NN}{\mathbb N}
\newcommand{\Rats}{\mathbb{Q}}
\newcommand{\arrow}{\longrightarrow}
\newcommand{\GQ}{\Gal(\overline{\Rats}/\Rats)}
\renewcommand{\Im}{{\rm Im}\,}
\begin{document}
\title{Towards a classification of entanglements of Galois representations attached to elliptic curves}

\author{Harris B. Daniels} 
\address{Harris B. Daniels \\
	Department of Mathematics \\ 
	Amherst College \\
	Amherst, MA 01002
    USA}
\email{hdaniels@amherst.edu}

\author{\'Alvaro Lozano-Robledo}
\address{\'Alvaro Lozano-Robledo \\
	Department of Mathematics \\
	University of Connecticut \\
	Storrs, CT 06269
	USA}
\email{alvaro.lozano-robledo@uconn.edu}

\author{Jackson S. Morrow} 
\address{Jackson S. Morrow \\
	Centre de Recherche de Math\'ematiques, Universit\'e de Montr\'eal\\
    Montr\'eal, Qu\'ebec H3T 1J4\\
    CAN}
\email{jmorrow4692@gmail.com}

\subjclass
{11G05 
	(11F80,  
	 14H10
	 )}  

\keywords{Elliptic curves, Division fields, Entanglement, Modular curves}
\date{\today}

\begin{abstract}
Let $E/\mathbb{Q}$ be an elliptic curve, let $\overline{\mathbb{Q}}$ be a fixed algebraic closure of $\mathbb{Q}$, and let $G_{\mathbb{Q}}=\text{Gal}(\overline{\mathbb{Q}}/\mathbb{Q})$ be the absolute Galois group of $\mathbb{Q}$. 
The action of $G_{\mathbb{Q}}$ on the adelic Tate module of $E$ induces the adelic Galois representation $\rho_E\colon G_{\mathbb{Q}} \to \text{GL}(2,\widehat{\mathbb{Z}}).$

The goal of this paper is to explain how the image of $\rho_E$ can be smaller than expected. To this end, we offer a group theoretic categorization of different ways in which an entanglement between division fields can be explained and prove several results on elliptic curves (and more generally, principally polarized abelian varieties) over $\mathbb{Q}$ where the entanglement occurs over an abelian extension. 
\end{abstract}
\maketitle

\section{\bf Introduction}
\label{sec:intro}

This article is concerned with the classification of (adelic) Galois representations attached to elliptic curves and, in particular, this paper is an attempt to systematically classify reasons that explain when the adelic image is not as large as possible. More concretely, let $E/\Q$ be an elliptic curve, let $\Qbar$ be a fixed algebraic closure of $\Q$, and let $G_\Q=\Gal(\Qbar/\Q)$ be the absolute Galois group of $\Q$. Then $G_\Q$ acts on $E[n]$, the $n$-torsion subgroup of $E(\Qbar)$, and induces a Galois representation
$$\rho_{E,n}\colon \GQ \to \Aut(E[n])\cong \GL(2,\Z/n\Z).$$
More generally, $\GQ$ acts on the Tate module $T(E)=\varprojlim E[n]$ of the elliptic curve $E$, and induces an adelic Galois representation
$$\rho_E\colon \GQ \to \Aut(T(E))\cong \GL(2,\widehat{\Z}).$$
This paper is then aimed at understanding the possibilities for the image $H_E$ of $\rho_E$ (a problem which is sometimes called Mazur's Program B, after \cite{mazurprogramb}). 

If $E$ has complex multiplication (CM), then $H_E$ fails to be open in $\GL(2,\widehat{\Z})$ 
and is well understood (see \cite{lombardo,bourdon-clark,lozanoCMreps}). Otherwise, if $E$ does not have CM, then Serre \cite{serre:OpenImageThm} showed that $H_E$ is an open subgroup of $\GL(2,\widehat{\Z})$. Let $\iota_E$ be the index $[\GL(2,\widehat{\Z}):H_E]$. For elliptic curves over $\Q$, it is always the case that $\iota_E\geq 2$. The reason for this is either because $\rho_{E,2}$ is not surjective or due to the existence of an explained non-trivial quadratic intersection of the division field  $\Q(E[2])$ and $\Q(\zeta_n)\subseteq \Q(E[n])$, for some $n>2$ (see Section \ref{subsec:serre_entangles}). In fact, $\iota_E$ is always even.

The goal of this paper is to understand the different scenarios that force $\iota_E$ to be larger than two. 
There are two phenomena that explain a larger-than-expected index, which we call a vertical Galois collapsing and a horizontal Galois entanglement, respectively. 
Here, we will describe these phenomena for non-CM curves $E/\Q$ and will refer the reader to Subsection \ref{sec-CM-entanglements} for these definitions in the CM setting.

\begin{itemize}
	\item For a prime $\ell$, we say that the $\ell$-adic reprsentation has a \cdef{vertical Galois collapsing} if the projection $\rho_{E,\ell^{\infty}}\colon G_{\Q} \to \GL(2,\widehat{\Z})\to \GL(2,\Z_\ell)$, which corresponds to the action of Galois on the $\ell$-adic Tate module, fails to be surjective. Vertical tanglements are (mostly) well-understood. If $\rho_{E,\ell^\infty}$ is not surjective, then either $\ell=2$ or $3$, or the mod-$\ell$ representation $\rho_{E,\ell}$ is not surjective (by \cite[IV-23, Lemma 3]{serreabelianl-adic}). If $\rho_{E,2^\infty}$ is not surjective, then $\rho_{E,8}$ is not surjective (see  \cite{dok-dok-surjectivity}), and if $\rho_{E,3^\infty}$ is not surjective, then $\rho_{E,9}$ is not surjective (see \cite{elkies3adic}). The conjectural list of all non-surjective images modulo $\ell$ can be found in \cite{zywinapossible}. 
	There is also work of Rouse--Zureick-Brown \cite{rouse2014elliptic} and more recent work of Rouse--Sutherland--Zureick-Brown \cite{RouseSZB:elladic} where the authors (conjecturally) classify all the possible $\ell$-adic images of Galois for elliptic curves $E/\mQ$ for all primes $\ell$. The classification of $\ell$-adic images in the CM case was described in \cite{lozanoCMreps}. 
	
	\item We say that an elliptic curve $E$ has a \cdef{horizontal Galois entanglement} if there exists a composite number $n\geq 6$ such that the projection $\rho_{E,n}\colon G_{\Q}\to \GL(2,\widehat{\Z})\to \GL(2,\Z/n\Z)$, which corresponds to the action of Galois on the $n$-torsion, fails to be surjective, and the entanglement is not explained by a product of vertical tanglements, i.e., 
	$$\rho_{E,n}(G_\Q) \subsetneq \prod_{\ell^a || n} \rho_{E,\ell^a}(G_\Q).$$
	In other words, there are integers $a,b\geq 2$, and at least one prime $q$ that divides $b$ but not $a$, such that $\Q(E[d])\subsetneq \Q(E[a])\cap \Q(E[b])$, where $d=\gcd(a,b)$.
\end{itemize}

In this work, we will primarily be interested in horizontal entanglements, and more concretely on abelian horizontal entanglements (see Section \ref{sec:classificationexplained} for the precise definition). 
To study these horizontal entanglements, we need to analyze the mod-$n$ image of the Galois representation $\rho_{E,n}$ for a composite integer $n\geq 6$. 
In \cite{danielsMorrow:Groupentanglements}, the first and last author describe how one can detect when an elliptic curve $E/\Q$ has a non-trivial $(a,b)$-entanglement, which corresponds to when the intersection of the $a$ and $b$-division fields is an extension of $\mQ$ that is not isomorphic to the $d$-division field, in terms of group theoretic data attached to the image of $\rho_{E,ab}$.  

\subsection{Main contributions}
Our first contribution to the study of entanglements is a categorization of the ways in which an abelian entanglement (Definition \ref{defn:abelian-type}) can occur. 
More precisely, we define several classes of abelian entanglements:~Weil, discriminant, CM, and fake CM entanglements; see Section \ref{sec:classificationexplained} for details. 
Our first result shows that, except for a finite number of isomorphism classes of elliptic curves, the abelian entanglements between two prime division fields are completely explained by Weil, discriminant, CM, and fake CM entanglements.

\begin{thmx}\label{thmx:main0}
	Let $E/\Q$ be an elliptic curve, and let $p$ and $q$ be distinct primes such that $E$ has an abelian $(p,q)$-entanglement of type $S$ (Definition \ref{defn:abelian-type}), for some finite abelian group $S$. Then, there is a finite set $J\subseteq \Q$, such that if $j(E)\not\in J$ and the entanglement is not Weil, discriminant, CM, or fake CM, then $S=\Z/3\Z$ and $(p,q)=(2,7)$ and $j(E)$ belongs to one of the following three explicit one-parameter families of $j$-invariants (which appear in Section 8.1 of \cite{danielsMorrow:Groupentanglements}):
	\begin{align*}
	j_1(t) &:= \frac{(t^{2} + t + 1)^{3}(t^{6} + 5t^{5} + 12t^{4} + 9t^{3} + 2t^{2} + t + 1)P_1(t)^{3}}{t^{14}(t + 1)^{14}(t^{3} + 2t^{2} - t - 1)^{2}}\\
	j_2(t) &:=  \frac{7^{4}(t^{2} + t + 1)^{3}(9t^{6} + 39t^{5} + 64t^{4} + 23t^{3} + 4t^{2} + 15t + 9)P_2(t)^{3}}{(t^{3} + t^{2} - 2t - 1)^{14}(t^{3} + 8t^{2} + 5t - 1)^{2}}\\
	j_3(t) &:= \frac{(t^{2} - t + 1)^{3}(t^{6} - 5t^{5} + 12t^{4} - 9t^{3} + 2t^{2} - t + 1)P_3(t)^{3}}{(t - 1)^{2}t^{2}(t^{3} - 2t^{2} - t + 1)^{14}}
	\end{align*}
	where
	\begin{align*}
P_1(t) &= t^{12} + 8t^{11} + 25t^{10} + 34t^{9} + 6t^{8} - 30t^{7} - 17t^{6} + 6t^{5} - 4t^{3} + 3t^{2} + 4t + 1,\\
P_2(t) &= \scalebox{0.95}{$t^{12} + 18t^{11} + 131t^{10} + 480t^{9} + 1032t^{8} + 1242t^{7} + 805t^{6} + 306t^{5} + 132t^{4} + 60t^{3} - t^{2} - 6t + 1$},\\
P_3(t) &= \scalebox{0.87}{$t^{12} - 8t^{11} + 265t^{10} - 1474t^{9} + 5046t^{8} - 10050t^{7} + 11263t^{6} - 7206t^{5} + 2880t^{4} - 956t^{3} + 243t^{2} - 4t + 1$}.
\end{align*}
Moreover, the set $J$ contains $\{-5^2/2, -5^2\cdot 241^3/2^3,-5\cdot 29^3/2^5,5\cdot 211^3/2^{15} \}$ and the (at most finitely many) $j$-invariants of elliptic curves with a $13$-isogeny and $\Gal(\Q(E[2])/\Q)\cong \Z/3\Z$, such that $\Q(E[2])\subseteq \Q(E[13])$. 
%
%
\end{thmx} 

\begin{corx}\label{cor-main0}
	Let $E/\Q$, $p$, and $q$ be as in Theorem \ref{thmx:main0}, and assume a positive answer to Serre's uniformity question. Then, there is a finite set $J'\subseteq \Q$, such that if $j(E)\not\in J'$ and the entanglement is not Weil, Serre, or CM, then:
	\begin{enumerate}
		\item $S=\Z/3\Z$ and $(p,q)=(2,7)$ and $j(E)$ belongs to one of the three families described in Theorem \ref{thmx:main0},
		\item $S=\Z/2\Z$ and $(p,q)=(2,q)$ with $q\in \{3,5,7,13\}$, and $j(E)$ belongs to one of the explicit one-parameter families of $j$-invariants with discriminant entanglements which appear in Section 8.1 of \cite{danielsMorrow:Groupentanglements}, 
		\item  $S=\Z/2\Z$ and $(p,q)=(3,5)$, and $j(E)$ belongs to one of the following two explicit one-parameter families of $j$-invariants which appear in Section 8.1 of \cite{danielsMorrow:Groupentanglements}:
				\begin{align*}
			j_4(t) &:= \frac{2^{12}P_4(t)^{3}}{(t - 1)^{15}(t + 1)^{15}(t^{2} - 4t - 1)^{3}}\\
			j_5(t) &:=  \frac{2^{12}P_5(t)^{3}}{(t - 1)^{15}(t + 1)^{15}(t^{2} - 4t - 1)^{3}}
			\end{align*}
			where
			\begin{align*}
		P_4(t) &= t^{12} - 9t^{11} + 39t^{10} - 75t^{9} + 75t^{8} - 114t^{7} + 26t^{6} + 114t^{5} + 75t^{4} + 75t^{3} + 39t^{2} + 9t + 1,\\
		P_5(t) &=  \scalebox{0.88}{$211t^{12} - 189t^{11} - 501t^{10} - 135t^{9} + 345t^{8} + 966t^{7} + 146t^{6} - 966t^{5} + 345t^{4} + 135t^{3}- 501t^{2} + 189t + 211$}.
		\end{align*}
		\item The set $J'$ contains all the $j$-invariants that correspond to non-cuspidal rational points on modular curves parametrizing elliptic curves such that they have non-surjective image at two primes $p,q\leq 37$, and at least one of the two is a contained in a normalizer of Cartan image, except for those infinite families mentioned above in (2) and (3). Further, $J\subsetneq J'$ as, for example, $j=11^3/2^3$ and $-29^3\cdot 41^3/2^{15}$ are in $J'$ but not in $J$. 
	\end{enumerate} 
\end{corx}
 
The proofs of Theorem \ref{thmx:main0} and Corollary \ref{cor-main0} are given in Section \ref{sec-other}. Each of $J$ and $J'$ corresponds to a collection of non-cuspidal, non-CM  $\mQ$-points on a finite number of higher genus modular curves, and therefore, it is difficult to explicitly determine it. While we make no attempt to do so in this work, it would be interesting to precisely compute $J$ and $J'$, and hence make the statement of the theorems more precise. 

In the remainder of the work, we study infinite families of Weil and Serre entanglements, describe how CM entanglements can be used to further analyze the adelic image of an elliptic curve with CM, and illustrate how group theoretic definitions of entanglements and our categorization of explained entanglements can be extended to principally polarized abelian varieties over $\Q$.

First, we prove results concerning infinite families of certain Weil entanglements (Definition \ref{defn:weil-type}).  
We exhibit several infinite families of elliptic curves over $\Q$ which have Weil $(a,b)$-entanglements for various tuples $(a,b)$. 
For example, we prove the following result, which is expanded on in Section \ref{sec:explainedentanglements}.

\begin{thmx}\label{thmx:main1}
There are infinitely many $\overline{\QQ}$-isomorphism classes of elliptic curves $E$ over $\Q$ satisfying one of the following conditions:
\begin{enumerate}
\item a Weil $(3,n)$-entanglement of type $\ZZ/2\ZZ$ where $3\nmid n$,
\item a Weil $(5,n)$-entanglement of type $\Z/4\Z$ where $5 \nmid n$,
\item a Weil $(7,n)$-entanglement of type $\Z/6\Z$ where $7 \nmid n$,
\item a Weil $(m,n)$-entanglement of type $\ZZ/2\ZZ$ where  $m\in \{4,6,9\}$ and $n\geq 3$ with $\gcd(m,n)\leq 2$.
\item a Weil $(m,\gcd(4|\Delta_E|,n))$-entanglement of type $\ZZ/2\ZZ\times \ZZ/2\ZZ$, where $m\in \{8,10,12\}$ and $n\geq 3$ with $\gcd(m,n)\leq 2$.
\end{enumerate}
\end{thmx}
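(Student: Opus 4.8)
The plan is to prove each of the four items by producing an explicit infinite family of elliptic curves over $\Q$, parametrized by a rational point on a genus-$0$ modular curve, every member of which carries the asserted entanglement, and then to observe that the family realizes infinitely many $\Qbar$-isomorphism classes. The latter is automatic: along each family the $j$-invariant is a non-constant rational function of the parameter, so it takes infinitely many values, and after discarding the finitely many parameter values giving a singular curve or one with $j$-invariant among the thirteen CM $j$-invariants, infinitely many pairwise $\Qbar$-non-isomorphic curves remain. The relevant moduli spaces all have genus $0$ with a rational point, hence are isomorphic to $\mathbb{P}^1_{\Q}$: for (1)--(3) I would use $X_0(\ell)$ with $\ell\in\{3,5,7\}$ (curves with a rational cyclic $\ell$-isogeny), refined when necessary by an auxiliary rational isogeny of small composite degree; and for (4) I would use $X_1(m)$ with $m\in\{3,4,5,6,7,9\}$ (curves with a rational point of order $m$). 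Pulling back a Weierstrass model of the universal curve over each $\mathbb{P}^1$ produces the desired one-parameter families with non-constant $j$.

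The mechanism producing the entanglements is the interplay of the Weil pairing with rational subgroups. Since $\det\rho_{E,m}$ is the mod-$m$ cyclotomic character, $\Q(\zeta_m)\subseteq\Q(E[m])$ for every $m$; and if $E$ carries a rational cyclic subgroup $C\subseteq E[\ell]$ of order $\ell$, then $G_{\Q}$ acts on $C$ through an isogeny character $\psi\colon G_{\Q}\to\mathbb{F}_\ell^{\times}$ and, by compatibility with the Weil pairing, on $E[\ell]/C$ through $\chi_\ell\psi^{-1}$, so $\rho_{E,\ell}(G_{\Q})$ lies in a Borel subgroup with diagonal characters $(\psi,\chi_\ell\psi^{-1})$. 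Hence $\Q(E[\ell])$ contains the cyclic abelian field $K_\psi$ cut out by $\ker\psi$, of degree dividing $\ell-1$; an open congruence condition on the family parameter forces $\psi$ to be surjective, so that $[K_\psi:\Q]=\ell-1$, which is $2,4,6$ for $\ell=3,5,7$ --- exactly the types in (1)--(3). The technical core is then to pick an integer $n$ with $\ell\nmid n$ for which $K_\psi$ is forced inside $\Q(E[n])$ as well: this is arranged by equipping $E$ with a compatible auxiliary rational isogeny (keeping $E$ on a genus-$0$ modular curve of composite level) and matching the two isogeny characters through the cyclotomic character, the bookkeeping being precisely the group-theoretic description of entanglements from \cite{danielsMorrow:Groupentanglements}. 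Since $\ell\nmid n$ the expected intersection $\Q(E[\gcd(\ell,n)])$ is $\Q$, so $K_\psi\subseteq\Q(E[\ell])\cap\Q(E[n])$ is a genuine $(\ell,n)$-entanglement, and one checks directly from the construction that it is of Weil type in the sense of Definition~\ref{defn:weil-type}, the entanglement being controlled entirely by the isogeny characters and the cyclotomic character. For (4) the same picture applies with $\ell$ replaced by $m$ and $K_\psi$ replaced by a quadratic subfield attached to the point of order $m$ (for $m\in\{3,4,6\}$ this is the quadratic subfield of $\Q(\zeta_m)$), yielding a quadratic entanglement with a suitable $\Q(E[n])$, $n\ge 3$.

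The main obstacle is the matching upper bound: showing the entanglement is \emph{exactly} of type $S$, i.e.\ that $\Q(E[\ell])\cap\Q(E[n])$ is no larger than $K_\psi$ for infinitely many members of the family. This amounts to showing the image of $\rho_{E,\ell n}$ is as large as the rational-isogeny (and, in (4), the torsion) constraint allows --- in particular that $\rho_{E,\ell}(G_{\Q})$ is the full Borel subgroup compatible with $\det=\chi_\ell$, and that the mod-$\ell$ and mod-$n$ images are as independent as the Weil-pairing constraint permits. The standard route is to verify by a finite computation that one explicit member of the family has maximal image subject to the constraint, and then to invoke Hilbert's irreducibility theorem to propagate this to every parameter value outside a thin set, checking separately that the complement of the thin set still contains infinitely many distinct $j$-invariants; this finishes the proof. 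Item (4) for $m\in\{5,7,9\}$ is handled identically, with the quadratic entanglement field taken to be the quadratic subfield of $\Q(\zeta_m)$ and exactness again supplied by an explicit base case together with Hilbert irreducibility.
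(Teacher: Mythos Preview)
Your proposal has a genuine gap in the mechanism that produces the entanglement, and it misidentifies the entanglement field in item~(4).

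For items (1)--(3), you propose to force the isogeny field $K_\psi\subseteq\Q(E[n])$ by adding an auxiliary rational isogeny of degree coprime to $\ell$, staying on a genus-$0$ modular curve of composite level. This is both unnecessary and unworkable. It is unnecessary because $K_\psi$ is abelian, so by Kronecker--Weber $K_\psi\subseteq\Q(\zeta_n)$ for its conductor $n$, and the Weil pairing already gives $\Q(\zeta_n)\subseteq\Q(E[n])$---no second isogeny is needed. It is unworkable because, for instance when $\ell=7$, there is \emph{no} $X_0(7q)$ of genus~$0$ with $q>1$ coprime to $7$ (the only multiple of $7$ in the genus-$0$ list is $7$ itself), so your family does not exist. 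The paper instead fixes the target field $K\subseteq\Q(\zeta_n)$ first and then shows, by explicit computation with division polynomials and a quadratic twist, that there are infinitely many curves with a rational $\ell$-isogeny whose kernel field is exactly $K$: for $\ell=5$ this is a genus-$0$ curve argument (Lemma~\ref{lem-Cd}), for $\ell=7$ a generic-polynomial argument, and for $\ell=3$ the Hesse cubic with a twist.

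For item~(4) your claim that the quadratic entanglement field is ``the quadratic subfield of $\Q(\zeta_m)$'' cannot be right: that field sits in $\Q(\zeta_m)\subseteq\Q(\zeta_{\gcd(m,n)\cdot m})$, and for the relevant $m,n$ it produces no nontrivial intersection with $\Q(\zeta_n)$ over $\Q(\zeta_{\gcd(m,n)})$. The actual mechanism, which your sketch omits entirely, is quadratic twisting: start with the $X_1(m)$ family so that the mod-$m$ image lies in $\{[\begin{smallmatrix}1&\ast\\0&\chi_m\end{smallmatrix}]\}$, then twist by a squarefree $d$ with $\Q(\sqrt{d})\subseteq\Q(\zeta_n)$ but $\sqrt{d}\notin\Q(\zeta_m)$. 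The twist inserts $\sqrt{d}$ into $\Q(E[m])\cap\Q^{\ab}$, and the Weil pairing puts it in $\Q(E[n])$ as well. The paper then pins down the exact image (hence the exact entanglement type) case by case using the classification of isogeny-torsion graphs, not Hilbert irreducibility; everything is explicit.
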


In Section \ref{sec:explainedCM}, we shift our focus to entanglements of elliptic curves $E/\Q$ with complex multiplication. 
By work of the second author \cite[Theorem 1.2]{lozanoCMreps}, there is a compatible system of bases for $E[n]$ such that the image of $\rho_{E}$ is contained inside an extension $\mathcal{N}_{\delta,\phi}(\widehat{\Z})$ of a certain Cartan subgroup (see Definition \ref{defn-CMimage}) and the index of the image inside of this group divides $|\OO_{K,f}^\times|$. 
When the elliptic curve $E/\Q$ has CM, entanglements between division fields become much more prevalent. 
Indeed, if $E$ has CM by an order $\mathcal{O}_{K,f}$ of conductor $f\geq 1$ of an imaginary quadratic field $K$ and $n\geq 3$, then $K\subseteq \Q(E[n])$ by \cite[Lemma 3.15]{BCSTorPointsOnCM}, and so such elliptic curves will always have an entanglement between their $a$ and $b$ division fields once $a,b\geq 3$. 

We leverage this information to determine the index of the adelic representation associated to an elliptic curve over $\Q$ with CM, and we are able to precisely determine what this index is in several settings.

\begin{thmx}\label{thmx:main2}
Let $E/\Q$ be an elliptic curve with CM by an order $\OO_{K,f}$ of conductor $f\geq 1$ in an imaginary quadratic field $K$ with $\Delta_K\neq -4$ and $j(E)\neq 0$. 
For a choice of compatible bases of $E[n]$ for each $n\geq 2$, the index of the image of $\rho_{E}$ in $\mathcal{N}_{\delta,\phi}(\widehat{\mZ})$ is $2$.
\end{thmx}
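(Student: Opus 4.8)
The plan is to show, under the stated hypotheses on $K$ and $j(E)$, that the index $[\mathcal{N}_{\delta,\phi}(\widehat{\Z}):H_E]$ is exactly $2$, where $H_E = \Im(\rho_E)$. By \cite[Theorem 1.2]{lozanoCMreps} we already know that $H_E \subseteq \mathcal{N}_{\delta,\phi}(\widehat{\Z})$ with index dividing $|\OO_K^\times|$. The key observation is that under the hypothesis $\Delta_K \neq -4, -8$ and $j(E)\neq 0$, we have $\OO_K^\times = \{\pm 1\}$, so the index divides $2$; in the case $K = \QQ(\sqrt{-2})$, although $\OO_K^\times = \{\pm1\}$ as well (it is $\Delta_K = -8$ that is the ring of integers, but $j\neq 0,1728$ forces the relevant unit group to be $\{\pm1\}$ once we descend to $\QQ$), we likewise reduce to showing the index is not $1$. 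So the entire content of the theorem is to rule out the possibility that $\rho_E$ is surjective onto $\mathcal{N}_{\delta,\phi}(\widehat{\Z})$ — equivalently, to exhibit an index-$2$ obstruction that is always present.

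First I would identify the canonical index-$2$ obstruction: it comes from the interaction of the Weil pairing with complex multiplication. Concretely, since $E$ has CM by $\OO_K$ over $\QQ$ and $K$ is imaginary quadratic, complex conjugation acts on $E[n]$ for $n\geq 3$ via an element that, combined with the determinant-equals-cyclotomic-character constraint, forces $K \subseteq \QQ(E[n])$ (this is the input from \cite[Lemma 3.15]{BCSTorPointsOnCM} cited in the excerpt, and from $\Delta_K \in \QQ(E[n])$ already at small level). This means the mod-$n$ image, for every $n$ divisible by $\disc(\OO_K)$, lies in the index-$2$ subgroup of $\mathcal{N}_{\delta,\phi}(\Z/n\Z)$ cut out by the condition that a certain quadratic character (the one attached to $K$) factors through the cyclotomic quotient. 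I would make this precise by writing down, inside $\mathcal{N}_{\delta,\phi}(\widehat{\Z})$, the explicit homomorphism to $\{\pm1\}$ whose kernel contains $H_E$: it records whether an element of the normalizer lies in the Cartan or in the non-trivial coset, read against the quadratic residue symbol modulo $|\Delta_K|$ of its "cyclotomic part." Checking that this is a well-defined homomorphism on $\mathcal{N}_{\delta,\phi}(\widehat{\Z})$, and that $H_E$ lies in its kernel, is the crux; it should follow from the compatibility of the CM structure with the Galois action (the "main theorem of complex multiplication" / the description of $\rho_E$ in \cite{lozanoCMreps}) together with the fact that $\sqrt{\Delta_K}$ lies in a cyclotomic field.

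Second, having produced an index-$2$ subgroup of $\mathcal{N}_{\delta,\phi}(\widehat{\Z})$ containing $H_E$, I would show $H_E$ equals it — i.e., the index is not larger than $2$. Here I invoke the index-divides-$|\OO_K^\times|=2$ bound from \cite[Theorem 1.2]{lozanoCMreps}: since $H_E$ sits in an index-$2$ subgroup and has index dividing $2$ in the whole group, it must equal that subgroup. This is the step where the hypotheses $\Delta_K \neq -4,-8$, $j(E)\neq 0$ (and the separate treatment of $\QQ(\sqrt{-2})$) are essential — they are precisely what guarantees $|\OO_K^\times| = 2$ rather than $4$ or $6$; for $j(E)=0$ or $j(E)=1728$ the extra automorphisms enlarge the possible index and the argument genuinely changes. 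I expect the main obstacle to be the first part: verifying that the quadratic obstruction is simultaneously (a) genuinely nontrivial on $\mathcal{N}_{\delta,\phi}(\widehat{\Z})$, (b) adelically consistent (i.e., it is a single $\widehat{\Z}$-level character, not just a mod-$n$ phenomenon for each $n$ separately), and (c) always satisfied by $H_E$ regardless of the particular CM elliptic curve over $\QQ$ — this requires carefully unwinding the normalizer-of-Cartan description and the role of the discriminant in the cyclotomic field, and making sure no additional independent obstruction appears that would push the index to $1$ for some special curves.
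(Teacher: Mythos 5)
There is a genuine gap at exactly the point you flagged as the crux: the ``canonical index-$2$ obstruction'' you propose on $\mathcal{N}_{\delta,\phi}(\widehat{\Z})$ is identically trivial, so it produces no proper subgroup at all. Write $\chi_K=\left(\frac{\Delta_K}{\cdot}\right)$ and let $s(g)\in\{\pm1\}$ record whether $g$ lies in $\cC_{\delta,\phi}(\widehat{\Z})$ or in the nontrivial coset; this is well defined because $\mathcal{N}_{\delta,\phi}(n)=\langle \cC_{\delta,\phi}(n),w\rangle$ with $w=\begin{psmallmatrix}-1&0\\ \phi&1\end{psmallmatrix}$ has index $2$ over the Cartan at every level. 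The determinant of a Cartan element is a value of the norm form $a^2+ab\phi-\delta b^2$, hence a local norm from $K\otimes\QQ_p$ at every prime $p\mid\Delta_K$; since $\chi_K$ has conductor $|\Delta_K|$ and its local components at the ramified primes are the local norm-residue characters, $\chi_K(\det g)=+1$ for every $g\in\cC_{\delta,\phi}(\widehat{\Z})$. For $g=cw$ in the nontrivial coset, $\det g=-\det c$, so $\chi_K(\det g)=\chi_K(-1)=-1$ because $K$ is imaginary. (Concretely, for $\Delta_K=-p$ the Cartan mod $p$ is $\{\begin{psmallmatrix}a&b\\0&a\end{psmallmatrix}\}$, with square determinant, while the other coset has determinant $-a^2$, a nonsquare.) Hence $s(g)=\chi_K(\det g)$ for \emph{every} $g\in\mathcal{N}_{\delta,\phi}(\widehat{\Z})$: your homomorphism $\epsilon$ is the trivial character and its kernel is the whole group. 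This is precisely what the paper means in Section \ref{sec:explainedCM} when it says the CM entanglement $K\subseteq\Q(E[n])$ is already ``baked into'' $\mathcal{N}_{\delta,\phi}(\widehat{\Z})$: that containment imposes no condition beyond membership in the normalizer, so by itself it cannot rule out surjectivity.

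The lower bound therefore requires curve-dependent input, and in fact no single universal character can do the job: the index-$2$ subgroup of $\mathcal{N}_{\delta,\phi}(\widehat{\Z})$ containing $\Im(\rho_E)$ genuinely varies with $E$ (it changes under quadratic twisting). The paper's argument reduces, via quadratic twists, to an explicit finite list of representatives (Table \ref{table:CMjnvariant}, and the four curves $E_1,\dots,E_4$ for $K=\Q(\sqrt{-2})$). For those representatives the index $2$ is forced by a vertical tanglement at the ramified prime $p\mid\Delta_K$ (the kernel of the $p$-isogeny is defined over a degree $(p-1)/2$ field, or its quadratic subfield coincides with that of $\Q(\zeta_p)$; for $K=\Q(\sqrt{-2})$, by an explicit computation at level $8$). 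For a nontrivial twist by a squarefree $d$ prime to $p$, the mod-$p$ image becomes all of $\mathcal{N}_{\delta,\phi}(p)$, and instead one exhibits a non-CM horizontal entanglement: $\Q(\sqrt{d})\subseteq\Q(E[p])\cap\Q(E[n])$ with $n$ the conductor of $\Q(\sqrt{d})$ and $\Q(\sqrt{d})\neq K$ (for $K=\Q(\sqrt{-2})$, the analogous statement at level $16n$), which is not absorbed by $\mathcal{N}_{\delta,\phi}$ and forces index $2$ at level $pn$ (resp.\ $16n$). Your upper-bound step---the index divides $|\OO_{K,f}^\times|=2$ by \cite[Theorem 1.2]{lozanoCMreps}---is correct and matches the paper, but without replacing your trivial character by an argument of this curve-by-curve kind, the proof that the index is not $1$ does not go through.
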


\begin{remark}
When the $j$-invariant of $E$ is either 0 or 1728, we are not able to exactly determine the index of the image of $\rho_{E}$ in $\mathcal{N}_{\delta,\phi}(\widehat{\mZ})$, but we can exclude the possibility of certain indices occurring. Our work provides a different proof of certain cases of a more general result of Campagna and Pengo \cite{CampagnaAndPengo}, whose work we were not aware of until after our paper was finished. In their work (\cite[Corollary 4.6]{CampagnaAndPengo}) they show that for an elliptic curve over $\Q$ with CM by an order $\mathcal{O}_{K,f}$ of an imaginary quadratic field, the index of the image of $\rho_E$ in $\mathcal{N}_{\delta,\phi}(\widehat{\mZ})$ is $|\mathcal{O}_{K,f}^\times|$. Moreover, in \cite[Theorem 5.5]{CampagnaAndPengo}, they analyze the level where the entanglement happens. 
\end{remark}

\begin{remark}
We also mention another work of Campagna and Pengo \cite{CampagnaPengo:HowBig} which studies how large the image of $\rho_E(G_{\Q})$ is inside of a subgroup $\mathcal{G(E/\Q)}$ of $\Aut(T(E))$ when $E/\Q$ is a CM elliptic curve.  They provide a closed formula for the index $[\mathcal{G(E/\Q)} : \rho_{E}(G_{\Q})]$ using the classical theory of complex multiplication.  We refer the reader to \textit{loc.~cit.~}Section 2 for the definition of $\mathcal{G(E/\Q)}$. 
\end{remark}

Our final result concerns entanglements for principally polarized abelian varieties $A$ over $\Q$ of arbitrary dimension. 
The definition of entanglements in terms of intersection of division fields carries over for abelian varieties, however the group theoretic definitions require more care as the determinant of the mod $n$ image of Galois associated to $A$ over $\Q$ need not be surjective. 
In Section \ref{sec:explainedhigherdim}, we generalize a result of the authors concerning an infinite family of elliptic curves over $\Q$ with a Weil $(2,n)$-entanglement of type $\zZ{3}$ to the setting of principally polarized abelian varieties.  

\begin{thmx}\label{thmx:main3}
Let $\ell > 3$ be a prime number. 
Suppose that $\ell - 1 = 2e$ where $e = 2g+1$ is some odd integer. 
There exist infinitely many principally polarized abelian varieties $A/\mQ$ of dimension $g$ which have a Weil $(2,\ell)$-entanglement of type $\zZ{e}$.
\end{thmx}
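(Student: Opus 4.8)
The plan is to realize the common subfield as the maximal totally real subfield of $\mQ(\zeta_\ell)$ and to force it into the mod-$2$ division field by building $A$ as a carefully chosen hyperelliptic Jacobian. Set $e = (\ell-1)/2$, so by hypothesis $e = 2g+1$ is odd and $e\geq 3$, and let $L\subseteq \mQ(\zeta_\ell)$ be the unique subfield of degree $e$ over $\mQ$; since $\Gal(\mQ(\zeta_\ell)/\mQ)\cong(\ZZ/\ell\ZZ)^\times$ is cyclic, $L$ is cyclic over $\mQ$, and in fact $L=\mQ(\zeta_\ell)^+=\mQ(\zeta_\ell+\zeta_\ell^{-1})$ is the maximal real subfield. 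Choose a primitive element $\alpha$ of $L/\mQ$ (e.g. $\alpha=\zeta_\ell+\zeta_\ell^{-1}$) with minimal polynomial $f_\alpha\in\mQ[x]$; then $f_\alpha$ is irreducible of degree $e=2g+1$, has cyclic Galois group $\zZ{e}$, and splitting field exactly $L$. Let $C_\alpha\colon y^2=f_\alpha(x)$ and $A_\alpha=\Jac(C_\alpha)$, a principally polarized abelian variety over $\mQ$ of dimension $g$ with its canonical polarization.

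Next I identify the two relevant division fields. Because $\deg f_\alpha=2g+1$ is odd and $f_\alpha$ is separable, the standard description of the $2$-torsion of an odd-degree hyperelliptic Jacobian shows that $\rho_{A_\alpha,2}$ has image the permutation action of $\Gal(L/\mQ)$ on differences of Weierstrass points, and this action is faithful on $A_\alpha[2]$; hence $\mQ(A_\alpha[2])$ equals the splitting field of $f_\alpha$, namely $L$. On the other hand, the Weil pairing attached to the principal polarization gives a perfect $G_\mQ$-equivariant alternating pairing $A_\alpha[\ell]\times A_\alpha[\ell]\to\mu_\ell$, so $\mQ(\zeta_\ell)\subseteq \mQ(A_\alpha[\ell])$ and therefore $L\subseteq \mQ(A_\alpha[\ell])$. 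Since $\gcd(2,\ell)=1$ we have $\mQ(A_\alpha[\gcd(2,\ell)])=\mQ$, and combining the two inclusions yields
\[
L\;\subseteq\; \mQ(A_\alpha[2])\cap\mQ(A_\alpha[\ell])\;\subseteq\;\mQ(A_\alpha[2])\;=\;L,
\]
so the intersection is exactly $L$, a $\zZ{e}$-extension of $\mQ$ contained in $\mQ(\zeta_\ell)$. By construction this is a Weil $(2,\ell)$-entanglement of type $\zZ{e}$ in the sense of Definitions \ref{defn:abelian-type} and \ref{defn:weil-type}. (For $g=1$, $\ell=7$ this recovers the cubic field $\mQ(\zeta_7)^+$ as $\mQ(E[2])$ and should match the one-parameter families $j_1,j_2,j_3$ appearing in Theorem \ref{thmx:main0}.)

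Finally I check that infinitely many $\overline{\mQ}$-isomorphism classes arise. As $\alpha$ varies over the Zariski-open, $e$-dimensional set of primitive elements of $L$, every $C_\alpha$ has $2$-torsion field $L$ and the analysis above goes through verbatim. Two members $C_\alpha$ and $C_\beta$ are $\overline{\mQ}$-isomorphic only if their branch loci $\{\sigma(\alpha):\sigma\in\Gal(L/\mQ)\}\cup\{\infty\}$ and $\{\tau(\beta):\tau\in\Gal(L/\mQ)\}\cup\{\infty\}$ are related by an element of $\PGL(2,\overline{\mQ})$; a short Galois-descent argument (using that a nontrivial affine transformation can permute at most finitely many finite sets of $\geq 3$ points in a fixed $\mathrm{PGL}_2$-orbit) shows this forces $\beta$ into one of finitely many $2$-dimensional affine orbits associated to $\alpha$. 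Since $e=2g+1\geq 3$, the image in the coarse moduli space of genus-$g$ hyperelliptic curves is positive-dimensional, hence infinite, giving infinitely many $\overline{\mQ}$-isomorphism classes $A_\alpha=\Jac(C_\alpha)$.

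I expect the main obstacle to be precisely this last non-isotriviality step: pinning down exactly when $C_\alpha\cong_{\overline{\mQ}}C_\beta$ and confirming that the family cannot collapse to finitely many classes. A secondary point that must be cited carefully (rather than reproved) is the faithfulness of the permutation action of $\zZ{e}$ on $\Jac(C_\alpha)[2]$, equivalently the assertion $\mQ(A_\alpha[2])=L$ exactly (not a proper subfield), which is the reason the entanglement has type exactly $\zZ{e}$; once these are in hand, the Weil-pairing inclusion and the membership $L\subseteq\mQ(\zeta_\ell)$ make the Weil-type conclusion immediate.
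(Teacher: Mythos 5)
Your proposal is correct and follows essentially the same route as the paper: both realize the degree-$e$ subfield $L\subseteq\mQ(\zeta_\ell)$ (the paper via Gauss's explicit period polynomial, you via the minimal polynomial of a primitive element of $\mQ(\zeta_\ell)^{+}$) as the splitting field of an odd-degree polynomial $f$, take the Jacobian of $y^2=f(x)$ so that $\mQ(A[2])=L$, and use the Weil pairing to get $L\subseteq\mQ(\zeta_\ell)\subseteq\mQ(A[\ell])$. The only divergence is the infinitude step, where the paper cites the Calegari--Chidambaram--Roberts family $y^2=\det(xI-\sum_j b_j\alpha_j)$ --- which is in fact the same family as yours, since such a matrix is multiplication by an element $\beta\in L$ and its characteristic polynomial is your $f_\beta$ --- whereas you argue non-isotriviality directly; your sketch (finite $\PGL_2$-stabilizer of the branch locus, plus Galois descent showing any isomorphism can be taken to come from a $\mQ$-rational affine map fixing $\infty$, hence confining $\beta$ to finitely many two-dimensional $\mQ$-affine orbits inside the $e$-dimensional space $L$ with $e\geq 3$) is correct and completable.
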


In addition to these results, we also pose two question related to the behavior of Weil entanglements (see Questions \ref{question:explained} and \ref{question:boundsconductor}). 

\subsection{Related results}
Over the past five years, the study of entanglements has significantly developed and matured. 
The first results on entanglements can be traced to Brau--Jones \cite{brau3in2} and the last author \cite[Theorem 8.7]{morrow2017composite} where their results classified all elliptic curves $E/\Q$ with $(2,3)$-entanglement of non-abelian type. 
More recently, the first and second author \cite{danielsLR:coincidences} determined the elliptic curves $E/\Q$ and primes $p$ and $q$ such that $\mQ(E[p])\cap \mQ(\zeta_{q^k})$ is non-trivial and determined the degree of this intersection, and consequently, they describe all elliptic curves $E/\Q$ and integers $m,n$ such that the $m$-th and $n$-th division fields coincide.
In the setting of CM elliptic curves, work of Campagna--Pengo \cite{CampagnaAndPengo} studies when division fields become linearly disjoint.

Finally, there have been very recent works which attempt to systematically study entanglements. 
Jones--McMurdy \cite{JonesM:Nonabelianentanglements} have determined the genus zero modular curves and their $j$-maps where rational points on these modular curves correspond to elliptic curves over any number field $K$ with entanglements of non-abelian type. 
Also, the first and last author \cite{danielsMorrow:Groupentanglements} initiated a group theoretic perspective to studying entanglements and completely classified the infinite families of elliptic curves over $\Q$ which have an ``unexplained" $(p,q)$-entanglement where $p,q$ are distinct primes.

\subsection{Outline of paper}
In Section \ref{sec:grouptheoreticdefinitions}, we recall the group theoretic definition of explained entanglements established by the first and last author in \cite{danielsMorrow:Groupentanglements}. 
In Section \ref{sec:classificationexplained}, we offer a categorization of explained entanglements, and in particular, we define the notions of abelian, Weil, discriminant (and Serre), and CM entanglements.  
In Section \ref{sec-proofs_main_theorems}, we prove Theorem \ref{thmx:main0} and Corollary \ref{cor-main0}. 

The next two sections focus on proving results related to our categorization. In Section \ref{sec:explainedentanglements}, we construct several infinite families of elliptic curves over $\Q$ which have certain Weil entanglements using various methods such as division polynomials and isogeny-torsion graphs and prove Theorem \ref{thmx:main1}. 
In Section \ref{sec:explainedCM}, we analyze the adelic image of Galois associated to a CM elliptic curve and prove Theorem \ref{thmx:main2}. 
We extend the group theoretic notions of entanglements to principally polarized abelian varieties over $\Q$ of arbitrary dimension and prove Theorem \ref{thmx:main3} in Section \ref{sec:explainedhigherdim}. 

Finally, in Section \ref{sec:concludingremarks}, we discuss future avenues of study for explained entanglements. 

\subsection{Conventions}
We establish the following conventions throughout the paper.

\subsubsection*{Elliptic curves}
For a field $k$, we will use $E/k$ to denote an elliptic curve over $k$. 
For an element $d \in k^{\times}/(k^{\times})^2$, the twist of $E$ by $d$ will be denoted by $E^{(d)}$. 
Any particular elliptic curve over $\mQ$ mentioned in the paper will be given by LMFDB reference and a link to the corresponding LMFDB \cite{lmfdb} page when possible. 
We will abbreviate an elliptic curve $E/k$ having complex multiplication by an order $\cO_{K,f}$ in an imaginary quadratic field $K$ by saying that $E$ has CM by $\mathcal{O}_{K,f}$. 
We will use the notation $j_{K,f}$ to denote a $j$-invariant with CM by the order of conductor $f$ of $\cO_K$ where all other such $j$-invariants are conjugates of this one. 

\subsubsection*{Groups}
We set some notation for specific subgroups of $\GL(2,\zZ{\ell})$ for $\ell \geq 3$. Let $\ell{\text Cs}$ be the subgroup of diagonal matrices. 
Let $\epsilon = -1$ if $\ell \equiv 3 \pmod 4$ and otherwise let $\epsilon \geq 2$ be the 
smallest integer which is not a quadratic residue modulo $\ell$. 
Let $\ell{\text Cn}$ be the subgroup consisting of matrices of the form $\begin{psmallmatrix} a & b\epsilon \\ b & a \end{psmallmatrix}$ with $(a,b) \in \zZ{\ell}^2 \setminus \brk{(0,0)}$. 
Let $\ell{\text Ns}$ and $\ell{\text{Nn}}(\ell)$ be the normalizers of $\ell{\text Cs}$ and $\ell{\text Cn}$, respectively, in $\GL(2,\zZ{\ell})$. 
We have $[\ell{\text{Ns}} : \ell{\text{Cs}}] = 2$ and the non-identity coset of $\ell{\text{Cs}}(\ell)$ in $\ell{\text{Ns}}(\ell)$ is represented by $\begin{psmallmatrix} 0 & 1 \\ 1 & 0 \end{psmallmatrix}$. 
Similarly, $[\ell{\text{Nn}} : \ell{\text{Cn}}] = 2$ and the non-identity coset of $\ell{\text{Cn}}$ in $\ell{\text{Nn}}$ is represented by $\begin{psmallmatrix} 1 & 0 \\ 0 & -1 \end{psmallmatrix}$. Let $\ell{\text B}$ be the subgroup of upper triangular matrices in $\GL(2,\zZ{\ell})$. 
This notation was established by Sutherland in \cite{Sutherland} and is used in the LMFDB \cite{lmfdb}. We will also use Sutherland's notation for the less standard subgroups of level $\ell$ (and also for those of level $\ell=2$). 

\subsection{Comments on code}
All of the computations in this paper were performed using \texttt{Magma} \cite{Magma}. The code used to do the computations can be found at the following link. 
\begin{center}
{\color{black}{\url{https://github.com/jmorrow4692/ExplainedEntanglements}}}
\end{center}

\subsection{Acknowledgments}
We thank Shiva Chidambaram, Jeremy Rouse, and Andrew Sutherland for helpful correspondences. 
We are grateful to Lea Beneish, Abbey Bourdon, Garen Chiloyan, and Nathan Jones for many helpful comments on an earlier draft of this paper, and to Hanson Smith for conversations related to the phrase vertical collapsing. We would also like to thank Francesco Campagna and Riccardo Pengo for pointing out some inaccuracies in an earlier version of the paper and Zo\'e Yvon for alerting us to a needed correction in Definition \ref{defn:weil-type}. 
Finally, we would like to thank the referees for their useful comments and suggestions on how to improve the paper.

\section{\bf Group theoretic definition of entanglement}
\label{sec:grouptheoreticdefinitions}
In this section, we recall the group theoretic definitions for explained entanglements from  \cite{danielsMorrow:Groupentanglements}. 

\subsection*{Notation}
Let $G$ be a subgroup of $\GL(2,\Z/n\Z)$ for some $n\geq 2$ with surjective determinant, let $a <  b$ be divisors of $n$, let $c= \lcm(a,b)$, and let $d=\gcd(a,b)$. 
Let $\pi_c\colon \GL(2,\zZ{n})\to \GL(2,\zZ{c})$ denote the natural projection map, and set $G_c := \pi_c(G)$. 
Then, for each $e\in \{a,b,d\}$, we have the following reduction maps and normal subgroups of $G_c$
$$\pi_e\colon \GL(2,\zZ{c}) \to \GL(2,\zZ{e}), \quad  N_e := \ker(\pi_e) \cap G_c.$$

  We now recall one of the definitions for when $G$ represents an $(a,b)$-entanglement. 

\begin{defn}\label{def:representsentanglement}
With the notation above, we say that the group $G$ \cdef{represents a non-trivial $(a,b)$-entanglement} if 
\[
\langle N_a,N_b\rangle \subsetneq N_d.\]
The \cdef{type} of the entanglement is the isomorphism type of the group $N_d/\langle N_a, N_b\rangle$. 
\end{defn}

Below is a diagram summarizing the notation and definitions for group-theoretical entanglements.

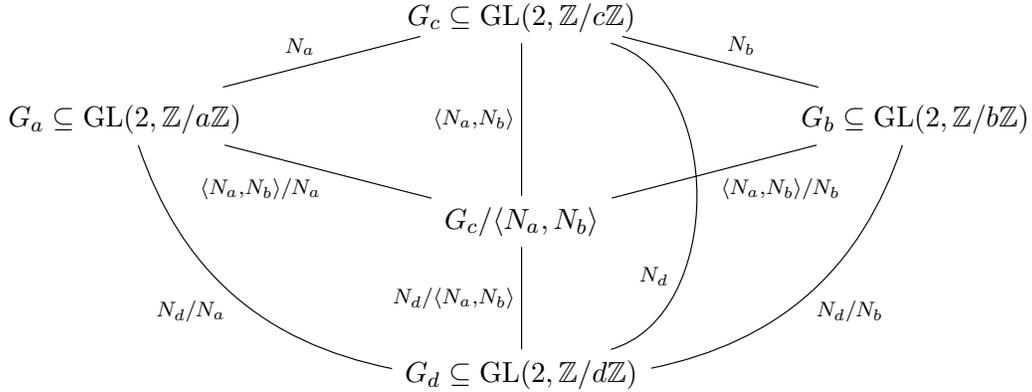
\begin{figure}[h!]
	\begin{tikzcd}
		&  & G_c\subseteq\GL(2,\mathbb{Z}/c\mathbb{Z}) \arrow[dd, "{\langle N_a,N_b\rangle}"', no head] \arrow[lld, "N_a"', no head, shift right=1] \arrow[rrd, "N_b", no head, shift left=1] \arrow[dddd, "N_d"', no head, crossing over, bend left=74, near end] &  &                                                                                                                                                            \\
		G_a\subseteq \GL(2,\mathbb{Z}/a\mathbb{Z}) \arrow[rrd, "{\langle N_a,N_b\rangle/N_a}"', no head] \arrow[rrddd, "N_d/N_a"', no head, bend right] &  &                                                                                                                                                                                                                                    &  & G_b\subseteq\GL(2,\mathbb{Z}/b\mathbb{Z}) \arrow[lld, "{\langle N_a,N_b\rangle/N_b}", no head] \arrow[llddd, "N_d/N_b", no head, bend left] \\
		&  & {G_c/\langle N_a,N_b\rangle} \arrow[dd, "{N_d/\langle N_a,N_b\rangle}"', no head]                                                                                                                                                  &  &                                                                                                                                                            \\
		&  &                                                                                                                                                                                                                                    &  &                                                                                                                                                            \\
		&  & G_d\subseteq\GL(2,\mathbb{Z}/d\mathbb{Z})                                                                                                                                                                            &  &                                                                                                                                                           
	\end{tikzcd}
\caption{A diagram describing group-theoretical entanglements. Lines denote maps, together with the subgroup that one has to quotient the group on top by to get to the group below.}
\label{fig-entangle-alt} 
\end{figure}

We now impose a maximality condition on a group representing a non-trivial entanglement. 

\begin{defn}\label{defn:primitiveentanglement}
Consider the set 
\[
\mathcal{T}_G = \{ ( (a,b), H) \, | \, G \text{ represents a non-trivial $(a,b)$-entanglement of type $H$} \}.
\] 
We define a relation on $\mathcal{T}_G$ by declaring that $ ((a_1,b_1), H_1) \leq ( (a_2,b_2), H_2 )$ if:
\begin{enumerate}
\item $H_1$ and $H_2$ are isomorphic and either $(a_2 \mid a_1$ and $b_2\mid b_1)$ or  $(b_2 \mid a_1$ and $a_2 \mid b_1)$, or
\item $H_1$ is isomorphic to a quotient of $H_2$ and either $(a_1 \mid a_2$ and $b_1 \mid b_2)$ or $(b_1 \mid a_2$ and $a_1 \mid b_2)$.
\end{enumerate}
We say the group $G$ represents a \cdef{primitive $(a,b)$-entanglement of type $H$} if $( (a,b), H )$ is the unique maximal element of $\mathcal{T}_G$ and $n = \lcm(a,b)$. 
\end{defn}

\begin{remark}
For a given $G$, if the set $\mathcal{T}_G$ has no maximal element with respect to this relation, then $G$ does not represent any primitive entanglement. 
\end{remark}


We now apply these definitions to elliptic curves. 
For $E/\mQ$ an elliptic curve and for an integer $n\geq 2$, let $G = \Im(\rho_{E,n}) \subseteq \GL(2,\zZ{n})$ denote the image of the Galois representation associated to the action on $E[n]$.  
Recall that the $n$-division field $\mQ(E[n])$ is the fixed field of $\overline{\mQ}$ by the kernel of the representation $\rho_{E,n}$, and so the Galois group of this number field is isomorphic to the image of the mod $n$ representation.

In \cite[Definition 1.1]{danielsMorrow:Groupentanglements}, the first and last author defined a notion of entanglement for elliptic curves, which we now rephrase in terms of the definitions of this paper.

\begin{defn}\label{def:ellipticentanglement}
We say that an elliptic curve $E/\mQ$ has a \cdef{non-trivial $(a,b)$-entanglement of type $T$} if for some $n\geq 2$ and proper divisors $a < b$, the mod $n$ image of Galois $\,\Im(\rho_{E,n})$ represents a non-trivial $(a,b)$-entanglement of type $T$.
\end{defn}

\begin{remark}
We note that the definitions of horizontal entanglement from Section \ref{sec:intro} and non-trivial $(a,b)$-entanglement coincide. 
\end{remark}


\section{\bf Classification of abelian entanglements}
\label{sec:classificationexplained}
In this work, we are interested in the study and classification of abelian entanglements. 
To this end, we offer the following classification. 
Our guiding principle is that an abelian entanglement will happen when there is some underlying arithmetic reason for it to.  
Below, we will highlight several arithmetic constraints which elucidate the existence of the abelian entanglement. 

\subsection{\bf Abelian and Weil entanglements}
To begin, we define notions of abelian entanglements and Weil-type entanglements.

\begin{defn}\label{defn:abelian-type}
	Let $E/\mQ$ be an elliptic curve, let $a<b$ be integers, let $d=\gcd(a,b)$, and suppose $S$ is a non-trivial finite abelian group. 
	Let $K=\Q(E[a])\cap \Q(E[b])$. We say that $E/\mQ$ has an abelian entanglement, or more precisely, that it has an \cdef{abelian $(a,b)$-entanglement of type $S$} if  $\Gal(K\cap \Q^\text{ab}/\Q(E[d]) \cap \Q^{\text{ab}})\cong S$.
	\end{defn}
	\begin{defn}\label{defn:weil-type}
Let $E/\mQ$ be an elliptic curve, let $a<b$ be integers, let $d=\gcd(a,b)$, and suppose $T$ is a non-trivial finite abelian group. 
Let $K_a=\mQ(E[a])\cap \Q^\text{ab}$, and similarly $K_b=\mQ(E[b])\cap \Q^\text{ab}$. We say that $E/\mQ$ has an entanglement of Weil-type, or more precisely, that it has a \cdef{Weil $(a,b)$-entanglement of type $T$} if 
$\Gal(K_a\cap \Q(\zeta_b)/\mQ(\zeta_d))\cong T$ or $\Gal(K_b\cap \Q(\zeta_a)/\mQ(\zeta_d))\cong T$.
	\end{defn}
	
	\begin{remark}\label{rem:Weilexplained}
	We note that an elliptic curve has a Weil $(a,b)$-entanglement if and only if it has an explained $(a,b)$-entanglement as in \cite[Definition 3.7]{danielsMorrow:Groupentanglements}. 
	Indeed, this follows from the discussion in Remark 4.4 of \textit{loc.~cit.}.
	\end{remark}


For a diagram summarizing the definition of abelian and Weil entanglements, we refer the reader to Figure \ref{fig-Weilentanglement}. 

\begin{figure}
\begin{tikzcd}
	&  & {\mathbb{Q}(E[c])} \arrow[lld, no head] \arrow[dd, no head] \arrow[rrd, no head] & &  \\
	{\mathbb{Q}(E[a])} \arrow[rrd, no head] \arrow[dd, no head] &  &   &  & {\mathbb{Q}(E[b])} \arrow[lld, no head] \arrow[dd, no head] \\
	&  & {K=(\mathbb{Q}(E[a])\cap \mathbb{Q}(E[b]))} \arrow[d, no head] \arrow[rd, no head] & &  \\
	\mathbb{Q}(\zeta_a) \arrow[rrdd, no head] &  & {\mathbb{Q}(E[d])} \arrow[d, no head]  & K\cap \mathbb{Q}^\text{ab} \arrow[d, no head] & \mathbb{Q}(\zeta_b) \arrow[ld, no head]                     \\
	&  &   {\mathbb{Q}(E[d])} \cap \Q^{\text{ab}}\arrow[d, no head] \arrow[ru, "S", no head]  & K\cap \mathbb{Q}(\zeta_b) \arrow[ld, "T", no head] &  \\
	&  & \mathbb{Q}(\zeta_d) \arrow[d, no head]   & &  \\
	&  & \mathbb{Q}  & &                                                            
\end{tikzcd}
\caption{An abelian $(a,b)$-entanglement of type $S$, and a Weil $(a,b)$-entanglement of type $T$, where $c=\lcm(a,b)$ and $d=\gcd(a,b)$.}
\label{fig-Weilentanglement}
\end{figure}
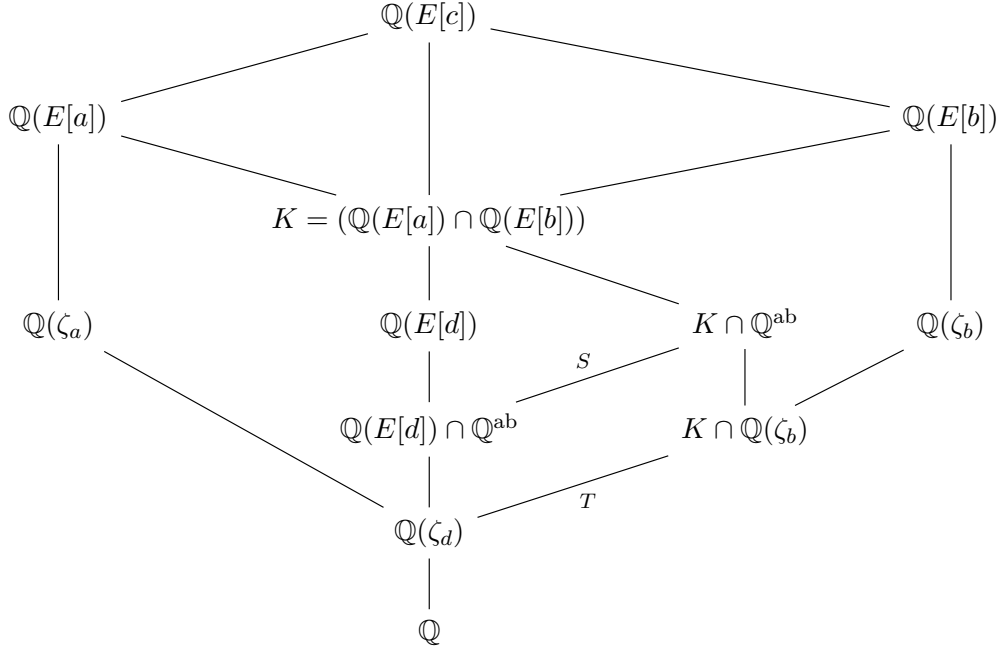

\begin{example}[An abelian entanglement that is not Weil]\label{ex-discriminant-abelian-ent}
This example shows that not all abelian entanglements are of Weil type. For example, let $E/\mQ$ be the elliptic curve with LMFDB label \href{https://www.lmfdb.org/EllipticCurve/Q/448/g/3}{\texttt{448.g3}}. Then, $\Q(E[2])=\Q(\sqrt{2})$ while $\Q(E[3])=\Q(E[6])=\Q(\sqrt{2},\sqrt{-3})$. In particular, $\Q(E[2])\cap \Q(E[3])=\Q(\sqrt{2})$ but $\Q(\sqrt{2})$ is not contained in $\Q(\zeta_2)$ nor $\Q(\zeta_3)$. Thus, here $E$ has an abelian $(2,3)$-entanglement of type $S=\Z/2\Z$ which does not arise as a Weil entanglement. 
\end{example}

\begin{example}[A Weil entanglement that is not abelian]\label{ex-discriminant-weil-ent}
	This example shows that not all Weil-type entanglements are of abelian type as defined in Definition \ref{defn:abelian-type}. For example, let $E/\mQ$ be the elliptic curve with LMFDB label \href{https://www.lmfdb.org/EllipticCurve/Q/21/a/1}{\texttt{21.a1}}. The discriminant of $E$ is $\Delta_E = 3\cdot 7^2$, and therefore $\Q(E[2])$ is entangled with $\Q(\sqrt{3})\subseteq \Q(\zeta_{12})\subseteq \Q(E[12])$ and we have a Weil $(2,12)$-entanglement. Note that $\Q(\sqrt{3})$ is not contained in $\Q(E[3])$ so there is no Weil $(2,3)$-entanglement. However, $d=\gcd(2,12)=2$, and $K\cap \Q^{\text{ab}}/\Q(E[d])\cap \Q^{\text{ab}}$ is the trivial extension $\Q(\sqrt{3})/\Q(\sqrt{3})$, so there is no abelian $(2,12)$-entanglement.
\end{example}

See Example \ref{ex-both-abelian-and-weil} for a family of elliptic curves that have an entanglement that is both of the abelian and Weil-type, simultaneously. 

We note that in the Examples \ref{ex-discriminant-abelian-ent} and \ref{ex-discriminant-weil-ent}, we have that the entanglement field is  $\Q(\sqrt{\Delta_E})$, which is a type of entanglement that we define in the following section in greater generality.

\subsection{\bf Discriminant entanglements}\label{subsec:disc_entangles} A \cdef{discriminant entanglement} is an entanglement between division fields $\Q(E[2])$ and $\Q(E[n])$ such that $\Q(\sqrt{\Delta_E})\subseteq \Q(E[2])\cap \Q(E[n])$. Whenever $\Delta_E$ is not a square in $\QQ^\times$, the field  $\QQ(\sqrt{\Delta_E})$ is the unique quadratic extension of $\QQ$ inside $\QQ(E[2])$. 
 If $n$ is even, $\Q(E[2])\subseteq \Q(E[n])$, so we refine the notion of discriminant entanglement as follows. 

\begin{defn}\label{defn:disc_entang}
	Let $E/\Q$ be an elliptic curve, and let $\Delta_E$ be the discriminant of $E$.
	\begin{enumerate}
		\item If $E/\Q$ has a square discriminant $\Delta_E\in (\QQ^\times)^2$, then $\Q(\sqrt{\Delta_E})/\Q$ is trivial, which means that $\Gal(\Q(E[2])/\Q)$ is either isomorphic to $\Z/3\Z$ or is trivial. In either case, the image of $\rho_{E,2}$ is smaller than $\GL(2,\Z/2\Z)$ and therefore $E/\Q$ has a vertical collapsing (see Section \ref{sec:intro}). We call this entanglement a \cdef{vertical discriminant collapsing}.
		\item If $\Delta_E$ is not a square and $n \geq 3$, then we let $m = \lcm(2,n)$, and $G_m = \Im (\rho_{E,m})$. If there exists a group $\mathcal{G} \subseteq \GL(2,\ZZ/m\ZZ)$ such that $G_m\subseteq \mathcal{G}$ is of index $2$, and there two index-$2$ subgroups $N_2$ and $N_n$ of $\mathcal{G}$ with the following properties:
\begin{itemize}
\item $N_2$ is the index-$2$ normal subgroup of $\mathcal{G}$ corresponding to $\Q( \sqrt{\Delta_E})$, in other words, $N_2$ is the unique index-$2$ normal subgroup of $\mathcal{G}$ such that $[\pi_2(\mathcal{G}):\pi_2(N_2)]=2$ as subgroups of $\GL(2,\Z/2\Z)$, where $\pi_2\colon \GL(2,\Z/m\Z)\to \GL(2,\Z/2\Z)$ is the natural reduction map, 
\item $N_2 \neq N_n$, and
\item $G_m \cap N_2 = G_m \cap N_n$,
\end{itemize}
then we say that $E$ has a \cdef{$(2,n)$-discriminant entanglement.}	
\end{enumerate}
\end{defn}

Before proceeding we make a remark and provide a few examples to illuminate Definition \ref{defn:disc_entang}, in particular Part (2). 

\begin{remark}
We now describe the idea behind part (2) of Definition \ref{defn:disc_entang}. 
With $\mathcal{G}$ as in part (2), the rational points on the modular curve $X_\mathcal{G}$ correspond to elliptic curves whose $m$-division fields have two \emph{distinct} quadratic fields. One of these quadratic fields is $\Q(\sqrt{\Delta_E})$ (the fixed field of $N_2$) and the other one is the fixed field of $N_n$, which is distinct from $\Q(\sqrt{\Delta_E})$ since $N_2 \neq N_n$ and quadratic since $[\mathcal{G}:N_n] =2$. 
The group $G_m$ is contained in this group $\mathcal{G}$ and the group $G_m$ the can be constructed by gluing $N_2$ and $N_n$ together.
At the level of fields, this corresponds to the fixed field of $N_2$ and $N_n$ coinciding. 
We note that it is not required for $\mathcal{G}$ to be an admissible group containing $-I$,  however we restrict to this case in order to describe the ideas precisely.  
\end{remark}

\begin{example}\label{ex:disc_ent_1}
Let $E$ be the elliptic curve with LMFDB label \texttt{35.a2}. Using \cite{RouseSZB:elladic} and \cite{lmfdb} we see that $\Im (\rho_{E,3^\infty})$ is exactly the group \texttt{3B.1.1} lifted to $\GL(2,\Z_3)$, and $\rho_{E,\ell^\infty}$ is surjective onto $\GL(2,\Z_\ell)$ otherwise.
Let $G_6$ be the mod-$6$ full preimage of the group \texttt{3B.1.1} in $\GL(2,\Z/6\Z)$ and let $\mathcal{G}$ be the group corresponding to the the preimage of \texttt{3B} but again inside of $\GL(2,\Z/6\Z)$. We note that $[\mathcal{G}:G_6]=2$ in $\GL(2,\Z/6\Z)$.

The normal subgroup 
$$N_2 = \left\langle
\begin{pmatrix}
2&5\\3&1
\end{pmatrix},
\begin{pmatrix}
2&3\\3&5
\end{pmatrix},
\begin{pmatrix}
2&3\\3&1
\end{pmatrix}
\right\rangle,$$
of $\mathcal{G}$ corresponds to the field $\Q(\sqrt{\Delta_E}) = \Q(\sqrt{-35})$. Next let
$$N_n := \left\langle
\begin{pmatrix}
2&1\\3&2
\end{pmatrix},
\begin{pmatrix}
4&5\\3&5
\end{pmatrix}
\right\rangle.$$
With this, one can verify that 
\begin{itemize}
\item $N_n \neq N_2$ and
\item $N_n \cap G_6 = N_2 \cap G_6$,
\end{itemize}
and hence we see that $E$ has a $(2,6)$-discriminant entanglement. We remark here that this is neither a Weil entanglement, as $\Q(\sqrt{-35})$ is not contained in $\Q(\zeta_6)$, nor an abelian entanglement (in the sense of Definition \ref{defn:abelian-type}) because $\lcm(2,6)=2$ and $\Q(E[2])\cap \Q(E[6]) \cap \Q^{\text{ab}}/(\Q(E[2])\cap \Q^{\text{ab}})$ is the trivial extension $\Q(\sqrt{-35})/\Q(\sqrt{-35})$.
\end{example}

\begin{remark}
We point out that any elliptic curve $E'/\Q$ whose mod-$6$ Galois representation has image conjugate to $\mathcal{G}\subsetneq \GL(2,\Z/6\Z)$ has a point $P$ of order 3 defined over $\Q$ or a quadratic field, call it $\Q(\sqrt{d})$. A group theoretic computation shows that $N_n$ is precisely the subgroup that fixes the subfield $\Q(\sqrt{d\Delta_{E'}})\subseteq \Q(E'[6])$ and $N_2$ fixes $\Q(\sqrt{\Delta_{E'}})$. 

Thus, if $E$ is an elliptic curve whose mod-$6$ Galois representation has image conjugate to $G_6\subsetneq\mathcal{G}\subsetneq \GL(2,\Z/6\Z)$, such as $E$ in Example \ref{ex:disc_ent_1},
then the entanglement we see is exactly $\Q(\sqrt{\Delta_{E'}}) = \Q(\sqrt{d\Delta_{E'}}).$ 
Notice that this would mean $$\Delta_{E'} \equiv d \Delta_{E'} \bmod (\QQ^\times)^2,$$ which forces $d \equiv 1$ modulo squares. 
That is, $d$ is a square, and so $P \in E(\Q)$. 
\end{remark}

Motivated by Example \ref{ex:disc_ent_1} and the previous remark, we provide one more example to show that part (1) of Definition \ref{defn:disc_entang} can be viewed in a similar light as in the case of part (2). 

\begin{example}
Let $E/\Q$ be the elliptic curve with Cremona label \texttt{392f1.}	 In this case the discriminant of $E$ is $\Delta_E = 784 = 2^4\cdot 7^2$ and $\rho_{E,\ell^\infty}$ is surjective everywhere except for $\ell=2$ where the image   is the preimage of \texttt{2Cn} in $\GL(2,\Z_\ell)$. Note that \texttt{2Cn} is the subgroup of $\GL(2,\Z/2\Z)$ of order 3. 
In this case, since $\Delta_E$ is a square, the normal subgroup $N_2$ does not exist (the subgroup fixing $\Q(\sqrt{\Delta_E})=\Q$ is the entire group, so not of index $2$), but we can still give a group-theoretic description of the entanglement as follows.
To see this, let $G_6 = \Im (\rho_{E,6})$ and $\mathcal{G} = \GL(2,\Z/6\Z)$. 
Then, if we let

$$N_a := \left\langle
\begin{pmatrix}
	5&1\\4&3
\end{pmatrix},
\begin{pmatrix}
	2&5\\5&1
\end{pmatrix},	
\begin{pmatrix}
	2&1\\1&1
\end{pmatrix},	
\begin{pmatrix}
    2&5\\5&0
\end{pmatrix}
\right\rangle$$
and
$$
N_b := \left\langle
\begin{pmatrix}2&1\\5&0\end{pmatrix},
\begin{pmatrix}2&5\\5&1\end{pmatrix},
\begin{pmatrix}2&1\\1&1\end{pmatrix}
\right\rangle$$
then the subgroup $N_a$ corresponds to $\Q(\sqrt{-3})\subseteq \Q(\zeta_3)$ while $N_b$ corresponds to $\Q(\sqrt{-3\Delta_E})$.
In $G_6$, we have that
$G_6 \cap N_a = G_6 \cap N_b$.

The group $N_a \cap G_6$ is exactly the normal subgroup of $G_6$ that fixes $\Q(\sqrt{-3})$. In other words, the group $G_6$ is characterized by the coincidence of the fields $\Q(\sqrt{-3})$ and $\Q(\sqrt{-3\Delta_E})$. This is the same as $\Q(\sqrt{\Delta_E}) = \Q.$ So, in this sense parts (1) and (2) are compatible. 

Before moving on, we also point out that there is nothing particularly special about $p=3$. We could do the same thing for any odd prime by letting $N_a$ be the normal subgroup that fixes $$\QQ\left(\sqrt{p^*}\right)\subseteq\QQ(\zeta_p)\subseteq\QQ(E[p])$$ where $p^* =(-1)^{\frac{p-1}{4}}p$. Then, in this case $p^*\Delta_E\equiv p^* \bmod (\QQ^\times)^2$ forces $\Delta_E \equiv 1 \bmod (\QQ^\times)^2.$
\end{example}

The most common of discriminant entanglements was described by Serre, and we define it next.

\subsubsection{\bf The Serre (or discriminant) entanglement}\label{subsec:serre_entangles} 
The \cdef{Serre entanglement} is a discriminant entanglement (as in Definition \ref{defn:disc_entang}) that was first described in  \cite[Prop. 22]{serre:OpenImageThm}. 

Assume that $\Delta_E$ is not a square in $\QQ^\times$. Then,  $\QQ(\sqrt{\Delta_E})$ is the unique quadratic extension of $\QQ$ inside $\QQ(E[2])$.  Now, since $\QQ(\sqrt{\Delta_E})/\QQ$ is an abelian extension we know that there exists an integer $n\geq 3$ in such that $\QQ(\sqrt{\Delta_E}) \subseteq \QQ(\zeta_n)$ by the Kronecker--Weber theorem (e.g., $n=4|\Delta_E|$ works). 
The Weil pairing implies that $\QQ(\zeta_n)\subseteq\QQ(E[n])$, and thus $\QQ(E[2]) \cap \QQ(\zeta_n) \simeq \QQ(\sqrt{\Delta_E})$.  It follows that $\QQ(E[2])\cap \QQ(\zeta_n)$ is a non-trivial quadratic extension of $\QQ$, and therefore $\Gal(\QQ(E[2],\zeta_n)/\QQ)$ is a subgroup of index $2$ of $\Gal(\QQ(E[2])/\QQ)\times \Gal(\QQ(\zeta_n)/\QQ).$ This coincidence causes an entanglement between $\Q(E[2])$ and $\Q(E[n])$ which we call a Serre entanglement. In the notation of Definition \ref{defn:weil-type}, we have $K=\QQ(E[2])\cap \QQ(E[n])$ and $K\cap \QQ^\text{ab}=K\cap \Q(\zeta_n)=\Q(\sqrt{\Delta_E})$ which is a non-trivial quadratic extension of $\Q(\mu_2)=\Q$. Therefore, $E/\Q$ has a Weil $(2,4|\Delta_E|)$-entanglement of type $\Z/2\Z$, which we call a \cdef{Serre entanglement}. See Example \ref{ex-discriminant-weil-ent} for an instance of this phenomenon. 

\begin{remark}
	A Serre entanglement is a discriminant entanglement as in Definition \ref{defn:disc_entang}, part (2). Indeed, let $n\geq 3$ be as above such that it is minimal with the property $\Q(\sqrt{\Delta_E})\subseteq \Q(\zeta_n)$ and let $m=\lcm(2,n)$. For an integer $k\geq 2$, let $G_k = \Im(\rho_{E,k})$, and write $m=2^r\cdot s$ where $r,s\geq 1$ and $s$ is odd. Let $\mathcal{G}$ be the subgroup of $\GL(2,\Z/m\Z)\cong \GL(2,\Z/2^r\Z)\times \GL(2,\Z/s\Z)$ that isomorphic to $G_{2^r}\times G_s$. Then, we say there is a Serre entanglement if $G_m$ is a subgroup of $\mathcal{G}$ of index $2$, and there are index-$2$ normal subgroups $N_2$ and $N_n$ of $\mathcal{G}$ such that
	\begin{enumerate}
		\item $N_2$ corresponds to $\Q(\sqrt{\Delta_E})\subseteq \Q(E[2])$,
		\item $N_n$ corresponds to a quadratic subfield of $\Q(\zeta_n)\subseteq\Q(E[n])$, with $N_n\neq N_2$, and
		\item $N_2\cap G_m = N_n \cap G_m$.
	\end{enumerate}
\end{remark}
\begin{example} For instance, let $E/\Q$ be the elliptic curve with LMFDB label \texttt{21.a1}, and discriminant $\Delta_{E} = 3\cdot 7^2$. Hence, $\Q(\sqrt{\Delta_E})=\Q(\sqrt{3})\subseteq \Q(\zeta_3,i)=\Q(\zeta_{12})$, and therefore there is a $(2,12)$-Serre entanglement. In this case, $n=m=12$, and  $\mathcal{G}\subseteq \GL(2,\Z/12\Z)$ is isomorphic to $G_4\times G_3$, where $G_k = \Im(\rho_{E,k})$ as before. Finally, $N_2$, $N_n$, and $G_m$ are generated as follows, as subgroups of $\GL(2,\Z/12\Z)$: 
	$$N_2 =\left\langle 
		\begin{pmatrix}
	9&2\\2&1
	\end{pmatrix},
	\begin{pmatrix}
1&8\\4&7
\end{pmatrix},
	\begin{pmatrix}
1&4\\2&1
\end{pmatrix},
	\begin{pmatrix}
3&2\\8&7
\end{pmatrix},
	\begin{pmatrix}
5&2\\4&5
\end{pmatrix}
	\right\rangle,$$
	
$$N_n = \left\langle
	\begin{pmatrix}
5&5\\10&3
\end{pmatrix},
	\begin{pmatrix}
5&8\\2&3
\end{pmatrix},
	\begin{pmatrix}
11&7\\8&3
\end{pmatrix},
	\begin{pmatrix}
3&1\\8&7
\end{pmatrix}
\right\rangle,$$

$$G_m = \left\langle
	\begin{pmatrix}
7&9\\2&7
\end{pmatrix},
	\begin{pmatrix}
5&7\\4&9
\end{pmatrix},
	\begin{pmatrix}
3&1\\10&1
\end{pmatrix},
	\begin{pmatrix}
5&11\\10&9
\end{pmatrix}
\right\rangle.$$
One can verify that $N_2$ and $N_n$ are index-$2$ subgroups of $\mathcal{G}$, that $N_2\neq N_n$, and $N_2\cap G_m = N_n\cap G_m$. It is worth pointing out that $N_n$ fixes $\Q(\sqrt{3})\subseteq \Q(\zeta_{12})$ because $\det(N_n)=\{1,11\bmod 12\}$, and on the other hand, $\zeta_{12}+\zeta_{12}^{11} = \sqrt{3}$. Moreover, $\zeta_{12}+\zeta_{12}^{11}$ is invariant under the automorphism that sends $\zeta_{12}\mapsto \zeta_{12}^{11}$, and therefore $\Q(\sqrt{3})$ must be fixed by $N_{n}$, as desired.
\end{example}

If $E/\Q$ has CM, and $j\neq 1728$, then $\Delta_E$ is not a perfect square (see \cite[Appendix A, \S 3]{silverman2}; the discriminant changes by a square after a quadratic or cubic twist) and the same argument as in the first paragraph in this section shows that $E/\Q$ has a  Serre entanglement. If $j_E=1728$, then $E$ is given by a model $y^2=x^3-dx$, where $d$ is a non-zero fourth-power-free integer, and $\Q(E[2])=\Q(\sqrt{d})$. If $d$ is not a square (in other words, if $E$ is not a quadratic twist of $y^2=x^3-x$), then we once again have a Serre entanglement. If $d$ is a square, however, then $\Q(E[2])=\Q$ and there is no Serre entanglement involving the $2$-torsion. However, the $2$-adic image is ``small'' even for a CM curve i.e., there is a vertical collapsing, which we will discuss below (see Example \ref{exam:CMexample}). 

To summarize, we have shown the following result.

\begin{theorem}
	Let $E/\Q$ be an elliptic curve. 
		\begin{enumerate}
			\item If $\Delta_{E}$ is not a perfect square in $\Q^\times$, then $E/\Q$ has a Serre entanglement. 
			\item If $\Delta_E\in (\QQ^\times)^2$, then $E$ has a vertical collapsing.
		\end{enumerate} 
\end{theorem}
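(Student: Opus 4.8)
The plan is essentially to record the argument already carried out in the discussion preceding the statement, organized as a case split on whether $\Delta_E$ is a square in $\Q^\times$; no new ingredient is needed. For part (1), I would assume $\Delta_E \notin (\Q^\times)^2$. First recall the standard fact (see \cite{Adelmann:Decomp}) that $\Q(\sqrt{\Delta_E})$ is the fixed field of the kernel of the sign character $\Gal(\Q(E[2])/\Q) \hookrightarrow S_3 \to \{\pm 1\}$; since $\Delta_E$ is not a square this is a genuine quadratic field, and it is precisely the maximal abelian subextension $K_2 := \Q(E[2]) \cap \Q^{\text{ab}}$ (whether $\Im(\rho_{E,2})$ equals $\Z/2\Z$ or all of $S_3$, the abelianization of the Galois group cuts out exactly $\Q(\sqrt{\Delta_E})$). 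Next, by Kronecker--Weber choose $n$ with $\Q(\sqrt{\Delta_E}) \subseteq \Q(\zeta_n)$, noting that $n = 4|\Delta_E|$ works, and recall from the Weil pairing that $\Q(\zeta_n) \subseteq \Q(E[n])$. Taking $a = 2$, $b = 4|\Delta_E|$ and $d = \gcd(a,b) = 2$ in Definition \ref{defn:weil-type}, we get $K_2 \cap \Q(\zeta_b) = \Q(\sqrt{\Delta_E})$ while $\Q(\zeta_d) = \Q$, so $\Gal(K_2 \cap \Q(\zeta_b)/\Q(\zeta_d)) \cong \Z/2\Z$; this is exactly a Weil $(2,4|\Delta_E|)$-entanglement of type $\Z/2\Z$, i.e.\ a Serre entanglement.

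For part (2), I would assume $\Delta_E \in (\Q^\times)^2$, so that $\Q(\sqrt{\Delta_E}) = \Q$. Then the image $H$ of $\rho_{E,2}$ in $\GL(2,\Z/2\Z) \cong S_3$ lies in the kernel of the sign character, i.e.\ $H \subseteq A_3 \cong \Z/3\Z$; in particular $H \neq S_3$, so $\rho_{E,2}$ is not surjective. Since $\rho_{E,2}$ factors as $\rho_{E,2^\infty}$ followed by reduction modulo $2$, the $2$-adic representation $\rho_{E,2^\infty}$ cannot be surjective either, which is by definition a vertical tanglement at $\ell = 2$ (Section \ref{sec:intro}).

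I do not anticipate a genuine obstacle here, as the whole argument is bookkeeping against the definitions; the one spot that warrants an explicit sentence rather than a one-word appeal is the identification $\Q(E[2]) \cap \Q^{\text{ab}} = \Q(\sqrt{\Delta_E})$ in part (1) when $\Im(\rho_{E,2}) = S_3$, where one must observe that the maximal abelian subextension is the fixed field of the commutator subgroup $A_3$ of $S_3$, which coincides with $\Q(\sqrt{\Delta_E})$ --- together with the remark that $\Delta_E \notin (\Q^\times)^2$ forces $\Im(\rho_{E,2})$ to have even order, so that the quadratic subfield genuinely exists. Once this is in place, the chain $\Q(\sqrt{\Delta_E}) \subseteq \Q(\zeta_{4|\Delta_E|}) \subseteq \Q(E[4|\Delta_E|])$ and the resulting type computation are immediate.
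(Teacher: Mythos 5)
Part (1) of your proposal is correct and is essentially the paper's own argument: identify $\Q(E[2])\cap\Q^{\text{ab}}=\Q(\sqrt{\Delta_E})$ (handling both possible even-order images in $\GL(2,\Z/2\Z)\cong S_3$), embed it in $\Q(\zeta_{4|\Delta_E|})$ via Kronecker--Weber, use the Weil pairing, and read off the type from Definition \ref{defn:weil-type} with $a=2$, $b=4|\Delta_E|$, $d=2$.

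In part (2), however, there is a small but genuine gap: your deduction ``$\rho_{E,2}$ not surjective $\Rightarrow$ $\rho_{E,2^\infty}$ not surjective $\Rightarrow$ vertical tanglement by the definition in Section \ref{sec:intro}'' only invokes the non-CM definition, and the paper explicitly restricts that definition to non-CM curves, deferring the CM case to Subsection \ref{sec-CM-entanglements}. For a CM curve the $2$-adic image is never surjective onto $\GL(2,\Z_2)$, so non-surjectivity by itself is not a tanglement; a vertical CM tanglement means the image is strictly smaller than (a conjugate of) $\mathcal{N}_{\delta,\phi}(2^\infty)$ (Definition \ref{defn-CMverticalentanglement}). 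The CM case does arise under your hypothesis: a CM curve with square discriminant necessarily has $j=1728$ and a model $y^2=x^3-dx$ with $d$ a square (CM curves with $j\neq 1728$ never have square discriminant), and then $\Q(E[2])=\Q$. The paper closes this case in Example \ref{exam:CMexample}: the mod $2$ reduction of $\mathcal{N}_{-1,0}(2^\infty)$ has order $2$, so a trivial mod $2$ image forces $\Im(\rho_{E,2^\infty})$ to have index at least $2$ in $\mathcal{N}_{-1,0}(2^\infty)$, which is a vertical CM tanglement. Adding this one observation (or explicitly splitting part (2) into the non-CM and CM cases as the paper does) makes your proof complete; everything else matches the paper's route.
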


\begin{remark}\label{rem-deltaclass} If $d\in\QQ^\times$, then the elliptic curves over $\mQ$ with $\Delta_E \equiv d \mod (\QQ^\times)^2$ are precisely those curves $E/\QQ$ with $j$-invariant of the form $j(E) = dt^2+1728$ for some $t \in \QQ^\times$ (or $j(E)=1728$ if $d\equiv 3 \bmod (\QQ^\times)^2 $). Indeed, the discriminant of the universal elliptic curve
	$$E_{j_0}/\QQ : y^2+xy = x^3-\frac{36}{j_0-1728}x -\frac{1}{j_0-1728}$$
	with $j(E_{j_0})=j_0$ is given by $\Delta_{E_{j_0}} = \frac{j_0^2}{(j_0-1728)^3}$. Hence, $\Delta_{E_{j_0}}\equiv j_0-1728 \bmod (\QQ^\times)^2$ as long as $j_0\neq 0$ or $1728$. Moreover, if $E'/\QQ$ is any other elliptic curve with $j(E)=j_0$, then $\Delta_{E_{j_0}}$ and $\Delta_{E'}$ differ by a square in $\QQ^\times$. 
\end{remark} 
\begin{prop}\label{prop:infinte_Serre}
For a fixed quadratic number field $K$, there are infinitely many $\overline{\QQ}$-isomorphism classes of elliptic curves $E/\Q$ with a Serre entanglement such that $\Q(\sqrt{\Delta_E})=K$.
\end{prop}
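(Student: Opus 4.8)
The plan is to use Remark \ref{rem-deltaclass}, which characterizes elliptic curves over $\Q$ with a prescribed discriminant class in $\Q^\times/(\Q^\times)^2$ in terms of a one-parameter family of $j$-invariants. Fix the quadratic field $K = \Q(\sqrt{d})$ for a squarefree integer $d \neq 1$. First I would handle the generic case $d \not\equiv 3 \bmod (\Q^\times)^2$, i.e., $K \neq \Q(\sqrt{-3})$: by Remark \ref{rem-deltaclass}, for every $t \in \Q^\times$ the elliptic curve $E_t/\Q$ with $j$-invariant $j(E_t) = d t^2 + 1728$ (realized, say, as the twist of the universal curve $E_{j_0}$ with $j_0 = dt^2 + 1728$ making the discriminant exactly squarefree-equivalent to $d$) satisfies $\Delta_{E_t} \equiv d \bmod (\Q^\times)^2$, hence $\Q(\sqrt{\Delta_{E_t}}) = \Q(\sqrt{d}) = K$; in particular $\Delta_{E_t}$ is not a perfect square, so by part (1) of the preceding Theorem each such $E_t$ has a Serre entanglement with $\Q(\sqrt{\Delta_{E_t}}) = K$.

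Next I would verify that this construction produces infinitely many $\overline{\Q}$-isomorphism classes. Two elliptic curves are $\overline{\Q}$-isomorphic if and only if they have the same $j$-invariant, so it suffices to observe that the map $t \mapsto d t^2 + 1728$ from $\Q^\times$ to $\Q$ takes infinitely many values (it is a non-constant polynomial in $t$, so its fibers are finite). Thus $\{ j(E_t) : t \in \Q^\times \}$ is infinite, giving infinitely many distinct $\overline{\Q}$-isomorphism classes among the $E_t$, all with a Serre entanglement and all with $\Q(\sqrt{\Delta_{E_t}}) = K$. For the remaining case $K = \Q(\sqrt{-3})$, one can either take $j_0 = 1728$ (allowed by the parenthetical in Remark \ref{rem-deltaclass}, though this gives a single $j$-invariant, so instead) use the quadratic twists $E^{(-3)}$ of any infinite family of curves with square discriminant, or more simply note that quadratic twisting a fixed curve $E_0/\Q$ by squarefree integers $m$ coprime to its conductor changes $\Delta$ by $m^6$ times... — actually the cleanest fix: pick any curve $E_0$ with $\Q(\sqrt{\Delta_{E_0}}) = \Q(\sqrt{-3})$ and twist by squarefree $m > 0$ with $m \equiv 1 \bmod (\Q^\times)^2$ being impossible, so instead run the Remark-\ref{rem-deltaclass} argument with a different sign normalization, noting that $\Delta_{E^{(m)}} = m^6 \Delta_E$ so the discriminant class is unchanged under \emph{any} quadratic twist — hence taking the infinitely many twists $E_0^{(m)}$ by distinct squarefree $m$ coprime to the conductor of $E_0$ gives infinitely many $\overline{\Q}$-isomorphism classes (twisting changes the $j$-invariant? no — twisting preserves $j$).

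I realize the twisting approach fails because quadratic twists share a $j$-invariant, so they are all $\overline{\Q}$-isomorphic; the polynomial-family approach via Remark \ref{rem-deltaclass} is the right one and already covers every $K$ once one allows arbitrary models. Concretely: for $K = \Q(\sqrt{d})$ with $d \equiv 3 \bmod (\Q^\times)^2$, one still uses $j(E_t) = d t^2 + 1728$ — the parenthetical ``$j(E) = 1728$'' in Remark \ref{rem-deltaclass} is an \emph{additional} possibility, not a replacement, so the one-parameter family $d t^2 + 1728$ is available for this $d$ as well, and the discriminant of the chosen model in that class is $\equiv d$, hence non-square. So the uniform statement: for every squarefree $d \neq 1$, the curves $E_t$ ($t \in \Q^\times$) with $j(E_t) = d t^2 + 1728$ and discriminant exactly in the class of $d$ form an infinite family of pairwise non-$\overline{\Q}$-isomorphic curves, each with a Serre entanglement and each satisfying $\Q(\sqrt{\Delta_{E_t}}) = K$. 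The main obstacle is purely bookkeeping: from a prescribed $j_0$ one must exhibit a specific model of an elliptic curve whose discriminant lands \emph{exactly} in the class of $d$ rather than merely in the class of $j_0 - 1728 \equiv d$; this is done by an appropriate quadratic twist of $E_{j_0}$, using $\Delta_{E^{(u)}} = u^6 \Delta_E$ — wait, that doesn't change the class — so in fact no twist is needed at all: Remark \ref{rem-deltaclass} already gives $\Delta_{E_{j_0}} \equiv j_0 - 1728 = d t^2 \equiv d \bmod (\Q^\times)^2$ directly, so $E_{j_0}$ itself works and the proof is immediate modulo citing Remark \ref{rem-deltaclass} and the fact that $t \mapsto dt^2+1728$ has infinite image.
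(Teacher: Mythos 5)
Your final, distilled argument is correct and is essentially the paper's own proof: cite Remark \ref{rem-deltaclass} to get $\Delta_{E}\equiv d \bmod (\QQ^\times)^2$ for every curve with $j(E)=dt^2+1728$, conclude $\QQ(\sqrt{\Delta_E})=K$ with $\Delta_E$ a non-square so the preceding theorem gives a Serre entanglement, and note that $t\mapsto dt^2+1728$ has infinite image so the family meets infinitely many $\overline{\QQ}$-isomorphism classes. The detours about quadratic twisting were rightly discarded (twists preserve $j$ and the discriminant class), and the version you settle on needs no twist at all, exactly as in the paper.
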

\begin{proof}
	Let $K$ be a fixed quadratic field. Then, there is a square-free integer $d$ such that $K=\Q(\sqrt{d})$. It follows from Remark \ref{rem-deltaclass} that any elliptic curve $E/\Q$ with $j(t)=dt^2+1728$, for any $t\in\Q^\times$, has a Serre entanglement with $\Q(\sqrt{\Delta_E})=\Q(\sqrt{d})=K$, as desired.
\end{proof}

\begin{example}\label{ex-both-abelian-and-weil}
	Let $p>2$ be a prime, and let $\hat{p}=(-1)^{\frac{p-1}{2}}\cdot p$. For any $t\in \QQ^\times$, let $$\mathcal{E}_{p,t}\colon y^2+\hat{p}txy = x^3-36\hat{p}^3t^2x -\hat{p}^5t^4.$$ Then, $j(\mathcal{E}_{p,t})=\hat{p}t^2+1728$ and therefore (by our comments in Remark \ref{rem-deltaclass}) we have $\Delta_{\mathcal{E}_{p,t}}\equiv (-1)^{(p-1)/2}p \bmod (\QQ^\times)^2$. In particular, $\Q(\sqrt{\hat{p}})$ is the unique quadratic subfield of $\Q(\mathcal{E}_{p,t}[2])$. Since $\Q(\sqrt{\hat{p}})\subseteq \Q(\zeta_p)$, it follows that $\Q(\sqrt{\hat{p}})\subseteq \Q(\mathcal{E}_{p,t}[2])\cap \Q(\mathcal{E}_{p,t}[p])$ and an entanglement of Serre type ensues. Moreover, any elliptic curve $E'/\QQ$ with a Serre entanglement between $\Q(E'[2])$ and $\Q(E'[p])$ is a quadratic twist of a curve in the family $\mathcal{E}_{p,t}$. We also remark here that the entanglements described in this family are both of abelian and Weil-type, since $d=\gcd(2,p)=1$.
	\end{example} 

\subsubsection{{\bf Other discriminant $(2,q)$-entanglements}}\label{sec:other-disc-ent} Let $q$ be a prime such that $E/\Q$ has a $(2,q)$-discriminant entanglement, that is, $\Q(\sqrt{\Delta_E})\subseteq \Q(E[2])\cap \Q(E[q])$. Since $\Q(\sqrt{\Delta_E})$ (when non-trivial) is the unique quadratic subfield of $\Q(E[2])$, it follows that any abelian $(2,q)$-entanglement of type $\Z/2\Z$ is a discriminant entanglement. In \cite{danielsMorrow:Groupentanglements}, Daniels and Morrow have classified all the $(p,q)$-entanglements of type $S$, for some finite group $S$, such that there are infinitely non-$\overline{\Q}$-isomorphic elliptic curves over $\Q$ with such an entanglement. In particular, when $p=2$, they show that the  possible values for $q$ are $q\in \{3,5,7,13\}$. Thus, it remains to study discriminant $(2,q)$-entanglements with $q\not\in \{3,5,7,13\}$, that are not of Serre type.

\begin{lemma}\label{lem-borel_disc_entanglement}
	Suppose $E/\Q$ has no CM, and it has a discriminant $(2,q)$-entanglement where $\rho_{E,q}$ is contained in a Borel subgroup, for $q\not\in \{2,3,5,7,13\}$. Then, $q=11$, and $j=-11^2$ or $-11\cdot 131^3$. 
\end{lemma} 
\begin{proof} 
Suppose $E/\Q$ has no CM and an isogeny of degree $q\not\in \{2,3,5,7,13\}$, and that $E$ has a discriminant entanglement. By the classification of rational isogenies, we have $q\in \{11,17,37\}$ and the $j$-invariant of $E$ is one of finitely many, as given in \cite{lozano2013field}. Since quadratic twisting preserves the field of $x$-coordinates (Lemma 9.6 of \cite{lozano2013field}), if the quadratic fields contained in $\Q(E[q])$ are contained in $\Q(x(E[q]))$, then either all twists of $E$ have a discriminant entanglement or none have it. On the contrary, if the kernel of the $q$-isogeny is generated by $Q\in E[q]$ and $\Q(x(Q))$ is of odd degree, then there is a quadratic twist of $E$ with a discriminant entanglement. By Theorem 9.4 of \cite{lozano2013field}, for $p=17$ and $37$, the degree of $\Q(x(Q))$ is even, and therefore all the quadratic subfields  of $\Q(E[q])$ are contained in $\Q(x(E[q]))$, and it suffices to check one twist for each $j$-invariant. One can check that \texttt{14450.b1} and \texttt{14450.b2}, and \texttt{1225.b1} and \texttt{1225.b2} do not have discriminant entanglements. Therefore there are no discriminant entanglements of type $(2,17)$ and $(2,37)$.

However, for $p=11$, the degree of $\Q(x(Q))$, where $Q$ generates the kernel of an $11$-isogeny, is odd (degree $5$) and therefore there is a twist of $E$ with a discriminant entanglement. For instance, the elliptic curves \texttt{1936.a1} and \texttt{1936.b1} have discriminant entanglements.
\end{proof}

 We will come back to the other cases according to the image of $\rho_{E,q}$ when we prove Theorem \ref{thmx:main0} in Section \ref{sec-other}.

\subsection{\bf CM entanglements} \label{sec-CM-entanglements}
Next, we describe entanglements in the CM case. We note here that by \cite[Lemma 3.15]{BCSTorPointsOnCM}, if $E/\mQ$ is an elliptic curve with complex multiplication by an order $\mathcal{O}_{f,K}$ of an imaginary quadratic field $K$, and $n\geq 3$, then $K\subseteq \Q(E[n])$. Hence, for any $a,b\geq 3$, we at least have $K\subseteq \mQ(E[a])\cap \mQ(E[b])$ which results in a Weil entanglement of type $T$ with $\Z/2\Z\subseteq T$.

Therefore, the presence of CM ensures that we have an entanglement, and we record this phenomena below. 

\begin{defn}\label{defn:CM-entanglement}
Let $E/\mQ$ be an elliptic curve with complex multiplication by the order $\mathcal{O}_{f,K}$ with $K/\mQ$ an imaginary quadratic extension. 
We say that an elliptic curve $E/\mQ$ has a \cdef{CM  $(a,b)$-entanglement of type $\zZ{2}$} if $K \subseteq \mQ(E[a])\cap \mQ(E[b])$. 
\end{defn}

In addition to the CM entanglement, elliptic curves with CM may have other vertical collapsing and horizontal entanglement that we shall describe below. First, we need to describe the image of the Galois representations attached to elliptic curves with CM.

\begin{defn}\label{defn-CMimage} 
Let $K$ be an imaginary quadratic field with discriminant $\Delta_K$ and let $\OO_{K,f}$ be the order of $K$ of conductor $f\geq 1$.
	Let $E/\Q$ be an elliptic curve with CM by $\OO_{K,f}$, let $n\geq 3$, and let $\rho_{E,n}$ be the mod $n$ image of Galois.  
	
We define groups of $\GL(2,\Z/n\Z)$ as follows:
	\begin{itemize}
		\item If $\Delta_Kf^2\equiv 0\bmod 4$,  let $\delta=\Delta_K f^2/4$, and $\phi=0$.
		\item If $\Delta_Kf^2\equiv 1 \bmod 4$, let $\delta=\frac{(\Delta_K-1)}{4}f^2$, let $\phi=f$.
	\end{itemize}
The \cdef{Cartan subgroup} $\cC_{\delta,\phi}(n)$ of $\GL(2,\Z/n\Z)$ is defined by
	$$\cC_{\delta,\phi}(n)=\left\{\left(\begin{array}{cc}
	a+b\phi & b\\
	\delta b & a\\
	\end{array}\right): a,b\in\Z/n\Z,\  a^2+ab\phi-\delta b^2 \in (\Z/n\Z)^\times \right\},$$
	and we also define
	\[
	\mathcal{N}_{\delta,\phi}(n) = \left\langle \cC_{\delta,\phi}(n),\left(\begin{array}{cc} -1 & 0\\ \phi & 1\\\end{array}\right)\right\rangle.
	\] Finally, let
	$$\mathcal{N}_{\delta,\phi}(p^\infty)=\varprojlim \mathcal{N}_{\delta,\phi}(p^n) \quad  \text{ and } \quad \mathcal{N}_{\delta,\phi}(\widehat{\Z})=\varprojlim \mathcal{N}_{\delta,\phi}(n).$$
\end{defn} 

Now we are ready to define a vertical CM tanglements and horizontal CM entanglements in terms of the groups of Definition \ref{defn-CMimage}.
\begin{defn}\label{defn-CMverticalentanglement} 	
	Let $E/\Q$ be an elliptic curve with CM by an order of an imaginary quadratic field $K$, and let $\mathcal{N}_{\delta,\phi}(\cdot)$ be the groups of Definition \ref{defn-CMimage}.
	\begin{enumerate}
		\item We say that $E/\Q$ has a \cdef{vertical CM collapsing} if $\Im (\rho_{E,p^\infty})$ is contained but not equal to a conjugate subgroup of $\mathcal{N}_{\delta,\phi}(p^\infty)$ in $\GL(2,\Z_p)$.
		\item Let $n\geq 6$ be an integer divisible by at least two distinct primes. We say that $E/\Q$ has a \cdef{horizontal CM entanglement} if $\Im (\rho_{E,p^\infty})$ is contained in but not equal to a conjugate subgroup of $\mathcal{N}_{\delta,\phi}(n)$ in $\GL(2,\Z/n\Z)$, and if the entanglement is not explained by a product of vertical tanglements.
	\end{enumerate} 
\end{defn}

\begin{remark}
	By \cite[Theorem 1.1]{lozanoCMreps}, there is always a $\Z/n\Z$-basis of $E[n]$ such that the image of $\rho_{E,n}$ is contained in $\mathcal{N}_{\delta,\phi}(n)\subseteq \GL(2,\Z/n\Z)$. Moreover, the index of $\Im( \rho_{E,n})$ in $\mathcal{N}_{\delta,\phi}(n)$ is a divisor of $4$ or $6$, and if $j\neq 0,1728$, then the index divides $2$. In fact, the same  is true for the adelic image as the index is a divisor of $|\OO_{K,f}^\times|$. This has been shown in \cite[Theorem 1.5]{lombardo}, a stronger version in \cite[Corollary 1.5]{bourdon-clark}, and in a slightly different way in \cite[Theorem 1.2]{lozanoCMreps}. 
\end{remark}

\begin{example}\label{exam:CMexample}
According to \cite[Theorem 1.7]{lozanoCMreps}, there is a $\Z_2$-basis of $T_2(E)$ such that the $2$-adic image of an elliptic curve with $j_E=1728$ is contained in the subgroup
$$\mathcal{N}_{-1,0}(2^\infty)=\left\langle \left(\begin{array}{cc} 1 & 0\\ 0 & -1\\\end{array}\right), \left\{\left(\begin{array}{cc} a & b\\ -b & a\\\end{array}\right)\in \GL(2,\Z_2) : a^2+b^2\not\equiv 0 \bmod 2 \right\} \right\rangle$$
of $\GL(2,\Z_2)$. In particular, the image of $\rho_{E,2}$ is contained in the subgroup of order $2$ given by
$$\left\{ \left(\begin{array}{cc} 1 & 0\\ 0 & 1\\\end{array}\right), \left(\begin{array}{cc} 0 & 1\\ 1 & 0\\\end{array}\right)\right\}\subseteq \GL(2,\Z/2\Z).$$
Hence, if $\Q(E[2])=\Q$, then the image of $\rho_{E,2}$ is trivial, and therefore the image of $\rho_{E,2^\infty}$ is at least of index $2$ in $\mathcal{N}_{-1,0}(2^\infty)$. Hence, there is a vertical collapsing. 
\end{example}

\subsection{{\bf Fake CM entanglements}}\label{sec-fakecm} Here we describe the last of our generic or explained abelian entanglements, which we call a \cdef{fake CM entanglement}. Let $E/\Q$ be an elliptic curve {\it without CM} and let $p>2$ be an odd prime such that the image of $\rho_{E,p}$ is conjugate to $\mathcal{N}(p)$, the normalizer of a (split or non-split) Cartan subgroup $\mathcal{C}(p)$ of $\GL(2,\FF_p)$. Then, there is an element $\tau \in \Gal(\Q(E[p])/\Q)$ of order $2$, such that $\mathcal{N}(p) = \mathcal{C}(p) \rtimes \langle \tau \rangle$. In particular, the Cartan subgroup is normal in $\mathcal{N}(p)$, and the fixed field  by $\mathcal{C}(p)$ is a quadratic extension $\widehat{K}_p=\Q(E[p])^{\mathcal{C}(p)}$ of $\Q$. The Weil pairing gives another quadratic subfield in $\Q(E[p])$, namely $F_p=\Q(\sqrt{\hat{p}})$, where $\hat{p} = (-1)^{(p-1)/2}\cdot p$. Moreover, $\widehat{K}_p\neq F_p$. Indeed, let $H$ be the subgroup of index $2$ of $\mathcal{N}(p)$ that fixes $F_p$. Since $F_p\subseteq \Q(\zeta_p)$, it follows that $\mathcal{N}(p)\cap \SL(2,\FF_p)\subseteq H$. However, $\mathcal{C}(p)$ does not contain $\mathcal{N}(p)\cap \SL(2,\FF_p)$. Indeed, $\mathcal{C}(p)$ contains at least one element $c$ of determinant $-1$, and $c\tau\in  \mathcal{N}(p)\cap \SL(2,\FF_p)$ but not in $\mathcal{C}(p)$. Further, we note that $-\operatorname{Id}$ is contained in $\mathcal{C}(p)$, and therefore $\widehat{K}_p\subseteq \Q(x(E[p]))$. In particular, $\widehat{K}_p$ is contained in $\Q(E'[p])$ for any quadratic twist $E'$ of $E/\Q$ (see, for instance, \cite[Lemma 9.6]{lozano2013field}). In particular, there is a square-free integer $d$, different from $\hat{p}$, such that $\widehat{K}_p=\Q(\sqrt{d})$. Further, there is a third quadratic subfield of $\Q(E[p])$, namely $\Q(\sqrt{d\cdot \hat{p}})$. 

More generally, let $E/\Q$ be an elliptic curve without CM such that the image of $\rho_{E,p}$ is contained in the normalizer of a Cartan subgroup of $\GL(2,\FF_p)$, and let $K_p(E)=\Q(E[p])\cap \Q^{\text{ab}}$. Then, Theorem 5.12 and Corollary 5.17 of \cite{danielsLR:coincidences} show that $\Gal(K_p(E))\cong (\Z/p\Z)^\times$ or $\Z/2\Z\times (\Z/p\Z)^\times$. In the first case, the only quadratic subfield of $\Q(E[p])$ is $\Q(\sqrt{\hat{p}})$. In the second case, when $\Gal(K_p(E))\cong \Z/2\Z\times (\Z/p\Z)^\times$, there are three quadratic subfields in $\Q(E[p])$, namely $\Q(\sqrt{\hat{p}})$, $\widehat{K}_p = \Q(\sqrt{d})$, and  $\Q(\sqrt{d\cdot \hat{p}})$. 

\begin{defn}\label{defn:fakeCM}
	Let $E/\Q$ be an elliptic curve without CM such that the image of $\rho_{E,p}$ is contained in the normalizer of a Cartan subgroup of $\GL(2,\FF_p)$, and suppose that $\Gal(K_p(E)) \cong \Z/2\Z\times (\Z/p\Z)^\times$. We say that $E/\Q$ has a \cdef{fake CM entanglement} if it has an abelian $(p,q)$-entanglement of type $\Z/2\Z$ where $\Q(E[p]) \cap \Q(E[q])\cong \widehat{K}_p$ or $\Q(\sqrt{d\cdot \hat{p}})$ as defined above. 
\end{defn}

\begin{example}
	The elliptic curve with LMFDB label \href{https://www.lmfdb.org/EllipticCurve/Q/338/d/2}{\texttt{338.d2}} and $j(E)=11^3/2^3$ is an example of a curve with a fake CM $(3,5)$-entanglement of type $\Z/2\Z$, where the entanglement field is $\widehat{K}_3 = \Q(\sqrt{13})$. In this case, the elliptic curve has normalizer of split Cartan image at $p=3$ and it is Borel at $q=5$. Since $j(E)$ is not integral, the elliptic curve is not a CM elliptic curve.
\end{example}

\section{Proofs of Theorem \ref{thmx:main0} and Corollary \ref{cor-main0}} \label{sec-other}
\label{sec-proofs_main_theorems}

Before we prove Theorem \ref{thmx:main0}, we introduce some work of Lemos and by two of the authors which we will use in the proof. Recall that Serre asked, for an elliptic curve over $\Q$ without CM, whether $\rho_{E,p}$ is surjective for all primes $p>37$.

\begin{theorem}
	[\cite{lemos}]\label{thm-lemos} Let $E/\Q$ be an elliptic curve over $\Q$ without CM, and let $p$ be a prime. 
	\begin{enumerate}
		\item Suppose that there is a prime $q$ such that the image of $\rho_{E,q}$ is contained in either a Borel subgroup or the normalizer of a split Cartan subgroup. Then, $\rho_{E,p}$ is surjective for $p>37$.
		\item Suppose that there is a prime $q$ such that the image of $\rho_{E,q}$ is contained in the normalizer of a split Cartan subgroup and a prime $p\geq 11$ such that the image of $\rho_{E,q}$ is contained in the normalizer of a non-split Cartan subgroup. Then, $j(E)$ is integral.
	\end{enumerate}
\end{theorem}

 Suppose $E/\Q$ is an elliptic curve, and $p,q$ are distinct primes such that $E$ has an abelian $(p,q)$-entanglement of type $S$. Then, it follows that $F=(\Q(E[p])\cap \Q(E[q]))\cap \Q^{\text{ab}}$ is a Galois extension of $\Q$ with $\Gal(F/\Q)\cong S$. Hence, $F\subseteq \Q(E[\ell])\cap \Q^{\text{ab}}$ for $\ell=p,q$. The first and second author have classified the possibilities for $\Q(E[\ell])\cap \Q^{\text{ab}}$ in \cite{danielsLR:coincidences}, Theorem 1.6 and Corollaries 5.3, 5.7, 5.11, 5.17, and 5.20. 

\begin{theorem}[\cite{danielsLR:coincidences}]\label{thm-coincidences}
	Let $E/\QQ$ be an elliptic curve and let $p$ be an odd prime. Let $K_p(E)=\Q(E[p])\cap \Q^{\ab}$. Then:
	\begin{enumerate}
		\item $\Gal(K_p(E)/\Q)\simeq (\Z/p\Z)^\times \times C$, where $C$ is a cyclic group of order dividing $p-1$. 
		\item If $E/\Q$ does not have a rational $p$-isogeny (equivalently, the image of $\rho_{E,p}$ is not contained in a Borel subgroup), then $C$ is trivial or of order 2 and $K_p(E)=F(\zeta_p)$ with $F/\Q$ a trivial or quadratic extension. 
		\item If the image of $\rho_{E,p}$ is exceptional or full, then $K_p(E)=\Q(\zeta_p)$.
	\end{enumerate} 
Finally, if $p=2$, then $K_p(E)$ is a trivial, quadratic, or cubic extension of $\Q$.
\end{theorem}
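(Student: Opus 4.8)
The plan is to reduce the statement to a computation of abelianizations of subgroups of $\GL_2(\mathbb{F}_p)$ and then run through Dickson's classification. Set $G=\Im(\rho_{E,p})\subseteq\GL_2(\mathbb{F}_p)$ and use $\rho_{E,p}$ to identify $\Gal(\mathbb{Q}(E[p])/\mathbb{Q})$ with $G$. Since $\mathbb{Q}^{\ab}$ is the maximal abelian extension of $\mathbb{Q}$, the field $K_p(E)=\mathbb{Q}(E[p])\cap\mathbb{Q}^{\ab}$ is the largest subextension of $\mathbb{Q}(E[p])/\mathbb{Q}$ abelian over $\mathbb{Q}$, so by Galois theory $\Gal(K_p(E)/\mathbb{Q})\cong G/[G,G]=G^{\ab}$ and everything reduces to computing $G^{\ab}$. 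Two arithmetic constraints on $G$ will be used throughout. First, $\det\circ\rho_{E,p}$ is the mod-$p$ cyclotomic character, so $\det\colon G\to\mathbb{F}_p^\times$ is surjective; this already gives $\mathbb{Q}(\zeta_p)\subseteq K_p(E)$, exhibits $(\mathbb{Z}/p\mathbb{Z})^\times$ as a quotient of $G^{\ab}$, and identifies $C:=\Gal(K_p(E)/\mathbb{Q}(\zeta_p))\cong(G\cap\SL_2(\mathbb{F}_p))/[G,G]$. Second, complex conjugation maps under $\rho_{E,p}$ to an element of $G$ with eigenvalues $1$ and $-1$, hence conjugate to $\smallmat{1}{0}{0}{-1}$ over $\overline{\mathbb{F}_p}$; in particular $G$ cannot be contained in a nonsplit Cartan subgroup, since no element of such a subgroup has eigenvalues $\{1,-1\}$.

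Next I would apply Dickson's classification to $G$ (a subgroup of $\GL_2(\mathbb{F}_p)$, $p$ odd, with surjective determinant), computing $G^{\ab}$ in each case:
\begin{itemize}
\item If $G\supseteq\SL_2(\mathbb{F}_p)$ then $G=\GL_2(\mathbb{F}_p)$; a direct commutator computation (sweeping out the unipotent radical by commuting a diagonal element $\mathrm{diag}(a,d)$ with $a\neq d$ against unipotents, together with $\SL_2(\mathbb{F}_3)'=Q_8$ when $p=3$) gives $[G,G]=\SL_2(\mathbb{F}_p)$, so $G^{\ab}=\mathbb{F}_p^\times$ and $K_p(E)=\mathbb{Q}(\zeta_p)$.
\item If $G$ lies in a Borel subgroup $B$ — equivalently, $E$ has a rational $p$-isogeny — then $[B,B]$ is the unipotent radical $U\cong\mathbb{Z}/p\mathbb{Z}$, so $[G,G]\subseteq G\cap U$; the same commutator computation (available because surjectivity of $\det$ forces some $g\in G$ to have distinct diagonal entries) shows $[G,G]=G\cap U$, it being forced to be all of $U$ once $G\cap U\neq1$ since $U$ is simple. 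Hence $G^{\ab}=G/(G\cap U)$ embeds in the diagonal torus $\mathbb{F}_p^\times\times\mathbb{F}_p^\times$ with surjective determinant, and any subgroup of $(\mathbb{Z}/(p-1)\mathbb{Z})^2$ surjecting onto $\mathbb{Z}/(p-1)\mathbb{Z}$ is $\cong(\mathbb{Z}/(p-1)\mathbb{Z})\times C$ with $C$ cyclic of order dividing $p-1$.
\item If $G$ is irreducible and contained in the normalizer $N_{\mathrm{s}}$ of a split Cartan $C_{\mathrm{s}}$ but not in $C_{\mathrm{s}}$ itself, then $H:=G\cap C_{\mathrm{s}}$ has index $2$ and a coset representative acts on $H\subseteq\mathbb{F}_p^\times\times\mathbb{F}_p^\times$ by swapping the two factors; one checks $[G,G]=\{h\bar h^{-1}:h\in H\}$ lies in the order-$(p-1)$ norm-one subtorus, and that $G^{\ab}\cong(\mathbb{Z}/p\mathbb{Z})^\times\times C$ with $|C|\le2$, the quadratic field $F$ of the statement being the fixed field of $H$.
\item If $G$ is irreducible and contained in the normalizer $N_{\mathrm{n}}$ of a nonsplit Cartan $C_{\mathrm{n}}$, then by the second constraint $G\not\subseteq C_{\mathrm{n}}$, so $G\cap C_{\mathrm{n}}$ has index $2$; the analogous argument with Frobenius in place of the swap gives $[G,G]$ inside the order-$(p+1)$ norm-one subgroup and $G^{\ab}\cong(\mathbb{Z}/p\mathbb{Z})^\times\times C$ with $|C|\le2$. (This is exactly where the conjugacy class of $\rho_{E,p}(c)$ matters: inside $C_{\mathrm{n}}$ one could have $G^{\ab}=G$ cyclic of order $2(p-1)$, which is not of the asserted form.)
\item Otherwise $G$ is irreducible with projective image one of $A_4,S_4,A_5$; then $[G,G]$ surjects onto the derived subgroup of the projective image, which for $A_4$ and $A_5$ forces $G=[G,G]\cdot(G\cap Z)$ with $Z$ the scalars, so $G^{\ab}$ is a cyclic quotient of $G\cap Z\subseteq\mathbb{F}_p^\times$, and surjectivity of $\det$ then forces $G^{\ab}\cong\mathbb{F}_p^\times$, i.e. $K_p(E)=\mathbb{Q}(\zeta_p)$. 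For $S_4$ (projective derived subgroup $A_4$ of index $2$) one refines this using the image of complex conjugation, or appeals to the explicit finite list of exceptional mod-$p$ images of $E/\mathbb{Q}$ and checks $G^{\ab}$ for each.
\end{itemize}

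Assembling the cases gives (1). For (2), "no rational $p$-isogeny" means $G$ is not contained in a Borel, so only the first, third, fourth, and fifth bullets apply, and each yields $K_p(E)=F(\zeta_p)$ with $[F:\mathbb{Q}]\le2$ and $C$ of order $\le2$; for (3), a "full" or "exceptional" image lands in the first or fifth bullet, giving $K_p(E)=\mathbb{Q}(\zeta_p)$. Finally, when $p=2$ one has $G\subseteq\GL_2(\mathbb{F}_2)\cong S_3$, whose subgroups have abelianization trivial, $\mathbb{Z}/2\mathbb{Z}$, or $\mathbb{Z}/3\mathbb{Z}$, so $K_2(E)$ is trivial, quadratic, or cubic.

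The main obstacle is the last bullet, the $S_4$-type exceptional images, where a clean uniform computation of $G^{\ab}$ is awkward and one really wants either the extra rigidity coming from $E$ being defined over $\mathbb{Q}$ (surjective determinant together with the precise conjugacy class of $\rho_{E,p}(c)$) or the finite classification of exceptional images, in order to rule out a spurious quadratic factor. A secondary nuisance is the bookkeeping in the two Cartan-normalizer cases: matching the abstract extension of $\mathbb{F}_p^\times$ by $\mathbb{Z}/2\mathbb{Z}$ to the asserted product $(\mathbb{Z}/p\mathbb{Z})^\times\times C$, and pinning down the quadratic field $F$, requires care about $2$-parts, especially when $p\equiv1\pmod4$.
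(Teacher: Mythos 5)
You should first note that the paper itself contains no proof of this statement: it is imported verbatim from \cite{danielsLR:coincidences} (Theorem 1.6 together with the corollaries cited there), and your overall strategy --- identify $\Gal(K_p(E)/\Q)$ with the abelianization of $G=\Im(\rho_{E,p})$, then run through the possible images (Borel, Cartan normalizers, exceptional, full) using surjectivity of $\det$ and the image of complex conjugation --- is exactly the strategy of that reference. The reduction to $G^{\mathrm{ab}}$, the Borel and full cases, and the $p=2$ case are correct as you present them.

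There are, however, two genuine gaps, and they sit precisely where the content of parts (1)--(3) lies. First, the exceptional case: your claim that projective image $A_4$ forces $G=[G,G]\cdot(G\cap Z)$ is false, since $A_4$ is not perfect ($[A_4,A_4]=V_4$, so $[G,G]$ has projective image $V_4$), and your argument then gives no control on $G^{\mathrm{ab}}$. The correct observation is that $A_4$ and $A_5$ cannot occur at all for $E/\Q$: composing $\det$ with $\FF_p^\times\to\FF_p^\times/(\FF_p^\times)^2$ kills the scalars and induces a surjection from the projective image onto $\Z/2\Z$, which $A_4$ and $A_5$ do not admit. That leaves only $S_4$, which you explicitly do not treat --- but part (3) is exactly the assertion that no extra quadratic survives there, so this is not an optional refinement; it must be closed (as in the cited work) via the complex-conjugation constraint or the known classification of exceptional mod-$p$ images over $\Q$. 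Second, in the Cartan-normalizer cases the direct-product statement of part (1) and the claim $K_p(E)=F(\zeta_p)$ of part (2) require ruling out $G^{\mathrm{ab}}$ cyclic of order $2(p-1)$, and surjective determinant alone does not do this: for example the subgroup generated by the scalars and an antidiagonal $x$ with $x^2=\lambda I$, $\lambda$ a generator of $\FF_p^\times$, is cyclic of order $2(p-1)$ with surjective determinant, and is excluded only because it contains no element conjugate to $\smallmat{1}{0}{0}{-1}$ (it lies in a nonsplit Cartan). So the complex-conjugation input is needed in the split case as well, not only in the nonsplit one, and the ``$2$-part bookkeeping'' you defer is the actual proof of part (1) there. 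It does close, and you should include it: if the image of complex conjugation lies outside $H=G\cap C_{\mathrm{s}}$ it yields an order-$2$ direct complement to $H/[G,G]$ in $G^{\mathrm{ab}}$; if it lies in $H$, then taking $h=\smallmat{1}{0}{0}{-1}$ in your formula $[G,G]=\{h\bar h^{-1}\}$ gives $-I\in[G,G]$, so the scalars have image of order at most $(p-1)/2$ in $G^{\mathrm{ab}}$, and since any element of $G\setminus H$ squares to a scalar, no element of $G^{\mathrm{ab}}$ has order $2(p-1)$; in either case the structure theory of abelian groups surjecting onto $(\Z/p\Z)^\times$ then yields $G^{\mathrm{ab}}\cong(\Z/p\Z)^\times\times C$ with $|C|\le 2$.
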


\subsection{Proof of Theorem \ref{thmx:main0}}

	Suppose $E/\Q$ is an elliptic curve without CM and let $p<q$ be primes such that $E$ has an abelian $(p,q)$-entanglement. Let $F=\Q(E[p])\cap \Q(E[q])\cap \Q^{\text{ab}}$. Then, $F\subseteq K_p(E)\cap K_q(E)$, where $K_s(E)=\Q(E[s])\cap \Q^{\text{ab}}$. We distinguish several cases:
	\begin{enumerate}
		\item If $p=2$, then $F\subseteq K_2(E)\subseteq \Q(E[2])$ can be quadratic or cyclic cubic. If $F$ is quadratic, then there is a discriminant entanglement (see Sections \ref{subsec:disc_entangles} and \ref{sec:other-disc-ent}). 
		
		\item If $p=2$, and $F$ is a cyclic cubic, then  $d_F=[F:\Q]=3$. In this case $F=\Q(E[2])$ is a cyclic cubic number field, and $F$ appears also in $\Q(E[q])$. By Theorem \ref{thm-coincidences}, the curve $E$ must have a rational $q$-isogeny where $3$ divides $q-1$. 
		By the classification of cyclic rational isogenies (see, for instance, Section 9 of \cite{lozano2013field} and in particular Theorem 9.5), we must have $q\in \{7,13,19,37,43,67,163\}$. Since $X_0(\ell)$ with $\ell\in \{19,37,43,67,163\}$ only has finitely many rational points, these choices of $q$ lead to, at most, finitely many examples ($j$-invariants). If $\ell=43,67,163$ the only $j$-invariants are CM ones, but we are assuming $E$ does not have CM. If $\ell=19$ or $37$, then there are three possible $j$-invariants, and one can easily verify that elliptic curves with those three $j$-invariants have full $2$-adic image, and therefore $\Q(E[2])$ is not cyclic cubic.
		
		If $q=13$, we have used \texttt{Magma} to look for subgroups of $\GL(2,\Z/26\Z)$ that would surject to the corresponding images mod $2$ and mod $13$, with a $\Z/3\Z$-entanglement. Our computations show that there are precisely $4$ such subgroups, and none of these define modular curves of genus $0$ or $1$ (they have genus $3,3,3$, and $5$, respectively), and any elliptic curve with this prescribed entanglement must correspond to a $\mQ$-rational point on a modular curve of genus $3$ or $5$. Therefore, Faltings' theorem tells us there are at most finitely many $j$-invariants of elliptic curves with a $(2,13)$-entanglement of type $\Z/3\Z$. 
		
		If $q=7$, however, there are such entanglements, and in fact there are such entanglements with the added condition that $F$ is not a subfield of $\Q(\zeta_7)$, which makes the entanglement not of Weil type. These entanglements have been described in \cite{danielsMorrow:Groupentanglements}. 
		The authors show that there are three genus $0$ modular curves whose $\Q$-points correspond to elliptic curves with such an abelian entanglement, and their parametrizations appear in Theorem \ref{thmx:main0}.

		\item If $2<p<q$, and the images of $\rho_{E,p}$ and $\rho_{E,q}$ are exceptional or full, then Theorem \ref{thm-coincidences} implies that $F\subseteq K_p(E)\cap K_q(E)=\Q(\zeta_p)\cap \Q(\zeta_q) = \Q$, and therefore the entanglement would be trivial.
		
		\item If both $p$ and $q$ are odd, and the images of $\rho_{E,p}$ and $\rho_{E,q}$ are contained in Borel subgroups, then $E$ has a rational cyclic $pq$-isogeny.  By the classification of cyclic rational isogenies (see, for instance, Section 9 of \cite{lozano2013field} and in particular Theorem 9.5), we must have $pq=15$, and the only $j$-invariants with a $15$-isogeny are $j\in \{-5^2/2, -5^2\cdot 241^3/2^3,-5\cdot 29^3/2^5,5\cdot 211^3/2^{15} \}.$  In all four cases, the $3$-torsion is defined over a field of degree dividing $6$ while the $5$-torsion is defined over a field of degree dividing $20$ so the intersection must be trivial or quadratic. As we show in Example \ref{exam:nonSerreWeilCM}, quadratic entanglements are possible in this case.
		
		\item The last case remaining is $p,q$ odd, with $d_F=2$, but without rational $pq$-isogenies. Thus, $F=(\Q(E[p])\cap \Q(E[q]))\cap \Q^{\text{ab}}$ is a quadratic extension of $\Q$, and since there is no $pq$-isogeny either the mod-$p$ or mod-$q$ image is not contained in a Borel subgroup. Thus, one or both images are contained in a normalizer of a Cartan subgroup, and therefore, the entanglement is either of Weil type or of fake CM type.

	\end{enumerate} 
This completes the proof of Theorem \ref{thmx:main0}. 

	\begin{example}  
	For instance, the curve with LMFDB label \href{https://www.lmfdb.org/EllipticCurve/Q/1922/c/2}{\texttt{1922.c2}} has an abelian $(2,7)$-entanglement of type $\Z/3\Z$, which is not Serre, Weil, or CM. 
	Indeed, the entanglement field $F=\Q(E[2])$ is the cyclic cubic field that corresponds to $x^3-x^2-10x+8=0$, with discriminant $31^2$, while the field of definition of one kernel of the $7$-isogeny is a cyclic sextic field with discriminant $-31^5$ that contains $F$ as its unique cubic subfield.
\end{example} 

\begin{example}\label{exam:nonSerreWeilCM}
	Let $d\neq -2,-3, 5$ be a square-free integer and let $E^{(d)}/\Q$ be the quadratic twist by $d$ of the curve $E\colon y^2+xy+y = x^3-126x-552$ with LMFDB label \href{https://www.lmfdb.org/EllipticCurve/Q/50/a/1}{\texttt{50.a1}} with $j(E)=-5^2\cdot 241^3/2^3$. The image of the representation $\rho_{E,p}$ for $p=3,5$ is of the form
	$$\left\{\left(\begin{array}{cc}
	\chi_p & b\\
	0 & 1\\
	\end{array}\right) : b \in \ZZ/p\ZZ \right\}\subseteq \GL(2,\FF_p).$$
	Hence, if $\chi_d$ is the quadratic character associated to $\Q(\sqrt{d})$, then the image of $\rho_{E^{(d)},p}$ is of the form
	$$\left\{\left(\begin{array}{cc}
	\chi_p\chi_d & b\\
	0 & \chi_d\\
	\end{array}\right) : b \in \ZZ/p\ZZ \right\}\subseteq \GL(2,\FF_p).$$
	In particular, $\QQ(E^{(d)}[p])\cap \QQ^\text{ab} = \QQ(\zeta_p,\sqrt{d})$ for $p=3,5$, and so $\QQ(E^{(d)}[3])\cap \QQ(E^{(d)}[5])=\QQ(\sqrt{d})$. 
	
	Note that:
	\begin{itemize}
		\item  $E$ is a non-CM curve, so its twists $E^{(d)}$ are non-CM as well. Thus the entanglement is not of CM type. 
		\item The entanglement is between the $3$rd and $5$th division fields, so it is not a discriminant entanglement (since it does not involve the $2$nd division field). Moreover, since $\Delta_E = -1\cdot 2^3\cdot 5^4$, we have that $\QQ(\sqrt{\Delta_{E^{(d)}}})=\QQ(\sqrt{-2})$, and hence there is a discriminant entanglement between $2$ and $p=3,5$ if and only if $d\neq -2$.
		\item Finally, $\QQ(\sqrt{d})$ is not contained in $\QQ(\zeta_p)$ for $p=3,5$, as long as $d\neq -3,5$. Hence, the entanglement is not of Weil type. We notice, however, that if $n$ is the smallest positive integer such that $\QQ(\sqrt{d})\subseteq \QQ(\zeta_n)$, then there is an additional $(p,n)$-Weil entanglement of type $\ZZ/2\ZZ$.  
	\end{itemize}
\end{example}

\subsection{Proof of Corollary \ref{cor-main0}} In order to prove the Corollary, we need to analyze all the possibilities of quadratic entanglements in more depth.  

	\begin{enumerate}
	\item If $p=2$, then $F\subseteq K_2(E)\subseteq \Q(E[2])$ can be quadratic or cyclic cubic. If $F$ is quadratic, then there is a discriminant entanglement. If the image of $\rho_{E,q}$ is exceptional or full, then Theorem \ref{thm-coincidences} implies that $F\subseteq K_q(E)=\Q(\zeta_q)$ and therefore the entanglement is of Serre type. If the image of $\rho_{E,q}$ is contained in a Borel subgroup, then either $q\in \{3,5,7,13\}$ and the possibilities have been parametrized by \cite{danielsMorrow:Groupentanglements}, or Lemma \ref{lem-borel_disc_entanglement} shows that $\ell=11$ and there are two possible $j$-invariants. Otherwise, the image of $\rho_{E,q}$ is contained in the normalizer of a Cartan subgroup (but not in a Borel) and therefore the entanglement is either of Serre type (if $F=\Q(\sqrt{\hat{p}})$), or of fake CM type. If the entanglement is of fake CM type, and it corresponds to an infinite family, those once again appear in \cite{danielsMorrow:Groupentanglements} and those do occur for $q=3,5,7$. Otherwise, the corresponding modular curves are of genus $2$ or higher and contain at most finitely many rational points. Moreover, if we assume Serre's uniformity then $q\leq 37$, so there are only finitely many possibilities in total.
	
			\item The last case remaining is $p,q$ odd, with $d_F=2$, but without rational $pq$-isogenies, and $F=(\Q(E[p])\cap \Q(E[q]))\cap \Q^{\text{ab}}$ is a quadratic extension of $\Q$. First we study the case when one of the images is contained in a Borel subgroup $B$ and another image is contained in a normalizer of a Cartan subgroup. Since we are assuming that the elliptic curve does not have CM, then the classification of rational cyclic isogenies over $\Q$ together with Theorem \ref{thm-lemos}, show that $p,q\leq 37$ (without the need of Serre's uniformity).

	Note that if the image of $\rho_{E,q}$, say, is contained in a Borel, then there is either one or three non-trivial quadratic subfields of $\Q(E[q])$, where one of them is $\Q(\sqrt{\hat{q}})$ with $\hat{q} = (-1)^{(q-1)/2}\cdot q$. Hence, it may occur that $F=\Q(E[p])\cap \Q(E[q])\cap \Q^{\text{ab}}$ is one of the three quadratic subfields, and if $F\neq \Q(\sqrt{\hat{q}})$, then $E/\Q$ has an abelian $(p,q)$-entanglement of type $\Z/2\Z$ that is not a Weil entanglement.  Since $E/\Q$ does not have CM, then it is not a CM entanglement, and since $p,q$ are odd, it is not a discriminant entanglement either, thus not of Serre type. Since we are assuming the other image, that of $\rho_{E,p}$, is contained in the normalizer of a Cartan subgroup, it would be an entanglement of fake CM type. If there are infinitely many $j$-invariants that correspond to one of these entanglements, then there would be a modular curve of genus $0$ or $1$ (with positive rank) that corresponds to such mod $pq$ image. The tables in \cite{Daniels2019} show that these are the possibilities that  occur in genus $0$ and $1$:
	\begin{itemize}
		\item $(3\text{Nn},5\text{B})$ corresponds to a modular curve of genus $0$ which parametrize elliptic curves with $j$-invariant $j(t)=(3125t^6+250t^3+1)^3/t^3$. Using the results of \cite{danielsMorrow:Groupentanglements}, we know that there are exactly 2 groups representing entanglements between 3- and 5-division fields of this type with infinitely many points. They are both genus $0$ curves that parametrize (3,5)-entanglements of type $\ZZ/2\ZZ$, and their parametrizations appear in Corollary \ref{cor-main0}. 
		\item $(3\text{Ns},5\text{B})$ corresponds to a modular curve of genus $1$ but the rank is $0$. Thus, there are only two such $j$-invariants, which were computed in \cite{Daniels2019}, namely $11^3/2^3$ and $-29^3\cdot 41^3/2^{15}$. The elliptic curves with LMFDB labels \href{https://www.lmfdb.org/EllipticCurve/Q/338/d/2}{\texttt{338.d2}} and \href{https://www.lmfdb.org/EllipticCurve/Q/338/b/1}{\texttt{338.b1}}, with $j$-invariants  $j(E)=11^3/2^3$ and $-29^3\cdot 41^3/2^{15}$ respectively, are examples of curves with a $(3,5)$-entanglement of type $\Z/2\Z$, where the entanglement field is $F = \Q(\sqrt{13})$.
	\end{itemize}
	Any other possible combination of $p$ and $q$ would correspond to a modular curve of genus $\geq 2$, and since $p,q\leq 37$ as explained before, it follows that there are only finitely many $j$-invariants that could possibly have a fake CM entanglement other than those in the infinite families described above.
	
	\item The last case to study is when both images are contained in normalizer of Cartan subgroups. Hence, the entanglement is either of Weil type or of fake CM type. By work of Bilu, Parent, and Rebolledo, if the image of $\rho_{E,p}$ is contained in a normalizer of a split Cartan subgroup, then $p\leq 11$. So either $q>11$ and the image of $\rho_{E,q}$ is contained in a normalizer of a non-split Cartan, or $p,q\leq 11$. 
	
	In \cite{Daniels2019}, Daniels and Gonz\'alez-Jim\'enez have classified all the possible combinations of mod $p$ and mod $q$ images that are associated to modular curves (of level $pq$) of genus $0$ or $1$ (with positive rank). There they find that the only possibilities of mod $p$ and mod $q$ images which occur for infinitely many $j$-invariants are $(3\text{Nn},5\text{Nn})$, $(3\text{Nn},5\text{Ns})$, and $(3\text{Nn},7\text{Nn})$ which are parametrized by elliptic curves of positive rank (see Tables 9 and 10 of \cite{Daniels2019} for the equations). However, if a non-Weil entanglement did occur, then there is a quadratic entanglement $F=\Q(E[p])\cap \Q(E[q])\cap \Q^{\text{ab}}$. We have computed the genus of the quotients of images $(3\text{Nn},5\text{Nn})$, $(3\text{Nn},5\text{Ns})$, and $(3\text{Nn},7\text{Nn})$ where such a coincidence would occur, and none have genus $<2$ (in fact, no subgroup of these images has genus $<2$). Therefore, if any such examples exist, they do only for finitely many $j$-invariants. 
	
	It follows that if we assume Serre's uniformity question, then $p,q\leq 37$, and therefore there would be only finitely $j$-invariants that correspond to a fake CM entanglement such that the images of $\rho_{E,p}$ and $\rho_{E,q}$ are contained in a normalizer of a Cartan subgroup.
	\end{enumerate} 
This completes the proof of Corollary \ref{cor-main0}.

\section{\bf Entanglements of Weil-type}
\label{sec:explainedentanglements}
In this section, we prove several results on entanglements of Weil-type, and to conclude, we will present two questions which relates explained entanglements to generic polynomials. 

\subsection{Weil $(2,n)$-entanglements of $\ZZ/3\ZZ$-type}\label{subsec:explained_2,n_type3} 
This type of entanglement occurs in the case when $\QQ(E[2])/\QQ$ is an $\ZZ/3\ZZ$-extension. Elliptic curves with this kind of entanglement have already been studied in \cite[Prop. 8.4]{morrow2017composite}, which uses work of Gauss and Rubin--Silverberg. We note here that if $\QQ(\zeta_n)$ contains a cyclic cubic number field $L$, then there is an odd prime $p$ and $e\geq 1$ such that $L\subseteq \QQ(\zeta_{p^e})\subseteq \QQ(\zeta_n)$ and, in fact, either $p^e=9$ or $p^e=p$ if $p>3$ (note that, of course, not every cyclic cubic extension has prime conductor, but here for simplicity we concentrate on prime conductor cyclic cubics). In particular, it follows that if an elliptic curve $E/\QQ$ has a non-trivial $((2,n),\ZZ/3\ZZ)$-entanglement (in the notation of Definition \ref{defn:primitiveentanglement}), then $E$ also has a $((2,p^e),\ZZ/3\ZZ)$ entanglement, and $((2,p^e),\ZZ/3\ZZ)\leq ((2,n),\ZZ/3\ZZ)$. Thus, it suffices to realize $((2,p),\ZZ/3\ZZ)$ entanglements for $p>3$, and $((2,n),\ZZ/3\ZZ)$ for $n=9$.

\begin{proposition} \label{prop:explained_2,n_type3} 
Let $p$ be a prime, and let $e\geq 1$ such that $3\mid \varphi(p^e)$. Then, there exist infinitely many $\overline{\QQ}$-isomorphism classes of elliptic curves $E/\QQ$ (which are explicitly parametrized) such that $\QQ(E[2])\cap \QQ(E[p^e])\cong L$, where $L$ is the degree $3$ subfield of $\QQ(\zeta_{p^e})$.
\end{proposition}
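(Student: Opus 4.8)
The plan is to realize $L$ as the $2$-division field of an explicit one-parameter family of elliptic curves over $\QQ$ and then read off the entanglement from the Weil pairing. Since $3\mid\varphi(p^e)$ forces $p$ to be odd, $\Gal(\QQ(\zeta_{p^e})/\QQ)\cong(\ZZ/p^e\ZZ)^\times$ is cyclic, so $\QQ(\zeta_{p^e})$ has a unique subfield $L$ with $[L:\QQ]=3$, and $L/\QQ$ is cyclic. The first observation is that for \emph{any} $E/\QQ$ with $\QQ(E[2])=L$ the conclusion is immediate: the Weil pairing gives $\QQ(\zeta_{p^e})\subseteq\QQ(E[p^e])$, hence $L\subseteq\QQ(E[p^e])$, and therefore $\QQ(E[2])\cap\QQ(E[p^e])=L\cap\QQ(E[p^e])=L$. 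So the whole task reduces to exhibiting infinitely many $\overline{\QQ}$-isomorphism classes of $E/\QQ$ with $\QQ(E[2])\cong L$, which is exactly \cite[Prop.~8.4]{morrow2017composite} (via work of Gauss and Rubin--Silverberg); I would either invoke it directly or reprove it as follows.

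Choose a generator $\theta$ of $L$ with $\Tr_{L/\QQ}(\theta)=0$, so that its minimal polynomial is $g(x)=x^3+ax+b$ with $b\neq 0$ (else $g$ is reducible) and $a\neq 0$ (else the splitting field of $x^3+b$ is not cyclic cubic). Let $\sigma$ generate $\Gal(L/\QQ)$ and set $\theta_i=\sigma^{i-1}\theta$. For $(s:t)\in\PP^1(\QQ)$, put $\alpha_1=s\theta_1+t\theta_2$, $\alpha_i=\sigma^{i-1}\alpha_1$, let $f_{s,t}(x)=\prod_i(x-\alpha_i)\in\QQ[x]$, and define $E_{s,t}\colon y^2=f_{s,t}(x)$. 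Using that $\theta_1,\theta_2$ are $\QQ$-linearly independent, a short argument shows $\alpha_1\in\QQ$ only when $(s,t)=(0,0)$; hence for $(s,t)\neq(0,0)$ the cubic $f_{s,t}$ is irreducible with splitting field $\QQ(\alpha_1)=L$ (here we use that $L/\QQ$ is Galois), so $E_{s,t}$ is an elliptic curve and $\QQ(E_{s,t}[2])=L$.

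The remaining, and in my view only delicate, step is to show that $j(E_{s,t})$ is non-constant on $\PP^1$ --- otherwise the family would consist of quadratic twists of a single curve, which would still all have $2$-division field $L$. Writing $f_{s,t}(x)=x^3+A(s,t)x+B(s,t)$ (the quadratic term vanishes because $\sum_i\alpha_i=0$), a computation with the power sums of the $\theta_i$ gives $A(s,t)=a\,(s^2-st+t^2)$, while $B(s,t)$ is a binary cubic form with $B(1,0)=b\neq 0$. Since $j(E_{s,t})=1728\cdot 4A^3/(4A^3+27B^2)$, constancy of $j$ is equivalent to proportionality of the binary sextics $A^3$ and $B^2$; but over $\overline{\QQ}$ the form $A^3=a^3(s^2-st+t^2)^3$ has each of its two (complex-conjugate, primitive sixth-root-of-unity) linear factors to exactly the third power, so proportionality would force $(s^2-st+t^2)^2\mid B$, impossible for a nonzero cubic. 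Therefore $j(E_{s,t})$ takes infinitely many values as $(s:t)$ ranges over $\PP^1(\QQ)$, producing infinitely many $\overline{\QQ}$-isomorphism classes and completing the proof. The explicit factorization of $A(s,t)$ is what makes this last step clean; without it one would instead argue abstractly that the curve parametrizing elliptic curves with $E[2]$ the (unique) faithful $\FF_2[\ZZ/3\ZZ]$-module is a genus-zero twist of $X(2)$ mapping non-constantly to the $j$-line.
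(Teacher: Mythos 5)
Your proof is correct, and its overall skeleton is the same as the paper's: since $3\mid\varphi(p^e)$ forces $p$ odd and $(\Z/p^e\Z)^\times$ cyclic, there is a unique cubic subfield $L\subseteq\QQ(\zeta_{p^e})$, and once one produces infinitely many $\overline{\QQ}$-isomorphism classes with $\QQ(E[2])=L$, the Weil pairing gives $L\subseteq\QQ(\zeta_{p^e})\subseteq\QQ(E[p^e])$ and hence $\QQ(E[2])\cap\QQ(E[p^e])=L$ exactly as in the paper. Where you differ is in how the family with prescribed $2$-division field is produced. The paper splits into cases: for $p>3$ (so $3\mid p-1$) it simply cites \cite[Prop.~8.4]{morrow2017composite}, which rests on Gauss's cyclic cubic polynomials and the Rubin--Silverberg machinery, and for $p^e=9$ it writes down an explicit Rubin--Silverberg family $\mathcal{E}_t$ with $\QQ(\mathcal{E}_t[2])=L$ and non-constant $j$. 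You instead give a uniform, self-contained construction valid for every $L$ at once: take a trace-zero generator $\theta$ of $L$ with minimal polynomial $x^3+ax+b$, form the pencil $\alpha_1=s\theta_1+t\theta_2$ of Galois orbits, and prove non-constancy of $j$ by the explicit computation $A(s,t)=a(s^2-st+t^2)$ together with the parity/degree obstruction to $B^2\propto A^3$ (your formula for $A$ and the identity $j=1728\cdot 4A^3/(4A^3+27B^2)$ both check out, and $a,b\neq 0$ are justified correctly). This buys a single argument covering both $p>3$ and $p=3$, $e\geq 2$, with a genuine proof of non-constancy of $j$ — a point the paper only asserts for its $p^e=9$ family and otherwise delegates to the cited reference — at the cost of being less ``plug-and-play'' explicit than the paper's displayed Weierstrass model. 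One small expository gap: your claim that $\alpha_1\in\QQ$ forces $(s,t)=(0,0)$ needs the linear independence of $\{1,\theta_1,\theta_2\}$, not merely of $\theta_1,\theta_2$; this is true (a relation $\theta_2=c+d\theta_1$ over $\QQ$ combined with $\theta_1+\theta_2+\theta_3=0$ would force $1+d+d^2=0$), so the ``short argument'' you allude to does exist, but it should be spelled out.
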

\begin{proof}
	If $p>3$, then $p\geq 7$ and $3\mid p-1$. This case is taken care of in \cite[Prop. 8.4]{morrow2017composite}. Otherwise, suppose $p^e=9$. Then, $L\subseteq \QQ(\zeta_9)$ is the cubic field generated by the roots of the polynomial $f(x)=x^3 - 6x^2 + 9x - 3$. Let $E/\QQ$ be the elliptic curve given by $y^2=f(x)$ so that $\QQ(E[2])=L$. Following a procedure detailed in  \cite{rubinsilverberg} we can find a family of elliptic curves $\mathcal{E}_t/\QQ(t)$, one for each $t\in\QQ$, given by
	$$\mathcal{E}_t \colon y^2 =  x^3 - 3888(2303t^2 + 1)x - 46656(-2303t^3 - 6909t^2 + 3t + 1),$$
	such that $\QQ(\mathcal{E}_t[2])=\QQ(E[2])=L$ (further, the $j$-invariant of $E$ is non-constant as a function of $t$, so there are infinitely many non-isomorphic elliptic curves in the family). Since $L\subseteq \QQ(\zeta_9)\subseteq \QQ(\mathcal{E}_t[9])$, it follows that $\QQ(\mathcal{E}_t[2])\cap \QQ(\mathcal{E}_t[9])=L\cap \QQ(\mathcal{E}_t[9])=L\subseteq \Q(\zeta_9)$, as desired.
\end{proof}

\begin{remark}
Proposition \ref{prop:explained_2,n_type3} can be made explicit in the case where $p$ is prime such that $3\mid p-1$ using the \texttt{Magma} intrinsic \texttt{RubinSilverbergPolynomials}.
\end{remark}

\subsection{Weil $(3,n)$-entanglements of $\ZZ/2\ZZ$-type}\label{subsec:explained_3,n_type2}
We now turn our attention to $(3,n)$-entanglements of Weil-type.
We note that there are two possible ways to construct elliptic curves whose mod $3n$ representation has an image representing a Weil $(3,n)$-entanglement of $\ZZ/2\ZZ$-type.

First, we can construct elliptic curves such that either $\QQ(\zeta_3) \cap \QQ(E[n])$ is larger than expected (i.e., not equal to $\mQ$) or $\QQ(\zeta_n)\cap\QQ(E[3])$ is larger than expected. 
It turns out that the easiest way for one of these two things to be true is for $\Im(\rho_{E,n})$ (or resp.~$\Im(\rho_{E,3})$) to not be surjective; note that in the $n=2$ case, this produces a Serre $(2,3)$-entanglement which has been classified in Proposition \ref{prop:infinte_Serre}. 
Indeed, if $n\geq 3$ and $\rho_{E,n}$ is surjective, then $\Gal(\QQ(E[n])/\QQ)\simeq \GL(2,\ZZ/n\ZZ)$, and so we can determine the intersection
\[\QQ(E[n]) \cap \QQ^{\ab} =\begin{cases}
 \QQ(\zeta_n) & n \hbox{ is odd}\\	
  \QQ(\zeta_n,\sqrt{\Delta_E}) & n \hbox{ is even},\\
\end{cases}
\] 
where $\Delta_E$ is the discriminant of the elliptic curve in question.
Moreover, this tells us that it is more difficult to find anything larger than expected in those two intersections.
Assuming a positive answer to Serre's uniformity question (that is, $\Im(\rho_{E,p})$ is surjective onto $\GL(2,\Z/p\Z)$ for all primes $p>37$, for non-CM curves), there is an upper bound on how large $n$ can be if $\Im(\rho_{E,n})$ is not surjective. 
We analyze both cases separately. 

Suppose that $\Im(\rho_{E,3})$ is not surjective, and in particular that $\Im(\rho_{E,3})$ is conjugate to a subgroup of the full Borel subgroup of $\GL(2,\ZZ/3\ZZ)$ (i.e., $E$ has a rational $3$-isogeny).
Let $G=B(3)$ be the Borel subgroup of $\GL(2,\ZZ/3\ZZ)$.  We have that $\QQ(E[3])$ contains up to $3$ different quadratic extensions. 
If $P$ is a generator of the kernel of a 3-isogeny, then we can see that these quadratic extensions are $\QQ(\sqrt{-3})$, $\QQ(P)$, and the compositum of these two fields. 

We want to understand what extensions (either quadratic or trivial) arise as the field $\QQ(P)$. To begin, we start with a concrete example. 

\begin{example}\label{exam:explained_3,n}
Let $E/\QQ$ be the elliptic curve with LMFDB label \href{https://www.lmfdb.org/EllipticCurve/Q/19/a/1}{\texttt{19.a1}}. 
This elliptic curve has a rational $3$-isogeny and discriminant $\Delta_E = -19$. 
The kernel of the $3$-isogeny is generated by \[P = \left(-\frac{49}{3} , -\frac{9+\sqrt{-3}}{18} \right),\] and we have that $\QQ(x(P))=\QQ$ and $\QQ(P) = \QQ(\sqrt{-3})$. 
For a quadratic twist $E^{(d)}$, we denote the corresponding generator of the kernel of the 3-isogeny by $P_d$, so that $\QQ(P_d) = \QQ(\sqrt{-3d}).$ 
Thus, we have that for any square-free $d\in\Z$, there is an elliptic curve with a $3$-isogeny whose kernel is defined over $\QQ(\sqrt{d})$, namely,  one can just twist the elliptic curve $E$ by $-3d$. 
Therefore, we are able to realize any quadratic extension as the field of definition of the kernel of the $3$-isogeny through appropriate twisting. 
\end{example}

Next, we extend the technique from Example \ref{exam:explained_3,n}, to produce an infinite family of non-isomorphic elliptic curves, which have a Weil $(3,n)$-entanglement of type $\zZ{2}$.

\begin{proposition}\label{prop-3,n-entanglement}
Let $n\in\NN$ such that $n>1$ and $3\nmid n$, and let $K$ be a quadratic field contained in $\Q(\zeta_n)$. Then there are infinitely many $\overline{\QQ}$-isomorphism classes of elliptic curves with a Weil $(3,n)$-entanglement of type $\ZZ/2\ZZ$, such that the entanglement field is $K$. 
\end{proposition}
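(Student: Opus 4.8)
The plan is to upgrade the twisting observation in Example \ref{exam:explained_3,n} from a single curve to a one-parameter family of $j$-invariants, so that infinitely many $\overline{\QQ}$-isomorphism classes appear. Write $K = \QQ(\sqrt{d})$ with $d$ squarefree; the hypotheses already force $d \neq -3$, since $\QQ(\sqrt{-3}) \not\subseteq \QQ(\zeta_n)$ when $3 \nmid n$. I would begin with the standard genus-zero family $\mathcal{E}_t \colon y^2 + txy + y = x^3$ over $\QQ(t)$, with marked point $P_t = (0,0)$ of order $3$: the tangent line $y = 0$ at $P_t$ meets $\mathcal{E}_t$ with multiplicity three at $P_t$, so $3P_t = O$. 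In a short Weierstrass model the coordinates of $P_t$ lie in $\QQ(t)$, and $y(P_t) \neq 0$ because $P_t$ is not $2$-torsion. I would then set $\mathcal{F}_t := \mathcal{E}_t^{(d)}$, the quadratic twist by $d$. Its order-$3$ subgroup $\langle P_t\rangle$ is stable under $\Gal(\overline{\QQ(t)}/\QQ(t))$ (Galois acts on the twisting class $\sqrt{d}$ through $\pm 1$, and $-P_t \in \langle P_t\rangle$), so $\mathcal{F}_t$ has a rational $3$-isogeny; and because the coordinates of $P_t$ lie in $\QQ(t)$ with $y(P_t)\neq 0$, the corresponding kernel generator $P_t'$ on $\mathcal{F}_t$ generates $\QQ(t)(\sqrt{d})$ over $\QQ(t)$. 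Upon specializing to $t_0 \in \QQ$ with $\mathcal{F}_{t_0}$ nonsingular, the kernel generator on $\mathcal{F}_{t_0}$ therefore generates $K$ over $\QQ$; and since $j(\mathcal{F}_{t_0}) = j(\mathcal{E}_{t_0})$ is a nonconstant function of $t_0$, all but finitely many $t_0$ give pairwise non-$\overline{\QQ}$-isomorphic curves.

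With the family in hand, I would verify the entanglement for such an $E = \mathcal{F}_{t_0}$. Put $K_3 = \QQ(E[3]) \cap \QQ^{\mathrm{ab}}$. By the Weil pairing $\QQ(\zeta_n) \subseteq \QQ(E[n])$, and $K \subseteq \QQ(\zeta_n)$ by hypothesis, so $K \subseteq \QQ(E[3]) \cap \QQ(E[n])$; as $K/\QQ$ is abelian this gives $K \subseteq K_3 \cap \QQ(\zeta_n)$. For the reverse inclusion I would use Theorem \ref{thm-coincidences}(1): $\Gal(K_3/\QQ) \cong (\ZZ/3\ZZ)^\times \times C \cong \ZZ/2\ZZ \times C$ with $|C| \mid 2$, so $[K_3:\QQ] \leq 4$. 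Since $E$ has a rational $3$-isogeny, both $\QQ(\sqrt{-3}) = \QQ(\zeta_3)$ and $K$ are abelian quadratic subfields of $\QQ(E[3])$, and they are distinct because $\QQ(\sqrt{-3}) \not\subseteq \QQ(\zeta_n)$ while $K \subseteq \QQ(\zeta_n)$; hence $K_3 = \QQ(\sqrt{-3}, \sqrt{d})$, a biquadratic field with quadratic subfields $\QQ(\sqrt{-3})$, $K$, and $\QQ(\sqrt{-3d})$. Now $\QQ(\sqrt{-3}) \not\subseteq \QQ(\zeta_n)$ because $3 \nmid n$, and $\QQ(\sqrt{-3d}) \not\subseteq \QQ(\zeta_n)$ as well, since otherwise $\sqrt{-3} \in \QQ(\sqrt{d}, \sqrt{-3d}) \subseteq \QQ(\zeta_n)$, a contradiction. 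So $K$ is the unique intermediate field of $K_3/\QQ$ lying in $\QQ(\zeta_n)$, whence $K_3 \cap \QQ(\zeta_n) = K$. As $\gcd(3,n) = 1$, this gives $\Gal(K_3 \cap \QQ(\zeta_n)/\QQ) = \Gal(K/\QQ) \cong \ZZ/2\ZZ$, which in the sense of Definition \ref{defn:weil-type} is exactly a Weil $(3,n)$-entanglement of type $\ZZ/2\ZZ$ with entanglement field $K$.

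I expect the only real friction to be in the first step: pinning down an explicit one-parameter family carrying a rational $3$-isogeny whose kernel point has coordinates in the function field $\QQ(t)$ — this is what makes the quadratic twist by $d$ move the field of definition of the kernel to $K$ uniformly, independently of $t$ — together with the routine verification that the discriminant, and the finitely many $j$-invariants at which $E$ acquires CM or extra automorphisms, exclude only finitely many specializations $t_0$. The entanglement computation in the second step is then essentially bookkeeping with Theorem \ref{thm-coincidences} and the Weil pairing; its one genuinely load-bearing ingredient is the elementary fact that $3 \nmid n$ forces $\QQ(\sqrt{-3}) \not\subseteq \QQ(\zeta_n)$, which is what pins $K_3 \cap \QQ(\zeta_n)$ down to exactly $K$ and not a larger subfield of $\QQ(E[3])$.
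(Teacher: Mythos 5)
Your proposal is correct, and it follows the same basic strategy as the paper -- start from an explicit genus-zero family with rational $3$-torsion data, quadratic-twist by $d$ to move the field of definition of the order-$3$ kernel to $K=\Q(\sqrt{d})$, and use ramification at $3$ (i.e., $3\nmid n$ forces $\Q(\sqrt{-3})\not\subseteq\Q(\zeta_n)$) to pin the entanglement field down to $K$ -- but the details differ in a way worth noting. The paper twists the Hesse cubic $\mathcal{E}_t\colon y^2=x^3-27t(t^3+8)x+54(t^6-20t^3-8)$, a model of $X(3)$ over $\Q(\sqrt{-3})$, so that $\Q(\mathcal{E}_t[3])=\Q(\sqrt{-3})$ and hence $\Q(\mathcal{E}_t^{(d)}[3])=\Q(\sqrt{-3},\sqrt{d})$ on the nose; the intersection computation is then immediate because the entire $3$-division field is this biquadratic field. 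You instead use an $X_1(3)$-type family ($y^2+txy+y=x^3$ with $(0,0)$ of order $3$), for which $\Q(E[3])$ is generically a non-abelian (degree up to $12$) extension after twisting, and you compensate with a different key lemma: Theorem \ref{thm-coincidences}(1) with $p=3$ caps $[K_3:\Q]$ at $4$, which together with the two visibly distinct quadratic subfields $\Q(\sqrt{-3})$ and $K$ forces $K_3=\Q(\sqrt{-3},\sqrt{d})$ and then $K_3\cap\Q(\zeta_n)=K$. Your route costs the extra citation but buys two small things: it works verbatim from a family with only a rational $3$-isogeny point (no need to know a model of $X(3)$), and it verifies the entanglement strictly through the quantities in Definition \ref{defn:weil-type} (namely $K_3\cap\Q(\zeta_n)$), which is cleaner than the paper's shorthand claim that $\Q(E[3])\cap\Q(E[n])=K$ -- a statement that, taken literally, would need the additional observation that $\sqrt{-3}\notin\Q(E[n])$, which the Weil-entanglement definition never requires. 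The specialization points you flag (nonvanishing of $y(P_{t_0})$, nonconstancy of $j$) are indeed routine for your family, so I see no gaps.
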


\begin{proof} Let $n>1$ and $3\nmid n$, and let $K\subseteq \Q(\zeta_n)$ be a quadratic field. Then, $K=\Q(\sqrt{d})$ for a square-free integer $d$. 
Let $t\neq 1$ be a rational number, and let $\mathcal{E}_t$ be the elliptic curve over $\QQ(t)$ given by 
\[
\mathcal{E}_t\colon y^2 = x^3 - 27t(t^3+8)x + 54(t^6-20t^3-8).
\]
The curve $\mathcal{E}_t$ is the so-called Hesse cubic, which is a model of $X(3)$ over $\Q(\sqrt{-3})$ (see \cite[Section 6.2]{GJLR}). In particular, for each $1\neq t\in\Q$, $\mathcal{E}_t$ has a rational point of order $3$ and $\Q(\mathcal{E}_t[3])=\Q(\sqrt{-3})$. Twisting $\mathcal{E}_t$ by the square-free integer $d$ results in a curve $\mathcal{E}_t^{(d)}$ 
\[
\mathcal{E}_{t}^{(d)}\colon dy^2 = x^3 - 27t(t^3+8)x + 54(t^6-20t^3-8)
\]
such that the field of definition of the point of order $3$ changing from $\QQ$ to $\QQ(\sqrt{d})=K$, and $\Q(\mathcal{E}_{t}^{(d)}[3])=\Q(\sqrt{-3},\sqrt{d})$. 
The fact that there are infinitely many $\overline{\QQ}$-isomorphism classes of elliptic curves follows from the fact that $\mathcal{E}_t$, and hence $\mathcal{E}_{t}^{(d)}$, is not isotrivial as 
\[
j(\mathcal{E}_t)=j(\mathcal{E}_{t}^{(d)}) = \frac{t^3(t+2)^3(t^2-2t+4)^3}{(t-1)^3(t^2+t+1)^3},
\]
so the $j$-invariant in the family $\mathcal{E}_{t}^{(d)}$ is non-constant. Since $3\nmid n$ and $\Q(\mathcal{E}_{t}^{(d)}[3])=\Q(\sqrt{-3},\sqrt{d})$, it follows that $\Q(\mathcal{E}_{t}^{(d)}[3])\cap \Q(\mathcal{E}_{t}^{(d)}[n])=K$. Therefore, $\mathcal{E}_{t}^{(d)}$ has a Weil $(3,n)$-entanglement of type $\ZZ/2\ZZ$, such that the entanglement field is $K$. 
\end{proof}

\subsection{Weil $(m,n)$-entanglements of $\ZZ/2\ZZ$-type}\label{subsec:explained_3,n_type2}
The techniques of Example \ref{exam:explained_3,n} and Proposition \ref{prop-3,n-entanglement} can be extended to prove the following. 

\begin{theorem}\label{thm:m,nWeilentanglements}
Let $m \in \{3,\dots, 10,12\}$, and let $n\geq 3$ be an integer such that $\gcd(m,n)\leq 2$. Then, 
\begin{enumerate}
	\item If $m\in \{3,4,5,6,7,9\}$, then there are infinitely many $\overline{\QQ}$-isomorphism classes of elliptic curves with a Weil $(m,n)$-entanglement of type $\ZZ/2\ZZ$.
	\item If $m\in \{8,10,12\}$, then there are infinitely many  $\overline{\QQ}$-isomorphism classes of elliptic curves $E$ with a Weil $(m,\gcd(4|\Delta_E|,n))$-entanglement of type $\ZZ/2\ZZ\times \ZZ/2\ZZ$.
\end{enumerate}
\end{theorem}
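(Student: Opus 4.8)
The plan is to generalize the twisting construction of Example~\ref{exam:explained_3,n} and Proposition~\ref{prop-3,n-entanglement} uniformly in $m$, using that $X_1(m)$ has genus $0$ precisely for $m\in\{1,\dots,10,12\}$ (which is why $m=11$ is absent). For each $m$ in the statement we fix an explicit non-isotrivial one-parameter family $\mathcal E_t/\Q(t)$ of elliptic curves with a rational point of order $m$, together with its $j$-map; non-constancy of $j(\mathcal E_t)$ supplies infinitely many $\overline{\Q}$-isomorphism classes in every sub-family produced below.

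I would first record two structural facts about $E=\mathcal E_t$ and its squarefree quadratic twists $E^{(d)}$. Since $E$ carries a rational $m$-torsion point, $\Im\rho_{E,m}\subseteq\left\{\begin{psmallmatrix}1 & \ast\\ 0 & \ast\end{psmallmatrix}\right\}\subseteq\GL(2,\Z/m\Z)$, so $-\Id\notin\Im\rho_{E,m}$; comparing $\rho_{E^{(d)},m}=\chi_d\otimes\rho_{E,m}$ with $\rho_{E,m}$ then yields $\Q(E^{(d)}[m])=\Q(E[m],\sqrt d)$, hence $\Q(E^{(d)}[m])\cap\Q^{\ab}=\big(\Q(E[m])\cap\Q^{\ab}\big)\cdot\Q(\sqrt d)$. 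Second, computing the abelianization of the full Borel $\left\{\begin{psmallmatrix}1 & b\\ 0 & d\end{psmallmatrix}\right\}$ of $\GL(2,\Z/m\Z)$ (the image of the generic member of $X_1(m)$) shows $\Q(E[m])\cap\Q^{\ab}=\Q(\zeta_m)$ for $m$ odd, and $\Q(E[m])\cap\Q^{\ab}=\Q(\zeta_m,\sqrt{\Delta_E})$ for $m$ even, where $\Q(\sqrt{\Delta_E})$ is the quadratic subfield of $\Q(E[2])\subseteq\Q(E[m])$ (automatically quadratic and not inside $\Q(\zeta_m)$ for full Borel image, and unchanged under twisting since $\Delta_{E^{(d)}}=d^{6}\Delta_E$).

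For part (1), fix $m\in\{3,4,5,6,7,9\}$, let $n\geq3$, set $e=\gcd(m,n)$, and choose a squarefree $d$ with $\Q(\sqrt d)\subseteq\Q(\zeta_n)$ but $\Q(\sqrt d)\not\subseteq\Q(\zeta_e)$ (using a quadratic subfield of $\Q(\zeta_p)$ for a prime $p\mid n$, $p\nmid e$, or $\Q(i),\Q(\sqrt{\pm 2})$ according to the $2$-adic valuations of $n$ and $e$; for the finitely many degenerate pairs — those with $n\mid m$, or e.g.\ $(m,n)=(9,6)$ — every quadratic subfield of $\Q(\zeta_n)$ already lies in $\Q(\zeta_e)$, the asserted entanglement is then vacuous, and nothing is required). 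Taking $t$ generic so that, when $m$ is even, the conductor of $\Q(\sqrt{\Delta_{\mathcal E_t}})$ is prime to $mn$, the structural facts together with $\Q(\zeta_m)\cap\Q(\zeta_n)=\Q(\zeta_e)$ give
\[
\big(\Q(\mathcal E_t^{(d)}[m])\cap\Q^{\ab}\big)\cap\Q(\zeta_n)=\Q(\zeta_e,\sqrt d),
\]
a quadratic extension of $\Q(\zeta_e)$, which by Definition~\ref{defn:weil-type} is a Weil $(m,n)$-entanglement of type $\Z/2\Z$, realized by the infinitely many curves $\mathcal E_t^{(d)}$.

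For part (2), fix $m\in\{8,10,12\}$, let $n\geq3$, and set $N=\gcd(4|\Delta_E|,n)$. Here I keep both quadratic pieces: after a suitable twist one has $\Q(\mathcal E_t^{(d)}[m])\cap\Q^{\ab}\supseteq\Q(\zeta_m,\sqrt{\Delta_E},\sqrt d)$, with $\Q(\sqrt{\Delta_E})\subseteq\Q(\zeta_{4|\Delta_E|})$ by Kronecker--Weber and $\Q(\sqrt d)\subseteq\Q(\zeta_n)$, so that for $d$ and $t$ chosen compatibly both quadratic fields lie inside $\Q(\zeta_N)$ while remaining independent of one another and of $\Q(\zeta_{\gcd(m,N)})=\Q(\zeta_m)\cap\Q(\zeta_N)$; intersecting with $\Q(\zeta_N)\subseteq\Q(\mathcal E_t^{(d)}[N])$ then produces a degree-$4$ extension of $\Q(\zeta_{\gcd(m,N)})$ with Galois group $\Z/2\Z\times\Z/2\Z$, i.e.\ a Weil $(m,N)$-entanglement of type $\Z/2\Z\times\Z/2\Z$. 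The main obstacle is exactly this compatible choice of parameters: one must pick $d$ and $\mathcal E_t$ so that $\Q(\sqrt d)$ and $\Q(\sqrt{\Delta_{\mathcal E_t}})$ are independent quadratic fields, both contained in $\Q(\zeta_N)$ and both linearly disjoint from $\Q(\zeta_{\gcd(m,N)})$ over $\Q$, while still retaining infinitely many admissible $t$ — and this last point reduces to showing that the sub-locus of $X_1(m)$ on which $\Delta$ has a prescribed admissible square class carries infinitely many rational points, which can be obtained by the explicit Serre-type mechanism of Remark~\ref{rem-deltaclass} and Proposition~\ref{prop:infinte_Serre} applied together with the known rational $j$-map of $X_1(m)$. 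The remainder is routine cyclotomic bookkeeping: verifying the displayed intersections and dispatching the finitely many degenerate pairs $(m,n)$.
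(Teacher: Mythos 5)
Your part (1) is a viable route, but it is genuinely different from the paper's. You work with the generic fibre of the universal family over $X_1(m)$, compute the abelianization of the full Borel-type group $\left\{\begin{psmallmatrix}1&\ast\\0&\ast\end{psmallmatrix}\right\}$, and then twist; the paper never argues generically, but instead fixes explicit families (the Hesse cubic, the $\Z/2\Z\oplus\Z/4\Z$, $\Z/2\Z\oplus\Z/6\Z$, Kubert-type families, a twisted model of $X(5)$, etc.) and pins down the mod-$m$ image for \emph{every} member, using isogeny--torsion graph classifications, so that $\Q(E[m])\cap\Q^{\ab}$ is computed for all $t$ at once. In your version the phrase ``taking $t$ generic'' is doing real work: you need Hilbert irreducibility (together with avoidance of the thin sets where the square class of $\Delta_{\mathcal E_t}$ interferes with $mn$) to guarantee infinitely many specializations retain the full Borel image -- if the image shrinks, $\Q(E[m])\cap\Q^{\ab}$ can grow and the entanglement type need not be $\Z/2\Z$. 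Your observation about the degenerate pairs (e.g.\ $n\mid m$, or $(m,n)=(9,6)$) is fair; the paper's statement silently assumes these away, and your handling is no worse.

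The genuine gap is in part (2). To produce a Weil $(m,N)$-entanglement of type $\Z/2\Z\times\Z/2\Z$ with $N=\gcd(4|\Delta_E|,n)\mid n$, you need \emph{both} $\sqrt{\Delta_E}$ and $\sqrt d$ inside $\Q(\zeta_N)$, and since $N\mid n$ this forces the conductor of $\Q(\sqrt{\Delta_{\mathcal E_t}})$ to divide $n$, i.e.\ the square class of $\Delta_t$ must lie in a prescribed finite set (determined by $n$) as $t$ ranges over the $X_1(m)$-family. Your proposed mechanism for this -- combining Remark \ref{rem-deltaclass} and Proposition \ref{prop:infinte_Serre} with the $j$-map of $X_1(m)$ -- does not deliver it: Proposition \ref{prop:infinte_Serre} produces infinitely many curves with prescribed $\Q(\sqrt{\Delta_E})$ but with no control whatsoever on the $m$-torsion or Borel structure, while imposing a fixed square class $d'$ along $X_1(m)$ is exactly the condition that $d'u^2=j_m(s)-1728$ have infinitely many rational solutions, i.e.\ infinitely many rational points on a quadratic twist of a double cover of $X_1(m)$; for $m\in\{8,10,12\}$ such covers are in general of genus at least $2$ (and may have no rational points at all), so there is no reason to expect infinitely many admissible $t$, let alone for every $n\geq 3$. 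This is precisely the difficulty the paper's formulation is designed to sidestep: rather than forcing $\sqrt{\Delta_E}$ into a cyclotomic field prescribed in advance, the paper computes $\Q(E[m])\cap\Q^{\ab}=\Q(\zeta_m,\sqrt{\Delta_t},\sqrt d)$ exactly for its explicit families (again via isogeny--torsion graph arguments that fix the mod-$m$ image) and then reads off the entanglement at the level $\gcd(4|\Delta_E|,n)$ adapted to whatever $\Delta_E$ happens to be. As written, your argument for part (2) does not establish the claim.
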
 

\begin{proof}
Let $m \in \{3,\dots, 10,12\}$. The modular curve $X_1(m)$ is of genus $0$ with a rational point, and the 1-parameter families of elliptic curves with a rational point of order $m$ can be found in the literature (see \cite{kubert}). Note that if $E/\Q$ is an elliptic curve with a rational point of order $m$, and $\rho_{m,t}$ is the Galois representation associated to $E[m]$, then the image of $\rho_{m,t}$ is contained in the subgroup
$$\left\{\begin{pmatrix} 1& a\\ 0& b \end{pmatrix}: a\in\Z/m\Z, b\in (\Z/m\Z)^\times \right\}\subseteq \GL(2,\Z/m\Z).$$
	Moreover, the bottom right corner is given by the $m$-th cyclotomic character $\chi_m$ because the determinant of $\Im(\rho_{m,t})$ is $\chi_m$. Let $n\geq 3$ be an integer and let $d$ be a square-free integer such that $\Q(\sqrt{d})\subseteq \Q(\zeta_n)$, but $\Q(\sqrt{d})\not\subset \Q(\zeta_m)$ or, equivalently, $\gcd(n,m)\leq 2$. Below, we show infinite families with the desired property:
	\begin{enumerate}
		\item If $m\in \{3,4,5,6,7,9\}$, the curves in the family satisfy $(\Q(E[n])\cap \Q(E[m]))\cap \Q^\text{ab}=\Q(\zeta_c,\sqrt{d})$ where $c=\gcd(m,n)$.
		\item If $m\in \{8,10,12\}$, the curves in the family satisfy that $(\Q(E[n'])\cap \Q(E[m]))\cap \Q^\text{ab}$ is equal to $\Q(\zeta_c,\sqrt{\Delta_E},\sqrt{d})$ where $c=\gcd(m,n')$, and $n'=\gcd(4|\Delta_E|,n)$. 
	\end{enumerate}  
	We give a list according to the value of $m$:
	\begin{itemize}
		\item \underline{$m=3$}:~pick $\mathcal{E}_{t}^{3,(d)}\colon dy^2 = x^3 - 27t(t^3+8)x + 54(t^6-20t^3-8),$ as in Proposition \ref{prop-3,n-entanglement}, so that $\Q(\mathcal{E}_{t}^{3,(d)}[3])=\Q(\sqrt{-3},\sqrt{d})$.
		\item \underline{$m=4$}:~pick $\mathcal{E}^{4,(d)}\colon y^2+xy-(d^2-1/16)y=x^3-(d^2-1/16)x^2$ for $d\neq 0,\pm 1/4$, which parametrizes elliptic curves with $E(\Q)\cong \Z/2\Z\oplus \Z/4\Z$ (see \cite[Appendix E]{lozano_little_book}). A calculation of the $4$-th division field reveals that $\Q(E[4])=\Q(i,\sqrt{d})$. 
		\item \underline{$m=5$}:~pick $\mathcal{E}_{t}^{5,(d)}$ to be the quadratic twist by $d$ of the model of $X(5)$ defined over $\Q(\zeta_5)$ given in \cite[Section 6.4.(1)]{GJLR}. Then, $\Q(\mathcal{E}_{t}^{5,(d)}[5])=\Q(\zeta_5,\sqrt{d})$.
		\item \underline{$m=6$}:~pick $\mathcal{E}_{t}^{6,(d)}$ to be quadratic twist by $d$ of the family of elliptic curves with torsion subgroup $\Z/2\Z\oplus \Z/6\Z$, as given in  \cite[Appendix E]{lozano_little_book}. An elliptic curve in the family $\mathcal{E}_{t}^6$ has full $2$-torsion defined over $\Q$,  an isogeny graph of type $S$, and an isogeny-torsion graph of type $([2,6],[2,2],[6],[2],[6],[2],[6],[2])$ following the notation of \cite[Table 4]{chiloyan2020classification}. Upon a quadratic twist by a square-free integer $d$, an elliptic curve in the family $\mathcal{E}_{t}^{6,(d)}$ has full $2$-torsion defined over $\Q$, and an isogeny-torsion graph of type $S$ with isomorphism types $([2,2],[2,2],[2],[2],[2],[2],[2],[2])$. Note that $\mathcal{E}_{t}^{6}$ has a unique rational $3$-isogeny by its isogeny-torsion graph, so the image of $\rho_{3}$ cannot be diagonalizable. This implies that $\Q(\mathcal{E}_{t}^{6}[2])=\Q$ and $\Q(\mathcal{E}_{t}^{6}[3])$ is an $S_3$ extension with quadratic subfield equal to $\Q(\sqrt{-3})$, and hence upon a twist by a square-free $d$, we see that $\Q(\mathcal{E}_{t}^{6,(d)}[2])=\Q$ and $\Q(\mathcal{E}_{t}^{6,(d)}[3])\cap \Q^\text{ab} = \Q(\sqrt{-3},\sqrt{d})$. 
		\item \underline{$m=7$}:~pick $\mathcal{E}_{t}^{7,(d)}$ to be the quadratic twist by $d$ of the family of elliptic curves with a rational $7$-torsion point, as given in \cite[Appendix E]{lozano_little_book}. In this case, one can show that $\Q(\mathcal{E}_{t}^{7,(d)}[7])\neq \Q(\mathcal{E}_{t}^{7,(d)}[7])\cap \Q^\text{ab}= \Q(\zeta_7,\sqrt{d})$ because the mod-$7$ image has to be a subgroup conjugate to $\{[1 \ \ast\ 0 \ \ast]\}\subseteq \GL(2,\Z/7\Z)$, otherwise the image would be diagonal and the curves would have two independent $7$-isogenies, which cannot occur.
		\item \underline{$m=8$}:~let $\mathcal{E}_{t}^{2\times 8}$ be the family of elliptic curves with $\Z/2\Z\oplus \Z/8\Z$-rational torsion subgroup, as in \cite[Appendix E]{lozano_little_book}, for $t\neq 0,1,1/2$. Then, $\Q(\mathcal{E}_{t}^{2\times 8}[2])=\Q(\sqrt{8t^2-8t+1})$. Note that the image of $\rho_{8}$ is contained in $\{[1 \ \ast\ 0 \ \ast]\}\subseteq \GL(2,\Z/8\Z)$ and the bottom right corner is the full $\chi_8$ cyclotomic character. Moreover, by \cite{chiloyan2020classification}, the isogeny-torsion graph must be $T_8$ with $([2,8],[8],[8],[2,4],[4],[2,2],[2],[2])$. If the top right corner in the image of $\rho_{8}$ was $0\bmod 4$, then $\mathcal{E}_{t}^{2\times 8}$ would have two independent $4$-isogenies, but that would contradict the fact that the curve $E_1$ in a $T_8$ graph does not have two such isogenies. Thus, the image of $\rho_8$ is $\{[1 \ (2\cdot \ast)\ 0 \ \ast]\}\subseteq \GL(2,\Z/8\Z)$, and a group-theoretic calculation shows that $F=\Q(\mathcal{E}_{t}^{2\times 8}[8])\cap \Q^{\text{ab}}$ is a degree $8$, tri-quadratic extension of $\Q$. It follows that $F=\Q(\zeta_8,\sqrt{8t^2-8t+1}))=\Q(i,\sqrt{2},\sqrt{8t^2-8t+1})$ where $\Q(\sqrt{\Delta})=\Q(\sqrt{8t^2-8t+1})$. 
		
		Now let $\mathcal{E}_{t}^{2 \times 8,(d)}$ be the quadratic twist by $d$. Then, $$\Q(\mathcal{E}_{t}^{2 \times 8,(d)}[8])\cap \Q^{\text{ab}}=\Q(\zeta_8,\sqrt{8t^2-8t+1},\sqrt{d})=\Q(i,\sqrt{2},\sqrt{8t^2-8t+1},\sqrt{d}).$$
		\item \underline{$m=9$}:~pick $\mathcal{E}_{t}^9$ to be the family of elliptic curves with a rational $9$-torsion point, as in \cite[Appendix E]{lozano_little_book}. Then, the image of $\rho_{3,t}$ cannot be diagonal, because in that case $\mathcal{E}_{t}^9$ would have two independent $3$-isogenies and a rational point of order $9$, which is impossible by the classification of isogeny-torsion graphs (see \cite[Tables 1-4]{chiloyan2020classification}). In particular, the image of $\rho_{9,t}$ is the subgroup $H=\{[1\ *\ 0\ *]\}\subseteq \GL(2,\Z/9\Z)$, where the lower right corner is the $9$-th cyclotomic character $\chi_9$. In particular, $H/[H,H]$ is of order $6$ and $\Q(\mathcal{E}_{t}^9[9])\cap \Q^\text{ab} =\Q(\zeta_9)$. Now, let $\mathcal{E}_{t,d}^9$ be the quadratic twist by $d$. Then, $\Q(\mathcal{E}_{t,d}^9[9])\cap \Q^\text{ab} =\Q(\zeta_9,\sqrt{d})$.
		\item \underline{$m=10$}:~pick $\mathcal{E}_{t}^{10,(d)}$ to be the quadratic twist by $d$ of the family $\mathcal{E}_{t}^{10}$ of elliptic curves with a rational $10$-torsion point, and discriminant $\Delta_t$. The image of $\rho_{10}$ for $\mathcal{E}_{t}^{10}$ is contained in $B=\{[1 \ \ast\ 0 \ \ast]\}\subseteq \GL(2,\Z/10\Z)$ where the bottom right corner is $\chi_{10}$, the full $10$-th cyclotomic character. Moreover, the top right corner cannot be $0~\pmod 2$ because the curves in the family would have rational $\Z/2\Z\times \Z/10\Z$-torsion, and it cannot be $0\bmod 5$ because then the curves would have two independent $5$-isogenies, which cannot occur because then the isogeny graph would contain a rational $50$-isogeny. Thus, the image of $\rho_{10}$ is the full group $B$, and the image for the twists is $\{[\psi \ \ast\ 0 \ \psi^{-1}\chi_{10}]\}\subseteq \GL(2,\Z/10\Z)$, where $\psi=\psi_d$ is the quadratic character corresponding to $\sqrt{d}$. Then, 
		$$\Q(\mathcal{E}_{t}^{10,(d)}[10])\cap \Q^\text{ab} = \Q(\zeta_5,\sqrt{\Delta_t},\sqrt{d}).$$
		\item \underline{$m=12$}:~pick $\mathcal{E}_{t}^{12,(d)}$ to be the quadratic twist by $d$ of the family $\mathcal{E}_{t}^{12}$ of elliptic curves with a rational $12$-torsion point, and discriminant $\Delta_t$. By \cite{chiloyan2020classification}, the isogeny-torsion graph of $\mathcal{E}_{t}^{12}$ is $S$ of type $([2,6],[2,2],[12],[4],[6],[2],[6],[2])$, and it follows that the only prime-power isogenies of such a curve are a $3$-isogeny and two $4$-isogenies that share an initial $2$-isogeny. It follows that the image of $\rho_{12}$ is $B=\{[1 \ \ast\ 0 \ \chi_{12}]\}\subseteq \GL(2,\Z/12\Z)$ where the top right corner is full (because otherwise there would be additional $4$- or $3$-isogenies). A group-theoretic calculation shows that $F=\Q(\mathcal{E}_{t}^{12}[12])\cap \Q^{\text{ab}}$ is a degree $8$, tri-quadratic extension of $\Q$. It follows that $F=\Q(\zeta_{12},\sqrt{\Delta_t}))=\Q(i,\sqrt{-3},\sqrt{\Delta_t})$ and 
		$$\Q(\mathcal{E}_{t}^{12,(d)}[12])\cap \Q^{\text{ab}}=\Q(\zeta_{12},\sqrt{\Delta_t},\sqrt{d})=\Q(i,\sqrt{-3},\sqrt{\Delta_t},\sqrt{d}).$$
	\end{itemize}
This concludes the proof of the theorem.
\end{proof}

\subsection{Weil $(5,n)$-entanglements of $\ZZ/4\ZZ$-type}\label{subsec:explained_5,n_type4}
Continuing with our discussion on Weil entanglements, the purpose of this subsection is to study what Weil $(5,n)$-entanglements of type $\ZZ/4\ZZ$ can occur by studying division fields of elliptic curves with 5-isogenies.

Let $G = B(5) \subseteq \GL(2,\ZZ/5\ZZ)$ be the Borel subgroup, so that $G/[G,G] \simeq (\ZZ/5\ZZ)^\times \times (\ZZ/5\ZZ)^\times$. 
Determining what entanglements of $\ZZ/4\ZZ$-type can occur amounts to understanding which $\ZZ/4\ZZ$-extensions of $\QQ$ can appear as subfields of the 5-division field of an elliptic curve with a 5-isogeny. To study this, we begin by analyzing which quadratic extensions can occur as subfields.

We start by fixing a model for the generic elliptic curve with a 5-isogeny. 
Let 
\[
\mathcal{E}_t\colon y^2 = x^3 - 3^3  \frac{(t^2+10t+5)^3}{(t^2+\frac{22}{25}t+\frac{1}{5})(t^2-20t-25)^2}\cdot x + 2\cdot 3^3\frac{(t^2+10t+5)^3}{(t^2+\frac{22}{25}t+\frac{1}{5})(t^2-20t-25)^2}.
\]
Then $\mathcal{E}_t$ is an elliptic curve over $\Q(t)$ with the property that every elliptic curve $E'/\QQ$ with a 5-isogeny is isomorphic to a twist of a specialization of $\mathcal{E}_t$. 
The 5-division polynomial of $\mathcal{E}_t$ has a quadratic factor whose roots define $\QQ(x(P_t))$ where $P_t$ generates the kernel of the 5-isogeny for $\mathcal{E}_t$. Using \texttt{Magma}, we check that this splitting field of this polynomial is exactly $\QQ(\sqrt{\delta(t)})$ where 
\[\delta(t) = 5\pwr{t^2 + \frac{22}{25}t + \frac{1}{5}}.\]
Notice that 
\[\delta(t) = (m(t))^2 + (n(t))^2\,\quad \hbox{ where }m(t) = t+\frac{7}{25}\hbox{ and }n(t) = 2t+\frac{24}{25}.\]
Therefore, for any $t\in\QQ$, the quadratic field inside $\QQ(P_t)$ is of the form $\QQ(\sqrt{d})$ where $d$ can be written as the sum of two squares. (Note that this condition needs to be met, since the $\Q(\sqrt{d})=\Q(x(P_t))\subseteq \Q(P_t)$ and $\Q(P_t)$ is a cyclic quartic; see Remark \ref{rmk:C4extensions}.)

Next, we turn our attention to determine whether there is a $t\in \mQ$ such that $\QQ(\sqrt{\delta(t)}) = \QQ(\sqrt{d})$ for a given $d \in \NN$ that is square-free and can be written as the sum of two squares. 
Notice that in this case, twisting will not help us because $\QQ(x(E[n]))$ is invariant under twisting. This question can be reformulated as follows: given a square-free integer $d$ that can be written as the sum of two squares, say $d = m^2+n^2$, are there always rational points on the curve
\[C_d\colon5\left(x^2+\frac{22}{25}x+\frac{1}{5}\right) = dy^2.\]
The curve $C_d$ is a genus zero curve, and we now show that $C_d(\mQ)\neq \emptyset$ when $d$ can be written as the sum of two squares. In other words, $C_d$ is isomorphic to $\mP^1$ over $\mQ$ precisely when $d$ can be written as the sum of two squares. 

\begin{lemma}\label{lem-Cd}
The curve $C_d$ has a rational point if and only if $d$ can be written as the sum of two squares. 
\end{lemma}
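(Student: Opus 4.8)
The plan is to bring $C_d$ into the shape of a conic of the form $u^2+v^2=c\,y^2$ and then read off when it has a rational point. The starting point is the identity $\delta(t)=m(t)^2+n(t)^2$ already recorded above; rescaling it (equivalently, completing the square in $\delta(x)=5x^2+\frac{22}{5}x+1$, whose discriminant is negative) yields
\[
125\,\delta(x)\;=\;(25x+11)^2+4 .
\]
Setting $u=25x+11$, a pair $(x,y)\in\Q^2$ lies on $C_d$ if and only if $u^2+4=125\,d\,y^2$. Thus $C_d$ is $\Q$-isomorphic to an affine chart of the conic $u^2+v^2=125\,d\,y^2$. Since $\delta(x)>0$ for every real $x$, it is harmless to assume $d>0$ throughout (for $d\le 0$ neither side of the claimed equivalence holds).

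Next I would reduce to a statement purely about sums of two squares. On any rational point of $u^2+v^2=125\,d\,y^2$ one necessarily has $y\neq 0$, since $125d>0$ rules out $u=v=0$; hence such a point exists if and only if $125\,d$ is a sum of two squares of rational numbers. Now $125=5^3$ and $5=1^2+2^2$, and the set of nonzero sums of two rational squares is a subgroup of $\Q^{\times}$ --- it is the image of the norm map $\Q(i)^{\times}\to\Q^{\times}$, closed under multiplication by the Brahmagupta--Fibonacci identity --- which contains every square. Therefore $125\,d$ is a sum of two rational squares precisely when $d$ is.

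Finally I would descend from $\Q$ to $\Z$ using the classical fact that a positive integer is a sum of two squares of rationals if and only if it is a sum of two squares of integers (this follows, for example, from the Davenport--Cassels lemma applied to the Euclidean form $x^2+y^2$, or directly from the characterization of the integers represented by $x^2+y^2$ via their prime factorizations). Combining the three steps: $C_d(\Q)\neq\emptyset$ if and only if $125\,d$ is a sum of two rational squares, if and only if $d$ is a sum of two rational squares, if and only if $d$ is a sum of two integer squares --- which is exactly the lemma.

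The only step demanding any care is the first, namely exhibiting the explicit substitution that converts $\delta(x)=d\,y^2$ into $u^2+v^2=125\,d\,y^2$; everything afterwards is off the shelf (multiplicativity of the sum-of-two-squares property and the rational-to-integral descent). I therefore do not expect a genuine obstacle; the one subtlety worth spelling out is that $y\neq 0$ on every rational point of the conic, which is what upgrades the passage between ``$C_d$ has a rational point'' and ``$125\,d$ is a sum of two rational squares'' from an implication to an equivalence.
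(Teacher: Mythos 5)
Your argument is correct in substance, but it takes a genuinely different route from the paper's. The paper never leaves the original variables: it rewrites the equation as $\bigl(x+\tfrac{7}{25}\bigr)^2+\bigl(2x+\tfrac{24}{25}\bigr)^2=dy^2$, reads off the forward direction exactly as you do, and for the converse takes an integer representation $d=m^2+n^2$ and solves the two \emph{linear} equations $x+\tfrac{7}{25}=my$, $2x+\tfrac{24}{25}=ny$, which hands over a completely explicit rational point on $C_d$ (the same parametrized point is then specialized in the subsequent $d=13$ example). That argument is self-contained and constructive, using nothing about sums of two squares beyond the given representation. You instead complete the square, $125\,\delta(x)=(25x+11)^2+4$, identify $C_d$ with the chart $v=2$ of the conic $u^2+v^2=125\,d\,y^2$, and then import two classical facts: that the nonzero sums of two rational squares form a subgroup of $\Q^\times$ (norms from $\Q(i)$), which lets you trade $125d$ for $d$, and the Davenport--Cassels descent from rational to integral representations. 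What your route buys is conceptual clarity and, in the forward direction, more care than the paper itself, which silently passes from ``$d$ is a sum of two rational squares'' to the integral statement; what it costs is the explicit point and the reliance on those two off-the-shelf theorems.

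One step in your converse direction needs an extra line. Knowing that the projective conic $U^2+V^2=125\,d\,Y^2$ has a rational point is not quite enough: to land in the chart $v=2$ (and hence recover $(x,y)$ from $u=25x+11$) you need a representation $125d=A^2+B^2$ with $B\neq 0$. The subtlety you flag, namely $y\neq 0$ on the conic, settles the forward direction but not this one. The gap is harmless: if the only representation at hand has $B=0$ (which happens exactly when $125d$ is a square, e.g.\ $d=5$), multiply it by $1=(3/5)^2+(4/5)^2$ using the Brahmagupta--Fibonacci identity to produce one with $B\neq 0$, or simply note that a smooth conic with one rational point has infinitely many, so some rational point avoids the line $V=0$. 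With that sentence added, your proof is complete.
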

\begin{proof}
We can rewrite the defining polynomial of $C_d$ as
\[C_d\colon \pwr{x+\frac{7}{25}}^2+\pwr{2x+\frac{24}{25}}^2 = dy^2.\]
Thus, one direction is clear: if $C_d(\Q)$ is non-empty, then $d$ can be written as a sum of two squares. Conversely, writing $d=m^2+n^2$ with $m\leq n$, we can solve the system
\[\begin{cases}
x+\frac{7}{25}= my,\\
2x+\frac{24}{25} =ny.
\end{cases}\]
Using the first equation to solve for $y$, plugging into the second and solving for $x$ we find a point
\[\left(\frac{7m - 24n}{25(2n-m)},-\frac{2}{5(2n-m)}\right)\]
on $C_d(\Q)$. 
\end{proof}
Thus, for every $d\geq 1$ that can be written as the sum of two squares, there is an elliptic curve $E/\QQ$ with a 5-isogeny, whose kernel is generated by $P$ such that $\QQ(x(P)) = \QQ(\sqrt{d})$.

\begin{remark}\label{rmk:C4extensions}
It is no surprise that the only quadratic extensions that occur as subfields of the division fields of these elliptic curves are those of the form $\QQ(\sqrt{d})$ where $d = m^2+n^2$ because these are \emph{exactly} the quadratic extensions that can be embedded into a cyclic extension of degree 4. In fact, given a $d = m^2+n^2$, one can write down explicitly all of the $\ZZ/4\ZZ$-extensions of $\QQ$ that contain $\QQ(\sqrt{d})$. By \cite[Theorem 2.2.5]{jensen2002generic}, all of these extensions are of the form
\[\QQ\left(\sqrt{r\left(d+m\sqrt{d}\right)}\right)\]
for $r\in \mQ^\times/(\mQ^{\times})^2$.
\end{remark}

To summarize, we have the following proposition.

\begin{prop} 
Let $K$ be a cyclic quartic number field, and let $n\geq 2$ be an integer not divisible by $5$ such that $K\subseteq \Q(\zeta_n)$. Then, there is an elliptic curve $E/\mQ$ with a Weil $(5,n)$-entanglement of type $\Z/4\Z$ such that the entanglement field is precisely $K$. In particular, there are infinitely many $\Qbar$-isomorphism classes of elliptic curves $E/\Q$ with a Weil $(5,n)$-entanglement of type $\Z/4\Z$.
\end{prop}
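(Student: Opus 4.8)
\textit{Proof sketch.}
The idea is to build $E$ as a quadratic twist of a suitable member of the generic $5$-isogeny family $\mathcal{E}_t$. Let $\QQ(\sqrt d)$, with $d>0$ squarefree, be the unique quadratic subfield of $K$; since $K/\QQ$ is cyclic of degree $4$ this $d$ is a sum of two squares (Remark~\ref{rmk:C4extensions}), and since $K\subseteq\QQ(\zeta_n)$ with $5\nmid n$ we have $5\nmid d$. By Lemma~\ref{lem-Cd} the conic $C_d$ is $\PP^1$ over $\QQ$, so $C_d(\QQ)$ is infinite; for each $t\in C_d(\QQ)$ with $\delta(t)\neq 0$ we get $\QQ(x(P_t))=\QQ(\sqrt{\delta(t)})=\QQ(\sqrt d)$, so $\QQ(P_t)$ is a cyclic quartic field with quadratic subfield $\QQ(\sqrt d)$. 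By Hilbert irreducibility, for $t$ outside a thin set the mod-$5$ image of $\mathcal{E}_t$ is the full Borel $B(5)\subseteq\GL(2,\FF_5)$, and by a density argument one can further require $\mathcal{E}_t$ to have good reduction at $5$ with the $5$-isogeny kernel reducing to the \'etale part, so that $\QQ(P_t)/\QQ$ is unramified at $5$.

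Now I twist to pin down $K$. Let $\psi_t\colon G_\QQ\to\FF_5^\times$ give the Galois action on $\langle P_t\rangle$; it is surjective because $[\QQ(x(P_t)):\QQ]=2$, and $\psi_t^2$ is the quadratic character of $\QQ(\sqrt d)$. Choosing a surjection $\psi_K\colon G_\QQ\to\FF_5^\times$ with fixed field $K$ (so again $\psi_K^2$ is the quadratic character of $\QQ(\sqrt d)$), the ratio $\psi_K\psi_t^{-1}$ is a quadratic (or trivial) character $\chi_r$, $r$ squarefree; since $\psi_K$ and $\psi_t$ are both unramified at $5$, so is $\chi_r$, hence $5\nmid r$. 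A quadratic twist by $r$ multiplies the mod-$5$ representation by $\chi_r$, so for $E:=\mathcal{E}_t^{(r)}$ the Galois action on the $5$-isogeny kernel is through $\chi_r\psi_t=\psi_K$; thus the kernel is defined over $K$, $E$ has good reduction at $5$, and (since $-I\in B(5)$) the mod-$5$ image of $E$ is still $B(5)$.

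Finally I compute $F:=\QQ(E[5])\cap\QQ(E[n])$. Since $[B(5),B(5)]$ is the unipotent subgroup $U(5)$, we have $\QQ(E[5])\cap\QQ^{\mathrm{ab}}=\QQ(E[5])^{U(5)}$, with Galois group over $\QQ$ cut out by $\psi_K$ and $\chi_5\psi_K^{-1}$, where $\chi_5$ is the mod-$5$ cyclotomic character; since $K$ and $\QQ(\zeta_5)$ are linearly disjoint over $\QQ$ (as $5\nmid n$), this gives $\QQ(E[5])\cap\QQ^{\mathrm{ab}}=K\cdot\QQ(\zeta_5)$, a $(\ZZ/4\ZZ)^2$-extension of $\QQ$. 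Now $F$ is Galois over $\QQ$, contains $K$ (because $K\subseteq\QQ(E[5])$ and $K\subseteq\QQ(\zeta_n)\subseteq\QQ(E[n])$), and is contained in $\QQ(E[5])$; the normal subgroups of $B(5)$ contained in the subgroup fixing a generator of the $5$-isogeny kernel form a chain of length three, so $F$ is one of $\QQ(E[5])$, $K\QQ(\zeta_5)$, $K(\sqrt 5)$, or $K$ (of degrees $[\QQ(E[5]):\QQ]$, $16$, $8$, $4$ over $\QQ$ respectively). Each of the first three contains $\QQ(\zeta_5)$ or $\QQ(\sqrt 5)$ and so is ramified at $5$, while $F\subseteq\QQ(E[n])$ is unramified at $5$ because $E$ has good reduction at $5$ and $5\nmid n$. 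Hence $F=K$. As $\gcd(5,n)=1$, this means $\Gal\big(K\cap\QQ(\zeta_n)/\QQ\big)=\Gal(K/\QQ)\cong\ZZ/4\ZZ$, so $E$ has a Weil $(5,n)$-entanglement of type $\ZZ/4\ZZ$ whose entanglement field is exactly $K$. Running $t$ over the infinitely many admissible parameters and using that $j(\mathcal{E}_t)$ is non-constant gives infinitely many $\overline{\QQ}$-isomorphism classes.

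The crux is this last step: forcing $\QQ(E[5])\cap\QQ(E[n])$ to equal $K$ on the nose rather than one of the larger abelian fields between $K$ and $K\QQ(\zeta_5)$. It rests on two genericity inputs: the mod-$5$ image being the full Borel after twisting (so the maximal abelian subextension of $\QQ(E[5])/\QQ$ is precisely $K\QQ(\zeta_5)$ and the candidate list is the short chain above), proved by Hilbert irreducibility; and $E$ having good reduction at $5$, i.e.\ a twist parameter coprime to $5$, which is what forces the extra density requirement that $\QQ(P_t)$ be unramified at $5$ and which makes the ramification-at-$5$ argument separate $K$ from the three larger candidates.
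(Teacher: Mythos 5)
Your construction of the curve is essentially the paper's: you realize the quadratic subfield $\QQ(\sqrt{d})$ of $K$ as $\QQ(x(P_t))$ via Lemma \ref{lem-Cd}, and then twist so that the character on the isogeny kernel becomes $\psi_K$. Your observation that $\chi_r=\psi_K\psi_t^{-1}$ is quadratic because $\psi_K^2=\psi_t^2=\chi_d$ is a clean substitute for the paper's appeal to the explicit form $\QQ\bigl(\sqrt{r(d+m\sqrt{d})}\bigr)$ of cyclic quartic fields from Remark \ref{rmk:C4extensions}. Where you diverge is in the verification, and that is where the gap lies. Per Definition \ref{defn:weil-type}, the Weil entanglement and its entanglement field are read off from $\bigl(\QQ(E[5])\cap\QQ^{\mathrm{ab}}\bigr)\cap\QQ(\zeta_n)$, not from $\QQ(E[5])\cap\QQ(E[n])$; once the kernel character is $\psi_K$, one gets $\QQ(E[5])\cap\QQ^{\mathrm{ab}}=K(\zeta_5)$ and then $K(\zeta_5)\cap\QQ(\zeta_n)=K$ from $5\nmid n$ alone, with no genericity hypotheses on $t$ whatsoever — this is the paper's route. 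In fact even your intermediate claim $\QQ(E[5])\cap\QQ^{\mathrm{ab}}=K(\zeta_5)$ does not need the full Borel image: for any subgroup $G$ of the Borel whose diagonal characters are $\psi_K$ and $\psi_K^{-1}\chi_5$ (which cut out linearly disjoint fields, as $K$ has conductor prime to $5$), one checks directly that $[G,G]=G\cap U$, where $U$ is the unipotent subgroup, and the fixed field of $G\cap U$ is $K(\zeta_5)$.

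The genuine gap is in the two genericity inputs on which your written argument actually rests. First, the Hilbert irreducibility step is misapplied: HIT gives full Borel image for $t$ outside a thin subset of $\QQ$, but the parameters you are permitted to use — those $t$ with $\delta(t)\in d\,(\QQ^\times)^2$ — are precisely the image of $C_d(\QQ)$ under a degree-two map to the $t$-line, hence themselves a thin set, so the statement ``outside a thin set'' yields nothing. To repair it you must pull the family back to $C_d\cong\PP^1$ and apply HIT there, after checking that the generic mod-$5$ image does not shrink under this pullback (it does not, because $\QQ(C_d)=\QQ(t)\bigl(\sqrt{d\,\delta(t)}\bigr)$ is not one of the quadratic subextensions $\QQ(t)(\sqrt{5}),\ \QQ(t)(\sqrt{\delta(t)}),\ \QQ(t)(\sqrt{5\delta(t)})$ of the generic $5$-division field for squarefree $d\neq 1,5$ — but this needs to be said). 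Second, the ``density argument'' producing $t\in C_d(\QQ)$ with good reduction at $5$ and \'etale isogeny kernel, i.e.\ $\psi_t$ unramified at $5$, is asserted without proof, and your argument uses it twice: to get $5\nmid r$ and to run N\'eron--Ogg--Shafarevich on $\QQ(E[n])$. Granted those two inputs, your normal-subgroup chain and ramification-at-$5$ argument pinning down $\QQ(E[5])\cap\QQ(E[n])=K$ is correct (and proves something stronger than the proposition), but as written the proof routes the conclusion through unproved claims, all of which can be avoided by arguing on $\QQ(E[5])\cap\QQ^{\mathrm{ab}}$ as above.
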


\begin{proof}
	Let $K$ be a cyclic quartic field, and let $F=\Q(\sqrt{d})$ be the intermediary quadratic field contained in $K$, where $d$ is a square-free integer. By our comments in Remark \ref{rmk:C4extensions}, the integer $d$ is a sum of two squares, say $d=m^2+n^2$ with $m\leq n$. Thus, Lemma \ref{lem-Cd} shows that there is a rational number $t\in\Q$ such that $\Q(\sqrt{\delta(t)})=\Q(\sqrt{d})=F$. If we let $E = \mathcal{E}_t$ be the elliptic curve over $\mQ(t)$ defined at the beginning of Section \ref{subsec:explained_5,n_type4}, then $F=\Q(x(P_t))$ where $\langle P_t \rangle$ is the kernel of a $5$-isogeny. Thus, $K' = \Q(P_t)$ is a cyclic quartic extension of $\Q$.
	
	By \cite[Theorem 2.2.5]{jensen2002generic}, there are integers $r$ and $r'$ such that 
	\[K=\QQ\left(\sqrt{r\left(d+m\sqrt{d}\right)}\right) \text{ and } K'=\QQ\left(\sqrt{r'\left(d+m\sqrt{d}\right)}\right)\]
	where $d=m^2+n^2$ as before. Let $q$ be an integer such that $q\equiv r/r' \bmod (\Q^\times)^2$. Let $E^{(q)}$ be the quadratic twist of $E$ by $q$. Then, $E^{(q)}$ also has a $\Q$-rational $5$-isogeny, with kernel $\langle P_{t,q} \rangle$ such that 
	$$\Q(x(P_{t,q}))=\Q(x(P_t))=F \text{ and } \Q(P_{t,q}) = K.$$
	It follows that $\Q(E^{(q)}[5])\cap \Q^\text{ab} = K(\zeta_5)$, and $K= K(\zeta_5)\cap \Q(\zeta_n)\subseteq \Q(\zeta_n)$ (note: we know that $\Q(\zeta_5)\cap \Q(\zeta_n)=\Q$ because $n$ is not divisible by $5$ by hypothesis). Thus, $E$ has a Weil $(5,n)$-entanglement of type $\Z/4\Z$, with entanglement field $K$  as desired.
\end{proof}

\begin{example}
Consider the case when $d=13 = 2^2 + 3^2$. Plugging in $m=2$ and $n=3$ to the parametrization from Lemma \ref{lem-Cd}, we get the point $(-27/25, 2/5)$ on 
\[
C_{13}\colon 5\left(x^2+\frac{22}{25}x+\frac{1}{5}\right) = 13y^2.
\]
Using the $x$-coordinate of this point to specialize the $j$-invariant of $\mathcal{E}_t$, we see that the minimal quadratic twist of elliptic curves in that $\Qbar$-isomorphism class is the elliptic curve with LMFDB label \href{https://www.lmfdb.org/EllipticCurve/Q/12675bn2/}{\texttt{12675.e2}}. The field of definition of the kernel of 5-isogeny of this curve is given in the LMFDB and is the field \href{https://www.lmfdb.org/NumberField/4.0.2197.1}{4.0.2197.1}, which contains $\QQ(\sqrt{13})$, as desired. 
\end{example}

\subsection{Weil $(7,n)$-entanglements of $\ZZ/6\ZZ$-type}\label{subsec:explained_7,n_type6}
We now turn our attention to studying Weil $(7,n)$-entanglements of $\ZZ/6\ZZ$-type. 
Let $\mathcal{E}_t$ be a generic elliptic curve that has a rational $7$-isogeny $\phi$, defined by
\begin{align*}
\mathcal{E}_t\colon y^2 &=  x^3 - 27(t^2+13t+49)^3(t^2+245t+2401)x\\
  &\phantom{= x^3} + 54(t^2+13t+49)^4(t^4-490t^3-21609t^2-235298t-823543). 
\end{align*}
We have found $\mathcal{E}_t$ by twisting a model of $X_0(7)$. Let $\langle P_t \rangle \subseteq \mathcal{E}_t[7]$ be the kernel of $\phi$, so $P_t$ is defined over a $\zZ{6}$-extension, namely $\mQ(P_t)/\mQ$.

Since $\mQ(P_t)/\mQ$ is a cyclic sextic, it has unique quadratic and cubic subfields that generate the whole field, and these are exactly $ \mQ(y(P_t))$ and $\mQ(x(P_t))$. 
After a suitable quadratic twist, the quadratic field $\mQ(y(P_t))$ in $\mQ(P_t)$ is arbitrary (without modifying the field of $x$-coordinates $\Q(x(P_t))$), so it suffices to determine which cubic extensions arise as $\mQ(x(P_t))$.

We compute the $7$-division polynomial of $\mathcal{E}_t$, find its lone degree $3$ factor, and compute that it defines the same field as the polynomial
\begin{align*}
g_t(x) &:= x^3 + 147(t^2 + 13t + 49)x^2 + 147(t^2 + 13t + 49)(33t^2 + 637t + 2401)x \\ &\phantom{:= x^3}+ 49(t^2 + 13t + 49)(881t^4 + 38122t^3 + 525819t^2 + 3058874t + 5764801).
\end{align*}
In particular, $g_t(x)$ defines $ \mQ(x(P_t))$. 
We now need to determine which $\zZ{3}$-extensions of $\mQ$ the polynomial $g_t(x)$ parametrizes, and somewhat surprisingly, it parametrizes all such extensions.  

\begin{lemma}
The polynomial $g_t(x)$ parametrizes all $\zZ{3}$-extensions of $\mQ$, i.e., every $\zZ{3}$-extension of $\mQ$ occurs as the Galois group of the splitting field  of $g_t(x)$, for some value of $t\in \Q$. 
\end{lemma}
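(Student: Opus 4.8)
The plan is to reduce the lemma to the surjectivity of one explicit rational function of $t$, exploiting the fact that a $\zZ{3}$-extension of $\QQ$ is pinned down by a single rational invariant. First I would record a normal form: if $f(x) = x^3 + Ax + B \in \QQ[x]$ is irreducible with $A,B \neq 0$, then its splitting field is a $\zZ{3}$-extension of $\QQ$ precisely when $-4A^3 - 27B^2 \in (\QQ^\times)^2$, and in that case the splitting field depends only on $\kappa := A^3/B^2 \in \QQ^\times$; a canonical model is $x^3 + \kappa x + \kappa$ (note $\kappa^3/\kappa^2 = \kappa$). The reason for the last assertion is that if $f' = x^3 + A'x + B'$ has the same $\kappa$, then $t_0 := B'A/(BA') \in \QQ^\times$ satisfies $t_0^2 = A'/A$ and $t_0^3 = B'/B$, so $t_0\theta$ is a root of $f'$ whenever $\theta$ is a root of $f$; since both splitting fields are Galois of degree $3$, they coincide. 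Rewriting the discriminant condition gives $-4\kappa - 27 \in (\QQ^\times)^2$, and conversely any such $\kappa$ (with $x^3 + \kappa x + \kappa$ irreducible) yields a $\zZ{3}$-field. Finally, every $\zZ{3}$-extension of $\QQ$ is the splitting field of the depressed minimal polynomial of a primitive element of trace zero, hence arises from some $\kappa$; here $-4\kappa - 27 = \mathrm{disc}/B^2$ is automatically a nonzero square. (This only repackages the classical parametrization of cyclic cubic fields, e.g. via Shanks' simplest cubics, and one could instead invoke \cite{jensen2002generic}.)

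Next I would apply this to $g_t(x)$. Completing the cube (the $x^2$-coefficient of $g_t$ is $3\cdot 49(t^2+13t+49)$) replaces $g_t$ by a depressed cubic $x^3 + A(t)x + B(t)$ with the same splitting field. Since $\QQ(P_t)/\QQ$ is a cyclic sextic for generic $t$, its unique cubic subfield $\QQ(x(P_t))$ is a $\zZ{3}$-extension, so $-4A(t)^3 - 27B(t)^2$ is a square, say $D(t)^2$, in $\QQ(t)$ (and $A(t),B(t)\not\equiv 0$, else the depressed cubic would be reducible over $\QQ(t)$). Put $\kappa(t) := A(t)^3/B(t)^2$ and $w(t) := D(t)/B(t) \in \QQ(t)$, so $-4\kappa(t) - 27 = w(t)^2$. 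By the previous paragraph the splitting field of $g_t$ is the $\zZ{3}$-field attached to $\kappa(t)$, and $\kappa(t)$ sweeps out all admissible invariants exactly when $w(t)$ is surjective onto $\QQ^\times$. Thus it suffices to show $w(t)$ takes every nonzero rational value: given a $\zZ{3}$-field with invariant $\kappa_0$ and $w_0^2 = -4\kappa_0 - 27$, any $t$ with $w(t) = w_0$ gives $\kappa(t) = \kappa_0$ and hence realizes that field, and since $w_0$ and $-w_0$ yield the same $\kappa_0$ there are at least two fibres from which to pick a $t$ avoiding the finite set where $g_t$ is reducible or $\mathcal{E}_t$ degenerates.

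So the proof comes down to computing $A(t),B(t),D(t)$ explicitly and checking that the image of $w(t)$ is all of $\QQ^\times$. I expect $w(t)$ to be a linear fractional function of $t$ whose single omitted value is $0$ — this is exactly what happens for the analogous invariant $w = -27/(2s+3)$ of Shanks' family $x^3 - sx^2 - (s+3)x - 1$ — in which case surjectivity onto $\QQ^\times$ is immediate; if instead $w(t)$ has higher degree, one verifies that each level curve $w(t) = v$ is a genus-zero curve with an obvious rational point. The main obstacle is precisely this surjectivity check: a one-parameter family of $\zZ{3}$-fields need not be exhaustive (witness Shanks' cubics, which miss fields once the parameter is restricted to $\ZZ$), so one genuinely has to confirm that the image of $w$ is all of $\QQ^\times$ and not a proper subset — and it is this verification that makes the ``somewhat surprising'' assertion of the lemma true.
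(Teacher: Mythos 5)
Your reduction is sound as far as it goes: the scaling argument showing that two depressed cubics $x^3+Ax+B$ and $x^3+A'x+B'$ with $A^3/B^2=A'^3/B'^2$ (and $A,B,A',B'\neq 0$) have proportional roots, hence equal splitting fields, is correct, as is the remark that every cyclic cubic field comes from a trace-zero primitive element; so the lemma is indeed equivalent to showing that the invariant $\kappa(t)$ of the depressed form of $g_t$ --- equivalently the rational function $w(t)$ with $w(t)^2=-4\kappa(t)-27$ --- hits every value of $\mathbb{Q}^\times$ up to sign, away from a finite bad set. The genuine gap is that you stop exactly there: you never compute $A(t)$, $B(t)$, or $w(t)$ for this particular $g_t$, never verify that $w$ is a degree-one (M\"obius) function of $t$, and never check, failing that, that the level sets $w(t)=v$ are genus-zero with a rational point. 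As you yourself concede, this surjectivity is the whole content of the lemma (a one-parameter family of $\mathbb{Z}/3\mathbb{Z}$-fields need not be exhaustive), so what you have written is a correct reformulation of the statement, not a proof of it; ``I expect $w(t)$ to be linear fractional'' is precisely the assertion that must be established for this explicit family coming from $X_0(7)$.

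For comparison, the paper does carry out the missing verification, in different clothes: a \texttt{Magma} computation shows that over $\mathbb{Q}(t)$ the cubic $g_t(x)$ cuts out the same field as the Jensen--Ledet--Yui generic polynomial $f_s(x)=x^3-sx^2+(s-3)x+1$ with $s$ a M\"obius function of $t$, so every cyclic cubic realized by some $f_s$ is realized by some $g_t$, and the single parameter value missed by the change of variable is handled by exhibiting a second $f$-parameter giving the same field. If you completed your computation (depress $g_t$, extract $w(t)$ explicitly, confirm it is M\"obius or that its fibres have rational points), your argument would be an acceptable alternative, essentially proving genericity of $g_t$ directly rather than matching it to a known generic polynomial. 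Two loose ends you would still need to tidy: the finitely many excluded $t$ (zeros or poles of $A(t),B(t)$) can collide with the at most two fibres $w(t)=\pm w_0$ over a given field, so finitely many fields may require an ad hoc check, exactly parallel to the exceptional case in the paper's proof; and note that reducibility of a specialization is not actually an obstruction in your setup, since equal $\kappa$ with $A,B\neq 0$ already forces equal splitting fields.
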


\begin{proof}
By \cite[Example on page 30]{jensen2002generic}, every $\zZ{3}$-extension of $\mQ$ occurs as the Galois group of the splitting field for some specialization of polynomial $f_t(x) = x^3 - tx^2 + (t-3)x + 1$. 
We verified (using \texttt{Magma}) that $g_t(x)$ and $f_{\frac{49}{t} + 8}(x)$ define the same degree $3$ extension of $\mQ(t)$, and so it remains to treat the case of $t=0$. We complete the proof by noting that the cyclic cubic extension obtained from $f_0(x) = x^3 - 3x + 1$ is also obtained from $f_{-20}(x)$. 
\end{proof}

We have proved the following result.

\begin{prop}
	Let $K$ be a cyclic sextic number field, and let $n\geq 2$ be an integer not divisible by $7$ such that $K\subseteq \Q(\zeta_n)$. Then, there is an elliptic curve $E/\mQ$ with a Weil $(7,n)$-entanglement of type $\Z/6\Z$, such that the entanglement field is precisely $K$. In particular, there are infinitely many $\Qbar$-isomorphism classes of elliptic curves $E/\Q$ with a Weil $(7,n)$-entanglement of type $\Z/6\Z$.
\end{prop}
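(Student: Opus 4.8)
The plan is to follow the template of the two preceding propositions (on Weil $(3,n)$- and $(5,n)$-entanglements), now feeding in the generic sextic data $g_t(x)$ and the Lemma just proved. Fix $K$ and $n$ as in the statement. Since $\Gal(K/\Q)\cong\Z/6\Z$, the field $K$ has a unique (cyclic) cubic subfield $L$ and a unique quadratic subfield $F=\Q(\sqrt{d})$ with $d$ square-free, and $K=L\cdot F$. First I would invoke the Lemma to pick $t\in\Q$ for which the splitting field of $g_t(x)$ is $L$; then, writing $E_0=\mathcal{E}_t$ and $\langle P_t\rangle\subseteq\mathcal{E}_t[7]$ for the kernel of the rational $7$-isogeny, we have $\Q(x(P_t))=L$, so $\Q(P_t)$ is a cyclic sextic field whose unique cubic subfield is $L$. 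Using the twisting freedom recorded in the discussion just before the Lemma --- namely that the quadratic subfield $\Q(y(P_t))$ of $\Q(P_t)$ may be prescribed arbitrarily without changing $\Q(x(P_t))$ --- I would replace $E_0$ by a quadratic twist $E=E_0^{(q)}$ with $\Q(y(P))=F$, where $\langle P\rangle$ now denotes the corresponding isogeny kernel. Then $\Q(P)=\Q(x(P),y(P))=L\cdot F=K$.

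Next I would verify that $E$ genuinely has a Weil $(7,n)$-entanglement with entanglement field $K$, i.e.\ that $\Q(E[7])\cap\Q(\zeta_n)=K$. One inclusion is immediate: $\Q(P)/\Q$ is abelian and $\Q(P)\subseteq\Q(E[7])$, so $K=\Q(P)\subseteq\Q(E[7])\cap\Q^{\ab}$, while $K\subseteq\Q(\zeta_n)$ by hypothesis. For the reverse inclusion the point is that the maximal abelian subextension of $\Q(E[7])$ is exactly $K(\zeta_7)$: the image $G$ of $\rho_{E,7}$ lies in a Borel subgroup $B\subseteq\GL(2,\FF_7)$, and since $E$ has a unique rational $7$-isogeny (see below), $G$ is not conjugate into the diagonal torus, hence contains the unipotent radical $U$ of $B$; as $\det\rho_{E,7}$ is surjective, a short computation gives $[G,G]=U$, and the fixed field of $U$ inside $\Q(E[7])$ is $\Q(P)\cdot\Q(\zeta_7)=K(\zeta_7)$. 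Therefore $\Q(E[7])\cap\Q(\zeta_n)=K(\zeta_7)\cap\Q(\zeta_n)=K$, the last equality being a direct computation using $\gcd(7,n)=1$ (so that $\Gal(\Q(\zeta_{7n})/\Q)$ splits as a product). Finally, since the $j$-invariant of $\mathcal{E}_t$ is non-constant, letting $t$ vary over the specializations with the prescribed splitting field and adjusting $q$ accordingly (as in the proof of the Weil $(5,n)$ proposition) produces infinitely many $\Qbar$-isomorphism classes with the asserted entanglement.

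The hard part will be the structural claim in the second paragraph: after the specialization dictated by the Lemma and the subsequent twist, one must make sure the mod-$7$ image has not degenerated --- that $E$ still has exactly one rational $7$-isogeny and retains the full unipotent part --- so that $\Q(E[7])\cap\Q^{\ab}$ is precisely $K(\zeta_7)$ and nothing larger. This, together with checking that $g_t$ realizes $L$ for enough values of $t$ to avoid the finitely many degenerate specializations and to keep the family infinite, is handled by the same division-polynomial and modular-curve bookkeeping already used for the $(3,n)$- and $(5,n)$-cases, and introduces no essentially new difficulty.
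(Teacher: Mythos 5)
Your proposal is correct and follows essentially the same route as the paper, which (after the Lemma on $g_t$) realizes the prescribed cyclic cubic as $\Q(x(P_t))$, adjusts the quadratic subfield by a suitable quadratic twist so that $\Q(P)=K$, and then concludes exactly as in the $(5,n)$ case that $\Q(E[7])\cap\Q^{\ab}=K(\zeta_7)$ and $K(\zeta_7)\cap\Q(\zeta_n)=K$; your Borel/commutator computation simply makes explicit what the paper leaves implicit. The ``unique $7$-isogeny'' verification you defer is in fact not needed for the entanglement-field claim: even if the mod-$7$ image were diagonalizable, both diagonal characters are products of the isogeny character (cutting out $K$) and $\chi_7$, so $\Q(E[7])\subseteq K(\zeta_7)$ and the intersection with $\Q(\zeta_n)$ is forced to be $K$ in either case.
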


In Section \ref{sec:concludingremarks}, we speculate as to whether these types of results can be extended to find other families of Weil $(m,n)$-entanglements of $\zZ{a}$-type where $a$ is a positive integer greater than six.

\section{\bf CM entanglements}
\label{sec:explainedCM}
The goal of this section is to classify the image of the adelic representations attached to elliptic curves over $\mQ$ with CM by understanding the entanglements of their division fields.
To do this, we first compute the adelic image of a representative set of curves over $\QQ$ and then describe how the adelic image changes under twisting. 
Throughout this section, let $E/\QQ$ be an elliptic curve with CM by the order of an imaginary quadratic extension $K/\mQ$ of conductor $f$, which we shall denote by $\OO_{K,f}$.

To start, from \cite[Theorem 1.2]{lozanoCMreps}, if we define $\delta$ and $\phi$ as in Definition \ref{defn-CMimage} and let 
\[
\mathcal{N}_{\delta,\phi}(\widehat{\Z}) = \varprojlim_{n} \mathcal{N}_{\delta,\phi}(n),
\]
then there is a compatible system of bases for $E[n]$ such that the image of $\rho_{E}$ is contained in $\mathcal{N}_{\delta,\phi}(\widehat{\Z})$, and the index $[\mathcal{N}_{\delta,\phi}(\widehat{\Z}):\Im(\rho_{E})]$ divides $|\OO_{K,f}^\times|$. 
Before discussing the results, we highlight the difference between entanglements of division fields of elliptic curves with CM and elliptic curves without CM. 
Indeed, by \cite[Lemma 3.15]{BCSTorPointsOnCM}, we have that for $E$ with CM by $\cO_{K,f}$ and $n\geq 3$, the field $K$ is always contained in the $n$-division field $ \Q(E[n])$.
This is something that cannot happen for a curve without CM since in this case $[\GL(2,\widehat{\ZZ}): \Im(\rho_E)]$ is finite by Serre's open image theorem. 

We note that the group $\mathcal{N}_{\delta,\phi}(\widehat{\Z})$ already accounts for these entanglements in the following sense. If we put $ \cC_{\delta,\phi}(\widehat{\Z})=\varprojlim \cC_{\delta,\phi}(n)$, then for an elliptic curve over $\Q$ with CM by $K$, we have that the image of $\rho_E|_K:\Gal(\overline{\QQ}/K)\to \mathcal{N}_{\delta,\phi}(\widehat{\Z})$ is in fact contained in $ \cC_{\delta,\phi}(\widehat{\Z})$ and we still have that the index $[\mathcal{\cC}_{\delta,\phi}(\widehat{\Z}):\Im(\rho_{E}|_K)]$ divides $|\OO_{K,f}^\times|$ as before. 
Hence, the CM entanglement of Section \ref{sec-CM-entanglements} is already ``baked into'' the definition of $\mathcal{N}_{\delta,\phi}(\widehat{\Z})$, and if the index of the image in $\mathcal{N}_{\delta,\phi}(\widehat{\Z})$ is larger than $1$, then there must be an additional entanglement beyond the CM entanglement.

From the above result of the second author, we deduce one way to determine the adelic image of an elliptic curve with CM. 
If we can show that there exists an $n_0\in \NN$ such that $[\mathcal{N}_{\delta,\phi}(n_0) : \Im(\rho_{E,n_0})] = |\OO_{f,K}^\times|$, then $\Im(\rho_{E}) = \pi_{n_0}^{-1}(\Im(\rho_{E,n_0}))$ where $\pi_{n_0}\colon\mathcal{N}_{\delta,\phi}(\widehat{\mZ})\to \mathcal{N}_{\delta,\phi}(n_0)$ is the standard component wise reduction mod $n_0$ map.

Our first theorem handles the case when $E/\Q$ has CM by an order $\OO_{K,f}$ of $K = \QQ(\sqrt{-p})$ for some odd $p$ and $|\OO_{K,f}^{\times}| = 2.$

\begin{theorem}\label{prop:CM_Image}
	Let $E/\Q$ be an elliptic curve with CM by an order $\OO_{K,f}$ in an imaginary quadratic field $K$ with $\Delta_K\neq -4,-8$ and $j(E)\neq 0$. Choose compatible bases of $E[n]$, for each $n\geq 2$, such that the image of $\rho_{E}$ is contained in $\mathcal{N}_{\delta,\phi}(\widehat{\Z})$.  Then, the index of the image of $\rho_{E}$ in $\mathcal{N}_{\delta,\phi}(\widehat{\Z})$ is exactly $2$. 
\end{theorem}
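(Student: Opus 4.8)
The plan is to show that the index divides $2$ and then that it cannot equal $1$. For the divisibility, note that under the stated hypotheses $|\mathcal{O}_{K,f}^\times|=2$: the condition $\Delta_K\neq -4$ rules out $K=\mathbb{Q}(i)$, the condition $j(E)\neq 0$ rules out the maximal order of $\mathbb{Q}(\sqrt{-3})$ (the only imaginary quadratic order with $\Delta_K=-3$ having extra roots of unity), and every remaining order has unit group $\{\pm 1\}$. Hence by \cite[Theorem 1.2]{lozanoCMreps} the index $[\mathcal{N}_{\delta,\phi}(\widehat{\mathbb{Z}}):\Im(\rho_E)]$ divides $|\mathcal{O}_{K,f}^\times|=2$, so it is $1$ or $2$, and it remains only to rule out $1$.

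To rule out $1$, I would use the observation recorded just before the statement of the theorem: it suffices to exhibit a single modulus $n_0$ with $[\mathcal{N}_{\delta,\phi}(n_0):\Im(\rho_{E,n_0})]=2$, since then $\Im(\rho_E)=\pi_{n_0}^{-1}(\Im(\rho_{E,n_0}))$ and the adelic index equals $2$. The obstruction to surjectivity that I would exploit is the discriminant (Serre) entanglement of Section \ref{subsec:serre_entangles}. Because $\Delta_K\neq -4$ forces $j(E)\neq 1728$, the discriminant $\Delta_E$ is not a square in $\mathbb{Q}^\times$, so $\mathbb{Q}(\sqrt{\Delta_E})$ is a genuine quadratic subfield of $\mathbb{Q}(E[2])$; by the Weil pairing and Kronecker--Weber we get $\mathbb{Q}(\sqrt{\Delta_E})\subseteq \mathbb{Q}(E[2])\cap \mathbb{Q}(\zeta_m)$ with, say, $m=4|\Delta_E|$. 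Choosing $n_0$ divisible by $6m$ and by $f|\Delta_K|$ (so that also $K\subseteq \mathbb{Q}(\zeta_{n_0})$), I would argue that $\mathcal{N}_{\delta,\phi}(n_0)$ does not itself represent this $(2,m)$-entanglement; granting that, if $\Im(\rho_{E,n_0})$ equalled $\mathcal{N}_{\delta,\phi}(n_0)$ then $\mathbb{Q}(E[n_0])$ would carry no such entanglement, contradicting $\mathbb{Q}(\sqrt{\Delta_E})\subseteq \mathbb{Q}(E[2])\cap\mathbb{Q}(\zeta_m)$, so $\Im(\rho_{E,n_0})$ is proper in $\mathcal{N}_{\delta,\phi}(n_0)$ and we are finished.

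The crux --- and the step I expect to be the main obstacle --- is therefore the claim that this $(2,m)$-entanglement is genuinely ``extra'', i.e.\ that the only entanglement already represented by $\mathcal{N}_{\delta,\phi}(\widehat{\mathbb{Z}})$ is the CM entanglement of Section \ref{sec-CM-entanglements}, namely the presence of $K$ in the division fields of level $\geq 3$. In group-theoretic terms this amounts to showing that reduction modulo $2$ maps $\mathcal{N}_{\delta,\phi}(n_0)\cap \SL_2(\mathbb{Z}/n_0\mathbb{Z})$ onto $\mathcal{N}_{\delta,\phi}(2)$, so that the group alone forces no proper quadratic subfield into $\mathbb{Q}(E[2])\cap\mathbb{Q}^{\mathrm{ab}}$ (in particular it forces $\mathbb{Q}(E[2])\cap\mathbb{Q}(E[3])=\mathbb{Q}$, which also disposes of the borderline case $\mathbb{Q}(\sqrt{\Delta_E})=K$). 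Since $\det$ restricted to the Cartan $\mathcal{C}_{\delta,\phi}$ is the norm form of $\mathcal{O}_{K,f}$, this reduces to surjectivity of local norm maps on unit groups, the delicate places being the primes dividing $\Delta_Kf^2$ and the prime $2$ (whose behaviour is governed by the splitting of $2$ in $\mathcal{O}_{K,f}$ and by $\Delta_Kf^2\bmod 4$); I would expect to carry this out as a finite case analysis on the structure of $\mathcal{N}_{\delta,\phi}(2)$ and of the abelianization of $\mathcal{N}_{\delta,\phi}(n_0)$ relative to the cyclotomic character. A clean alternative would be to argue directly via the algebraic Hecke character $\psi_{E/K}$, whose restriction to $\widehat{\mathbb{Z}}^\times$ is the quadratic character cutting out $K/\mathbb{Q}$, at the cost of some bookkeeping with the conductor and class number of $\mathcal{O}_{K,f}$.
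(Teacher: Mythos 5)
Your opening step (the index divides $2$ because $|\OO_{K,f}^\times|=2$ under these hypotheses, by \cite[Theorem 1.2]{lozanoCMreps}) coincides with the paper's. The gap is exactly the step you flag as the crux, and it is not merely delicate --- it is false: the Serre entanglement cannot rule out index $1$ here, because it is already represented by $\mathcal{N}_{\delta,\phi}(\widehat{\Z})$. The point is that $\Q(\sqrt{\Delta_E})=\Q(\sqrt{j(E)-1728})$ is invariant under quadratic twisting, so it depends only on the CM order. For the odd discriminants allowed by the theorem ($\Delta_Kf^2=-7,-11,-19,-27,-43,-67,-163$) one checks that $j_{K,f}-1728\equiv \Delta_Kf^2$ modulo squares, so $\Q(\sqrt{\Delta_E})=K$ and your $(2,m)$-entanglement is precisely the coincidence that is ``baked into'' the normalizer; for the even discriminants $-12$ and $-28$ one gets $\Q(\sqrt{3})$, resp.\ $\Q(\sqrt{7})$, and a short computation shows the corresponding constraint again holds on all of $\mathcal{N}_{\delta,\phi}(\widehat{\Z})$. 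Concretely, writing $\mathcal{N}_{\delta,\phi}(\widehat{\Z})=\cC_{\delta,\phi}(\widehat{\Z})\rtimes\langle c\rangle$ with $c$ the distinguished involution, the character $g\mapsto \mathrm{sign}(g \bmod 2)$, whose kernel cuts out the quadratic subfield of the $2$-division field, factors through $\det$ via exactly the Dirichlet character attached to $\Q(\sqrt{j_{K,f}-1728})$: e.g.\ for $\Delta_K=-7$, $f=1$, every Cartan element reduces to the identity mod $2$ and has determinant a norm, hence a square mod $7$, while every element of the coset $\cC_{\delta,\phi}(\widehat{\Z})c$ has sign $-1$ and determinant $(-1)\cdot(\text{norm})$, and $\epsilon_{-7}(-1)=-1$. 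Consequently the mod-$2$ image of $\mathcal{N}_{\delta,\phi}(n_0)\cap \SL(2,\Z/n_0\Z)$ is a \emph{proper} subgroup of $\mathcal{N}_{\delta,\phi}(2)$ as soon as the ramified primes divide $n_0$ --- the opposite of the surjectivity you propose to prove --- and likewise $\mathcal{N}_{\delta,\phi}(6)$ already represents a $(2,3)$-entanglement of type $\Z/2\Z$, so the parenthetical claim that the group forces $\Q(E[2])\cap\Q(E[3])=\Q$ also fails. Assuming $\Im\rho_E=\mathcal{N}_{\delta,\phi}(\widehat{\Z})$ therefore produces no contradiction with the discriminant entanglement, and your argument cannot separate index $1$ from index $2$.

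What is needed is a quadratic field that \emph{varies with the twist}, and that is how the paper proceeds, working at the odd ramified prime $p\mid \Delta_K$ (which is also where the exclusion $\Delta_K=-8$ enters). For the finitely many curves of Table \ref{table:CMjnvariant} the drop is vertical at $p$: the kernel of the rational $p$-isogeny is defined over a field of degree $(p-1)/2$, or of degree $p-1$ whose quadratic subfield is the one inside $\Q(\zeta_p)$, so $\Im\rho_{E,p}$ already has index $2$ in $\mathcal{N}_{\delta,\phi}(p)$. Any other curve in the theorem is the quadratic twist by a squarefree $d\neq 1$ prime to $p$ of a table curve; then $\rho_{E,p}$ is onto $\mathcal{N}_{\delta,\phi}(p)$, but $\Q(\sqrt{d})\subseteq \Q(E[p])\cap\Q(E[n])$ with $n$ the conductor of $\Q(\sqrt{d})$ prime to $p$ and $\Q(\sqrt{d})\neq K$, an entanglement the normalizer does not represent, which forces index $2$ at level $pn$ and hence adelically. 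If you wish to salvage your outline, replace the $2$-division field and $\Delta_E$ by the $p$-division field and the twisting field; the $2$-torsion/discriminant data alone are twist-independent and already accounted for by $\mathcal{N}_{\delta,\phi}(\widehat{\Z})$.
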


\begin{proof}
Let $E/\Q$ be an elliptic curve with CM by an order $\OO_{K,f}$ in $K$  with $\Delta_K\neq -4,-8$ and $j(E) = j_{K,f}\neq 0$, let $\delta$ and $\phi$ be chosen as in Definition \ref{defn-CMimage}, and fix a system of compatible bases for $E[n]$ such that the image of $\rho_{E,n}$ is contained in $\mathcal{N}_{\delta,\phi}(n)$. 
Part (1) of \cite[Theorem 1.2]{lozanoCMreps} gives us that the index of the image of $\rho_{E,n}$ inside $\mathcal{N}_{\delta,\phi}(n)$ is a divisor of the order of $\OO_{K,f}^\times$. 
From the hypotheses, we have that $\OO_{K,f}^\times = \{\pm 1 \}$ (see, for instance, \cite[Lemma 4.2]{lozanoCMreps}), which implies that $[\mathcal{N}_{\delta,\phi}(n) :\im(\rho_{E,n})]$ is $1$ or $2$, and, in fact, we have $j_{K,f}\neq 0,1728$ and so $j_{K,f}$ is one of the $j$-invariants that appear in Table \ref{table:CMjnvariant}.

\begin{table}[h!]
	\begin{tabular}{lccccc}
		& & & & \\
		$j$-invariant & $\Delta_K$ & $f$ &  $N_E$ & LMFDB Label(s) \\
		\toprule %
		\multirow{1}*{$j = 2^4\cdot 3^3\cdot 5^3$}  & \multirow{2}*{$-3$} & $2$ & $2^2\cdot 3^2$ & \href{https://www.lmfdb.org/EllipticCurve/Q/36/a/1}{\texttt{36.a1}}, \href{https://www.lmfdb.org/EllipticCurve/Q/36/a/2}{\texttt{36.a2}} & \\
		\cmidrule(l){3-6}
		\multirow{1}*{$j = 	-2^{15}\cdot 3\cdot 5^3$}  &   & $3$ & $3^3$ & \href{https://www.lmfdb.org/EllipticCurve/Q/27/a/1}{\texttt{27.a1}},  \href{https://www.lmfdb.org/EllipticCurve/Q/27/a/2}{\texttt{27.a2}} \\
		
		\midrule
		\multirow{1}*{$j = -3^3\cdot 5^3$}  & \multirow{2}*{$-7$} & $1$ & $7^2$ & \href{https://www.lmfdb.org/EllipticCurve/Q/49/a/2}{\texttt{49.a2}}, \href{https://www.lmfdb.org/EllipticCurve/Q/49/a/4}{\texttt{49.a4}} & \\
		\cmidrule(l){3-6}
		\multirow{1}*{$j = 	3^3\cdot 5^3\cdot 17^3$}  &   & $2$ & $7^2$ & \href{https://www.lmfdb.org/EllipticCurve/Q/49/a/1}{\texttt{49.a1}},  \href{https://www.lmfdb.org/EllipticCurve/Q/49/a/3}{\texttt{49.a3}} \\
		\midrule
		\multirow{1}*{$j = -2^{15}$} & $-11$ & $1$ & $11^2$ &   \href{https://www.lmfdb.org/EllipticCurve/Q/121/b/1}{\texttt{121.b1}}, \href{https://www.lmfdb.org/EllipticCurve/Q/121/b/2}{\texttt{121.b2}} \\
		\midrule
		\multirow{1}*{$j = -2^{15}\cdot 3^3$}  & $-19$ & $1$ &  $19^2$  & \href{https://www.lmfdb.org/EllipticCurve/Q/361/a/1}{\texttt{361.a1}}, \href{https://www.lmfdb.org/EllipticCurve/Q/361/a/2}{\texttt{361.a2}} \\
		\midrule
		\multirow{1}*{$j = -2^{18} \cdot 3^3 \cdot 5^3$}  & $-43$ & $1$ &  $43^2$  & \href{https://www.lmfdb.org/EllipticCurve/Q/1849/b/1}{\texttt{1849.b1}}, \href{https://www.lmfdb.org/EllipticCurve/Q/1849/b/2}{\texttt{1849.b2}} \\
		\midrule
		\multirow{1}*{$j = -2^{15}\cdot 3^3\cdot 5^3\cdot 11^3$}  & $-67$ & $1$ &  $67^2$  & \href{https://www.lmfdb.org/EllipticCurve/Q/4489/b/1}{\texttt{4489.b1}}, \href{https://www.lmfdb.org/EllipticCurve/Q/4489/b/2}{\texttt{4489.b2}} \\
		\midrule
		\multirow{1}*{$j = -2^{18}\cdot 3^3\cdot 5^3\cdot 23^3\cdot 29^3$}  & $-163$ & $1$ &  $163^2$  & \href{https://www.lmfdb.org/EllipticCurve/Q/26569/a/1}{\texttt{26569.a1}}, \href{https://www.lmfdb.org/EllipticCurve/Q/26569/a/2}{\texttt{26569.a2}}\\
		\bottomrule
	\end{tabular}
	\vspace*{.5em}
	\caption{{{Elliptic curves with CM and non-maximal image for $p\mid\Delta_K$}}}
	\label{table:CMjnvariant}
	\end{table}

Thus, $E$ is a quadratic twist of one of the curves $E'/\Q$ whose LMFDB labels appear listed in Table \ref{table:CMjnvariant} (see Lemma 9.6 of \cite{lozano2013field}). 
Let $p>2$ the unique prime that divides $\Delta_K$. 
The elliptic curves $E'/\Q$ listed by their LMFDB label in Table \ref{table:CMjnvariant} are chosen among all those elliptic curves over $\Q$ with $j=j_{K,f}$ such that the image of the $p$-adic representation $\rho_{E',p^\infty}$ has index $2$ within $\mathcal{N}_{\delta,\phi}(p^\infty)$. 

Each curve $E'/\QQ$ listed in Table \ref{table:CMjnvariant} has a vertical collapsing that can be explained by one of two possible phenomena. 
In all cases, $E'$ has a $p$-isogeny (where again $p$ is the unique odd prime dividing $\Delta_K$). 
Let $M/\QQ$ be the field of definition of a generator of the kernel of the $p$-isogeny with domain $E$. 
Then the first possibility is that $[M:\QQ] = (p-1)/2$, which 
forces the image of $\rho_{E,p}$ to be strictly smaller than $\mathcal{N}_{\delta,\phi}(p)$.
The other possibility is that $[M:\QQ] = (p-1)$, but the unique quadratic subfield of $M$ coincides with the unique quadratic subfield of $\QQ(\zeta_p)$.
Since $\QQ(\zeta_p)$ is always inside $\QQ(E'[p])$ by the Weil pairing, this coincidence forces the image of $\rho_{E,p}$ to be smaller than $\mathcal{N}_{\delta,\phi}(p)$. 

Let $d\in \ZZ$ be the unique square-free integer not divisible by $p$ such that the quadratic twist of $E$, $E^{(d)}$, is $\QQ$-isomorphic to one of the curves listed in Table \ref{table:CMjnvariant}, call it $E'$. 
We can choose $d$ not divisible by $p$ since the pairs of curves listed are quadratic twists of each other by $-p$. 
If $d = 1$, then we are done. Otherwise, we have that $\QQ(E[p]) = \QQ(E'[p],\sqrt{d})$ and $\rho_{E,p}$ surjects onto $\mathcal{N}_{\delta,\phi}(p).$ 
Next we let $n$ be the conductor of the extension $\QQ(\sqrt{d})/\QQ$ and note that since $d$ is not divisible by $p\geq 3$, we have that $n$ is not divisible by $p$, and $K\neq \Q(\sqrt{d})$. 
Thus, we have that $\gcd(n,p) = 1$ and by the Weil pairing and definition of conductor, $\QQ(\sqrt{d}) \subseteq \QQ(E[p]) \cap \QQ(E[n])$. Since $\QQ(\sqrt{d})\neq K$, this is not a CM entanglement as in Definition \ref{defn:CM-entanglement}. 
This entanglement together with \cite[Theorem 1.2]{lozanoCMreps} gives us that $1< [\mathcal{N}_{\delta,\phi}(pn) : \Im(\rho_{E,pn})]\leq 2$. Thus, $[\mathcal{N}_{\delta,\phi}(pn) : \Im(\rho_{E,pn})] = 2$ and $\Im(\rho_{E}) = \pi_{pn}^{-1}(\Im(\rho_{E,pn}))$ where $\pi_{pn}\colon\mathcal{N}_{\delta,\phi}(\widehat{\Z})\to \mathcal{N}_{\delta,\phi}(pn)$ is the standard component wise reduction mod $pn$ map. 
\end{proof}

\begin{prop}
	Let $E/\QQ$ be an elliptic curve with CM by an order $\OO_{K,f}$ where $K = \QQ(\sqrt{-2})$. Then, the index of the image of $\rho_{E}$ in $\mathcal{N}_{\delta,\phi}(\widehat{\Z})$ is $2$ .
\end{prop}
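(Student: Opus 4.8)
The plan is to reduce the statement to the behaviour of $\rho_E$ over $K=\mathbb{Q}(\sqrt{-2})$ and then to invoke the main theorem of complex multiplication, rather than tracking division fields directly. Since $\Delta_K=-8$, every order $\mathcal{O}_{K,f}$ has unit group $\{\pm 1\}$, so \cite[Theorem 1.2]{lozanoCMreps} already gives $[\mathcal{N}_{\delta,\phi}(\widehat{\mathbb{Z}}):\Im(\rho_E)]\mid 2$, and the only thing to prove is that the index is not $1$. Moreover $j(E)\in\mathbb{Q}$ forces the CM order to have class number one; as the maximal order is the unique such order in $\mathbb{Q}(\sqrt{-2})$ we have $f=1$, $j(E)=2^6\cdot 5^3$, $\delta=-2$ and $\phi=0$, so $\mathcal{C}_{\delta,\phi}(\widehat{\mathbb{Z}})$ may be identified with the abelian group $\widehat{\mathcal{O}_K}^{\times}$.

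First I would pass to $K$. The quadratic character $G_{\mathbb{Q}}\to\mathcal{N}_{\delta,\phi}(\widehat{\mathbb{Z}})/\mathcal{C}_{\delta,\phi}(\widehat{\mathbb{Z}})\cong\mathbb{Z}/2\mathbb{Z}$ induced by $\rho_E$ cuts out $K$ (this is the standard fact that the Cartan part corresponds to $G_K$), so $\Im(\rho_E)\cap\mathcal{C}_{\delta,\phi}(\widehat{\mathbb{Z}})=\Im(\rho_E|_K)$ has index $2$ in $\Im(\rho_E)$ because $K\neq\mathbb{Q}$, whence
\[
[\mathcal{N}_{\delta,\phi}(\widehat{\mathbb{Z}}):\Im(\rho_E)]=[\mathcal{C}_{\delta,\phi}(\widehat{\mathbb{Z}}):\Im(\rho_E|_K)],
\]
and it suffices to show that $\Im(\rho_E|_K)$ has index exactly $2$ in $\mathcal{C}_{\delta,\phi}(\widehat{\mathbb{Z}})\cong\widehat{\mathcal{O}_K}^{\times}$.

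For the main step I would use CM theory. Since the CM endomorphisms of $E$ are defined over $K$, the image $\Im(\rho_E|_K)$ commutes with $\mathcal{O}_K$ and hence lies in $\mathcal{C}_{\delta,\phi}(\widehat{\mathbb{Z}})\cong\widehat{\mathcal{O}_K}^{\times}$; in particular $K(E_{\mathrm{tors}})/K$ is abelian, so $K(E_{\mathrm{tors}})\subseteq K^{\mathrm{ab}}$. On the other hand $\mathcal{O}_K^{\times}=\{\pm 1\}$ acts trivially on $x$-coordinates, so the Weber function of $E$ is a $K$-scalar multiple of the $x$-coordinate, and the main theorem of complex multiplication (using $h_K=1$, so that $j(E)\in K$ contributes nothing, and with $K(x(E[n]))$ the ray class field of conductor $n\mathcal{O}_K$ at finite level) gives $K(x(E_{\mathrm{tors}}))=K^{\mathrm{ab}}$. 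Combining, $K(E_{\mathrm{tors}})=K(x(E_{\mathrm{tors}}))$. Now an element of $G_K$ acting on $T(E)$ as $-I$ fixes every $x$-coordinate of a torsion point, hence fixes $K(x(E_{\mathrm{tors}}))=K(E_{\mathrm{tors}})$, and so is trivial in $\Gal(K(E_{\mathrm{tors}})/K)$; since $-I\neq I$ and $\rho_E|_K$ is faithful this forces $-I\notin\Im(\rho_E|_K)$. Thus $\Im(\rho_E|_K)$ is a proper subgroup of $\mathcal{C}_{\delta,\phi}(\widehat{\mathbb{Z}})$ (which contains $-I$), so its index is at least $2$; together with the divisibility from the previous paragraph it is exactly $2$, and the proposition follows.

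The point requiring the most care is the precise invocation of the main theorem of complex multiplication: one must verify $K(h_E(E_{\mathrm{tors}}))=K^{\mathrm{ab}}$ for our curve (with $h_E$ the Weber function, using $h_K=1$), that $h_E$ agrees with the $x$-coordinate up to a scalar in $K$ exactly because $\Delta_K<-4$, and that restricting $E$ from $\mathbb{Q}$ to $K$ disturbs none of this; it is precisely the equality $h_E=x$ (available here, but not when $j(E)\in\{0,1728\}$) that lets one pin the index down. As a more hands-on alternative one can imitate the proof of Theorem \ref{prop:CM_Image} with the ramified prime $2$ in place of an odd $p$: reduce to the quadratic twists of a fixed curve $E_0/\mathbb{Q}$ with $j(E_0)=8000$, compute $\Im(\rho_{E_0,2^k})$ in \texttt{Magma} for small $k$, and for nontrivial twists produce a non-CM entanglement $\mathbb{Q}(\sqrt{d})\subseteq\mathbb{Q}(E[2^k])\cap\mathbb{Q}(E[n])$ via the Weil pairing — but since $2$ behaves less uniformly under twisting than an odd prime (one must separately handle the twist by $-2$, for which $\mathbb{Q}(\sqrt{d})=K$), I would prefer the CM-theoretic route above.
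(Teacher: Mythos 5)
Your argument is correct, but it is a genuinely different proof from the one in the paper. The paper's proof is computational and twist-by-twist: it reduces to the four explicit curves $E_1,\dots,E_4$ with CM by $\Z[\sqrt{-2}]$ (a complete set of twists by $-1,2,-2$), verifies with \texttt{Magma} that $[\QQ(E_i[8]):\QQ]=32$ while $|\mathcal{N}_{\delta,\phi}(8)|=64$ to settle those representatives, and then, for a twist by an odd positive square-free $d$, computes $\QQ(E[4])=FL_d$ with $L_d=\QQ\bigl(\sqrt{d(2+\sqrt{2})}\bigr)$ and uses a conductor argument to exhibit a non-CM entanglement $\QQ(\sqrt{d})\subseteq\QQ(E[16])\cap\QQ(E[n])$, which pins the index down at level $16n$ and in fact yields the explicit description $\Im(\rho_E)=\pi_{16n}^{-1}(\Im(\rho_{E,16n}))$. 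You instead pass to $K$, identify $[\mathcal{N}_{\delta,\phi}(\widehat{\Z}):\Im(\rho_E)]$ with $[\cC_{\delta,\phi}(\widehat{\Z}):\Im(\rho_E|_K)]$, and use the main theorem of complex multiplication (valid here since $j(E)\in\QQ$ forces $f=1$, and the Weber function is a rational multiple of $x$ because $j\neq 0,1728$, and is moreover twist-invariant) to get $K(E_{\tors})=K(x(E_{\tors}))=K^{\ab}$, whence $-I\notin\Im(\rho_E|_K)$ and the index is at least $2$; combined with the divisibility bound from \cite[Theorem 1.2]{lozanoCMreps} this gives exactly $2$. Your route buys uniformity (no case division into twists, no machine computation, and the same argument works verbatim for any CM curve over $\QQ$ with maximal CM order and $j\neq 0,1728$, i.e.\ it reproves the relevant cases of Theorem \ref{prop:CM_Image}), at the cost of losing what the paper's proof also delivers: an explicit finite level and entanglement field witnessing the index drop, which is the kind of information the paper cares about. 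Two small points you should nail down with references rather than assertion: that elements of $G_\QQ\setminus G_K$ land outside $\cC_{\delta,\phi}(\widehat{\Z})$ (so that the coset character really cuts out $K$; this is part of the setup in \cite{lozanoCMreps}), and the precise form of the main theorem of CM you invoke, $K\bigl(j(E),h(E[\mathfrak{m}])\bigr)$ equal to the ray class field modulo $\mathfrak{m}$, together with the observation that its union over all $\mathfrak{m}$ is $K^{\ab}$.
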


\begin{proof}
Let $E/\QQ$ be an elliptic curve with CM by an order $\OO_{K,f}$ where $K = \QQ(\sqrt{-2})$, and define auxiliary curves: 
\begin{align*}
 E_1\colon y^2 &= x^3+x^2-3x+1,\ \ \ \ \ \ \ \ \ \ \
 E_2\colon y^2 = x^3-x^2-13x+21,\\
 E_3\colon y^2 &= x^3+x^2-13x-21,\hbox{\ \ \ \ \ \ \ \ }
 E_4\colon y^2 = x^3-x^2-3x-1.
 \end{align*}
Then $E_1,\dots ,E_4$ all have CM by $\ZZ[\sqrt{-2}]$ and form a complete set of quadratic twists by $-1 ,2,$ and $-2$. That is, they fit into the following diagram.  
$$\xymatrix{
 E_1 \ar@{<->}[rr]^{\hbox{Twist by 2}} \ar@{<->}[dd]_{\hbox{Twist by $-1$}} & & E_2  \ar@{<->}[dd]^{\hbox{Twist by $-1$}}\\
& & \\
 E_4 \ar@{<->}[rr]_{\hbox{Twist by 2}}& &E_3 
}$$
Because of this, there exists a unique square free, odd, positive integer $d$, such that $E$ is the quadratic twist by $d$ of exactly one of $E_1$, $E_2$, $E_3$ or $E_4$. We write $E=E_i^d$ for some $i=1,\ldots,4$. Using \texttt{Magma}, for each $i = 1,\dots,4$, we compute that $[\QQ(E_i[8]):\QQ] = 32$ and $\mathcal{N}_{\delta,\phi}(8)$ has size 64. 
%
%
Therefore, for each of these curves $[\mathcal{N}_{\delta,\phi}(8):\Im(\rho_{E,8})] = 2$, which forces $[\mathcal{N}_{\delta,\phi}:\Im(\rho_E)] = 2$ as well. 

Letting $F$ be the splitting field of $f(x) = x^8+6x^4+1$ and 
\[L_d = \QQ\left(\sqrt{d(2+\sqrt{2})}\right),\] 
a computation of the $4$-division field of $E_i$ shows that $\QQ(E_i[4]) = FL_1$, for $i=1,2,3,4$, and the $4$-division field of the twist is $\QQ(E[4])=FL_d$. Noticing that both $\QQ(E[4])$ and $\QQ(\zeta_{16}) = \QQ(\sqrt{-1}, \sqrt{2+\sqrt{2}})$ are contained in $\QQ(E[16])$ gives us that 
\[
\QQ(\sqrt{d}) \subseteq \QQ\left(\sqrt{d},\sqrt{2+\sqrt{2}}\right ) = \QQ\left(\sqrt{d(2+\sqrt{2})},\sqrt{2+\sqrt{2}}  \right)\subseteq \QQ(E[16]).
\]
%
%
%

Moreover, since $d$ is assumed to be odd and positive, the conductor of $L_d$ is exactly 16 times the conductor of $\QQ(\sqrt{d})$. Indeed, the field $L_d$ is contained inside $\QQ(\sqrt{d})\QQ(\zeta_{16})^+$, and so if $\QQ(\sqrt{d})/\QQ$ is of conductor $n$, then $L_d\subseteq\QQ(\zeta_{16n})$  and $L_d$ is not contained in any smaller cyclotomic field. We also note that since $d$ is positive, $\QQ(\sqrt{-2}) \neq \QQ(\sqrt{d})$ and so any entanglement that involves $\QQ(\sqrt{d})$ is not one of the CM entanglements already accounted for in $\mathcal{N}_{\delta,\phi}(\widehat{\mZ})$. These two things force the mod $16n$ image to be index 2 inside of $\mathcal{N}_{\delta,\phi}(16n)$ and the $\Im(\rho_{E}) = \pi^{-1}_{16n}(\Im(\rho_{E,16n}))$. 
\end{proof} 

\begin{prop}\label{thm-j1728-index2}
	Let $E/\Q$ be an elliptic curve with $j(E) =1728$ and choose compatible bases of $E[n]$ for each $n\geq 2$ such that the image of $\rho_{E}$ is contained in $\mathcal{N}_{\delta,\phi}(\widehat{\Z})$. Then, the index of the image of $\rho_{E}$ in $\mathcal{N}_{\delta,\phi}(\widehat{\Z})$ is $2$ or $4$.
\end{prop}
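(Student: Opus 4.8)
The plan is to reduce everything to the $2$-adic behavior of $\rho_E$ and exploit the quartic-twist structure of $j=1728$ curves. Since $j(E)=1728$, $E$ has CM by the maximal order $\mathcal{O}_K=\mathbb{Z}[i]$ of $K=\mathbb{Q}(i)$, so in the notation of Definition \ref{defn-CMimage} we have $\delta=-1$ and $\phi=0$ (as $\Delta_K=-4\equiv 0\bmod 4$), and the relevant group is $\mathcal{N}_{-1,0}(\widehat{\mathbb{Z}})$. By the Remark following Definition \ref{defn-CMverticalentanglement} (that is, \cite[Theorem 1.2]{lozanoCMreps}), the index $[\mathcal{N}_{-1,0}(\widehat{\mathbb{Z}}):\Im(\rho_E)]$ divides $|\mathcal{O}_K^\times|=4$, hence equals $1$, $2$, or $4$; thus it suffices to rule out index $1$, i.e.\ to show $\Im(\rho_E)\subsetneq\mathcal{N}_{-1,0}(\widehat{\mathbb{Z}})$. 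Since $[\mathcal{N}_{-1,0}(\widehat{\mathbb{Z}}):\mathcal{C}_{-1,0}(\widehat{\mathbb{Z}})]=2$ and $\rho_E$ sends complex conjugation into the nontrivial coset of $\mathcal{C}_{-1,0}(\widehat{\mathbb{Z}})$, this index also equals $[\mathcal{C}_{-1,0}(\widehat{\mathbb{Z}}):\Im(\rho_E|_{G_K})]$, so it is equivalent to show that the action of $G_K$ on $T(E)$ does not fill $\mathcal{C}_{-1,0}(\widehat{\mathbb{Z}})\cong\widehat{\mathbb{Z}[i]}^{\,\times}$. Every $E/\mathbb{Q}$ with $j(E)=1728$ has the form $E\colon y^2=x^3-dx$ with $d$ fourth-power-free, and it is enough to work with the $2$-adic part of $\rho_E$, since $[\mathcal{N}_{-1,0}(\widehat{\mathbb{Z}}):\Im(\rho_E)]\geq[\mathcal{N}_{-1,0}(2^\infty):\Im(\rho_{E,2^\infty})]$.

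I would first treat the case $\mathbb{Q}(E[2])=\mathbb{Q}$ --- equivalently $d$ a perfect square, equivalently $E$ a quadratic twist of $y^2=x^3-x$: by Example \ref{exam:CMexample} the image of $\rho_{E,2}$ is trivial while $|\mathcal{N}_{-1,0}(2)|=2$, so $[\mathcal{N}_{-1,0}(2):\Im(\rho_{E,2})]=2$ and we conclude. In the remaining case $\mathbb{Q}(E[2])=\mathbb{Q}(\sqrt d)$ is quadratic, so the mod $2$ image already equals $\mathcal{N}_{-1,0}(2)$ and one must pass to a higher $2$-power level. The key structural input is that $E$ becomes isomorphic over $\mathbb{Q}(d^{1/4})$ to $E_1\colon y^2=x^3-x$ via $(x,y)\mapsto(\sqrt d\,x,\,d^{3/4}y)$, so $\mathbb{Q}(i)(E[2^k])\subseteq\mathbb{Q}(i)\bigl(E_1[2^k],\,d^{1/4}\bigr)$, while at the same time $\mathbb{Q}(i)(E[2^k])/\mathbb{Q}(i)$ is abelian because all of $\mathrm{End}(E)$ is defined over $\mathbb{Q}(i)$. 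Computing $\mathbb{Q}(i)(E_1[2^k])$ explicitly (it is an explicit abelian extension of $\mathbb{Q}(i)$ sitting in a ray class field of small conductor; e.g.\ $\mathbb{Q}(E_1[4])=\mathbb{Q}(\zeta_8)$) and combining with Kummer theory for $d^{1/4}$ over $\mathbb{Q}(i)$, one bounds $[\mathbb{Q}(i)(E[2^k]):\mathbb{Q}(i)]$ strictly below $|\mathcal{C}_{-1,0}(2^k)|$ for $k$ large enough, which gives $[\mathcal{N}_{-1,0}(2^k):\Im(\rho_{E,2^k})]\geq2$. Equivalently, this amounts to checking that the CM Hecke character of $E/\mathbb{Q}(i)$ --- which is constrained by the symmetry under complex conjugation coming from the descent of $E$ to $\mathbb{Q}$ --- cannot have image exhausting $\widehat{\mathbb{Z}[i]}^{\,\times}$ at the prime above $2$.

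The main obstacle is making that last bound \emph{uniform in $d$} in the quadratic case: unlike Theorem \ref{prop:CM_Image} and the $\mathbb{Q}(\sqrt{-2})$ proposition, the $j=1728$ curves form infinitely many quartic-twist classes rather than a single quadratic-twist family, and twisting by $d^{1/4}$ can enlarge the low-level $2$-power division fields all the way up to what $\mathcal{N}_{-1,0}$ permits --- for instance $[\mathbb{Q}(i)(E[4]):\mathbb{Q}(i)]=8=|\mathcal{C}_{-1,0}(4)|$ for $E\colon y^2=x^3-3x$ --- so there is no single finite level at which the drop in the index is visible for every $d$. Controlling $\mathbb{Q}(i)(E[2^k])$ for all $d$ at once, via the $\mu_4$-twisting comparison with $y^2=x^3-x$ (equivalently, via the Hecke character), is the crux; once that is done, the divisibility $[\mathcal{N}_{-1,0}(\widehat{\mathbb{Z}}):\Im(\rho_E)]\mid 4$ from \cite[Theorem 1.2]{lozanoCMreps} immediately improves the resulting lower bound of $2$ to the stated conclusion that the index is $2$ or $4$.
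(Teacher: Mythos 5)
Your opening reduction is where the proof goes wrong. After correctly quoting \cite[Theorem 1.2]{lozanoCMreps} to get that the index divides $4$, you reduce the whole problem to showing $[\mathcal{N}_{-1,0}(2^\infty):\Im(\rho_{E,2^\infty})]\geq 2$, i.e.\ to finding a drop at some $2$-power level. But for $j=1728$ the $2$-adic image \emph{can} equal all of $\mathcal{N}_{-1,0}(2^\infty)$: by (the proof of) \cite[Theorem 9.5]{lozanoCMreps}, when $s\notin\pm(\Q^\times)^2$ and $s\notin\pm2(\Q^\times)^2$ (writing $E\colon y^2=x^3+sx$) the $2$-adic image is full and $\Q(E[4])=\Q(i,\sqrt{2},\sqrt[4]{-s})$ has degree $|\mathcal{C}_{-1,0}(4)|$ over $\Q(i)$ --- your own example $y^2=x^3-3x$ is of this kind. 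So the ``crux'' you defer (a uniform bound $[\Q(i)(E[2^k]):\Q(i)]<|\mathcal{C}_{-1,0}(2^k)|$ for large $k$) is not merely hard to make uniform in $d$; it is false for exactly the curves that matter. The Kummer-theoretic route also cannot save it quantitatively: the containment $\Q(i)(E[2^k])\subseteq\Q(i)(E_1[2^k],d^{1/4})$ only bounds the degree by $4\cdot[\Q(i)(E_1[2^k]):\Q(i)]$, and since the index of $E_1\colon y^2=x^3-x$ itself divides $4$ by the same theorem, this never yields a strict inequality against $|\mathcal{C}_{-1,0}(2^k)|$.

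The paper's proof necessarily leaves the $2$-adic world: assuming adelic index $1$, it first uses \cite[Theorem 9.5]{lozanoCMreps} to force $s\notin\pm(\Q^\times)^2,\pm2(\Q^\times)^2$ and to pin down $\Q(E[4])=\Q(i,\sqrt{2},\sqrt[4]{-s})$, then writes $s=2^{e_1}mn^2$ with $m$ odd and square-free, so that $\sqrt{m},\sqrt{-m}\in\Q(E[4])$, while the cyclotomic determinant of $\rho_{E,m}$ puts one of $\sqrt{\pm m}$ inside $\Q(E[m])$. This produces a non-trivial quadratic intersection $\Q(i,\sqrt{m})\subseteq\Q(i,E[4])\cap\Q(i,E[m])$, so over $\Q(i)$ the mod $4m$ image cannot surject onto $\cC_{-1,0}(4m)\cong\cC_{-1,0}(4)\times\cC_{-1,0}(m)$, giving index at least $2$ and the contradiction. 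In other words, the obstruction is a horizontal entanglement between the $4$-division field and an \emph{odd} division field depending on $s$, and your reduction to the $2$-adic part discards precisely the levels where the drop occurs. Your correct observations (index divides $4$; it suffices to rule out index $1$; one may work over $\Q(i)$ inside the Cartan; the case $\Q(E[2])=\Q$ is immediate) are all consistent with the paper, but the central step needs to be replaced by an argument of this cross-level type.
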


\begin{proof}
	Let $E/\Q$ be an elliptic curve with $j_{K,f}=1728$, given by a Weierstrass model $y^2=x^3+sx$, for some $s\in\Q$. 
	We choose compatible bases of $E[n]$ for each $n\geq 2$ such that the image of $\rho_{E}$ is contained in $\mathcal{N}_{\delta,\phi}(\widehat{\Z})$.	
	By \cite[Theorem 1.2]{lozanoCMreps}, the image of $\rho_{E}$ has index dividing $\# (\mathcal{O}_{K,f}^\times) = 4$, since $K=\Q(i)$ and $f=1$ for $j_{K,f}=1728$.

	Suppose towards a contradiction that the index is $1$. 
	Then, the image of $\rho_{E,2^\infty}$ must have index $1$ in $\mathcal{N}_{\delta,\phi}(2^\infty)$. 
	By the proof of \cite[Theorem 9.5]{lozanoCMreps} which contains a description of the possible $2$-adic images when $j_{K,f}=1728$, we have that $s\notin \pm (\Q^\times)^2$ nor $\pm 2(\Q^\times)^2$, and $\Q(E[4])=\Q(i,\sqrt{2},\sqrt[4]{-s})$. 
	If we write $s=2^{e_1}\cdot m\cdot n^2$ with $m$ an odd square-free integer, then $\sqrt{m}$ and $\sqrt{-m}$ belong to $\Q(E[4])$. 
	Now consider $\rho_{E,m}\colon \GQ \to \GL(2,\Z/m\Z)$. Since $\det(\rho_{E,m})$ is the $m$-th cyclotomic character, then one of $\sqrt{\pm m}$ is contained in $\Q(E[m])$. 
	Therefore $\Q(i,\sqrt{m})/\Q(i)$ is a non-trivial quadratic extension, and $\Q(i,\sqrt{m})\subseteq \Q(i,E[4])\cap \Q(i,E[m])$. 
	It follows that the image of $\rho_{E,4m}\colon \Gal(\overline{\Q(i)}/\Q(i)) \to \GL(2,\Z/4m\Z)$ is not surjective onto $\cC_{-1,0}(4m)\cong \cC_{-1,0}(4)\times \cC_{-1,0}(m)$, and therefore the image of $\rho_{E,4m}$ in $\mathcal{N}_{-1,0}(4m)$ is of index at least $2$, which is a contradiction to our initial assumption.
\end{proof}

\begin{prop}\label{thm-j0-index2}
	Let $E/\Q$ be an elliptic curve with $j_{K,f}=0$, and choose compatible bases of $E[n]$ for each $n\geq 2$ such that the image of $\rho_{E}$ is contained in $\mathcal{N}_{\delta,\phi}(\widehat{\Z})$. Then, the index of the image of $\rho_{E}$ in $\mathcal{N}_{\delta,\phi}(\widehat{\Z})$ is $2$ or $6$.
\end{prop}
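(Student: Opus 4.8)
The plan is to imitate the argument of Proposition~\ref{thm-j1728-index2}. Since $j_{K,f}=0$ we have $K=\Q(\sqrt{-3})$, $f=1$, and $\mathcal O_{K,f}^\times=\mathcal O_K^\times$ has order $6$, so by \cite[Theorem~1.2]{lozanoCMreps} the index $I:=[\mathcal N_{\delta,\phi}(\widehat{\mZ}):\Im(\rho_E)]$ divides $6$; hence it suffices to show $I$ is even, which rules out the two remaining values $1$ and $3$. Because $E$ is defined over $\Q$ with $K$ imaginary quadratic, complex conjugation acts $\mathcal O_K$-semilinearly on $T(E)$, so $\Im(\rho_E)$ surjects onto $\mathcal N_{\delta,\phi}/\cC_{\delta,\phi}\cong\ZZ/2\ZZ$; a standard index identity then gives $I=[\cC_{\delta,\phi}(\widehat{\mZ}):\Im(\rho_E|_{G_K})]$, and for every modulus $N$ the level-$N$ index $[\cC_{\delta,\phi}(N):\Im(\rho_{E,N}|_{G_K})]$ divides $I$. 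Thus it is enough to find one $N$ at which this index is even, and for that it is enough to exhibit a quadratic extension $L/K$ with $L\neq K$ contained in $K(E[a])\cap K(E[b])$ for some $a,b\mid N$: such an entanglement forces $\Im(\rho_{E,N}|_{G_K})$ into the fibre product of $\Gal(K(E[a])/K)$ and $\Gal(K(E[b])/K)$ over their common quotient $\Gal(L/K)\cong\ZZ/2\ZZ$, which has index $2$ in the direct product, so the level-$N$ index is even and therefore $I\in\{2,6\}$.

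To produce such an $L$, write $E\colon y^2=x^3+s$ with $s\in\Q^\times$, and note that $2$ is inert in $K$, so $\delta=-1$, $\phi=1$ and $\cC_{-1,1}$ is a non-split Cartan. I would first record the structural inputs for $j=0$: $\Delta_E\equiv-3\pmod{(\Q^\times)^2}$, so $K=\Q(\sqrt{\Delta_E})$ is the unique quadratic subfield of $\Q(E[2])$; and $E$ carries a rational $3$-isogeny whose kernel is defined over $\Q(\sqrt{s})$, so $\Q(\sqrt s)\subseteq\Q(E[3])$. I would then use the explicit description of the possible $2$-adic images of curves with $j=0$ from \cite{lozanoCMreps} (in the same spirit as the proof of \cite[Theorem~9.5]{lozanoCMreps} for $j=1728$) to determine $\Q(E[2^k])$ for small $k$ as an explicit field built from $2$-power roots of unity and radicals of $s$; from this one reads off that $\Q(E[2^k])$ contains a quadratic field $\Q(\sqrt u)$ where $u$ is, up to sign (since $i\in\Q(E[4])$), the squarefree part of $s$. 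For $s$ outside a short explicit list of classes in $\Q^\times/(\Q^\times)^6$ one has $\Q(\sqrt u)\neq K$ (equivalently $u\not\equiv 1,-3$), and then one takes $L=K(\sqrt u)$ with $b$ either $3$ (when $u\equiv s$, using $\Q(\sqrt s)\subseteq\Q(E[3])$) or the conductor of $\Q(\sqrt u)/\Q$ (using $\Q(\sqrt u)\subseteq\Q(\zeta_b)\subseteq\Q(E[b])$ by the Weil pairing), and $a=2^k$. The finitely many remaining classes of $s$ — those with $u\equiv 1$ (i.e.\ $s$ a square times a power of $2$), those with $u\equiv-3$, and the curves with extra rational torsion — I would dispatch directly, on a representative curve and its relevant twists with \texttt{Magma}, exactly as in the treatment of the $K=\Q(\sqrt{-2})$ case above.

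The main obstacle is the middle step: extracting a uniform-in-$s$ quadratic subfield of a $2$-power division field from the $2$-adic image classification of \cite{lozanoCMreps} and checking it genuinely escapes $K$. Once a non-CM quadratic entanglement over $K$ is in hand, the fibre-product/Weil-pairing bookkeeping is identical to the $j=1728$ case, and the only new feature compared with Proposition~\ref{thm-j1728-index2} is that here a single $\ZZ/2\ZZ$-entanglement must rule out both $I=1$ and $I=3$, which it does precisely because it forces $2\mid I$.
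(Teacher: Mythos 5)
Your overall reduction is the same as the paper's: the index divides $6$ by \cite[Theorem 1.2]{lozanoCMreps}, so it suffices to force it to be even by exhibiting a quadratic intersection of division fields that is not explained by $K=\Q(\sqrt{-3})$, and your fibre-product/conductor bookkeeping for that last implication is fine. The genuine gap is exactly the step you flag as ``the main obstacle'': the claim that some $\Q(E[2^k])$ contains $\Q(\sqrt{u})$ with $u$ the squarefree part of $s$ up to sign. This is nowhere established, and it fails at least at level $4$: for $E\colon y^2=x^3+s$ one computes $\Q(E[2])=\Q(\zeta_3,\sqrt[3]{s})$ and $\Q(E[4])=\Q(\zeta_{12},\sqrt[3]{s},\beta)$ with $\beta^2=3\sqrt{3}\,s/2$, and since $\Q(\beta)$ is a non-Galois quartic whose resolvent quadratics are $\Q(\sqrt{3})$, $\Q(\sqrt{-3})$, $\Q(i)$, the only quadratic subfields of $\Q(E[4])$ for generic $s$ are cyclotomic --- no $\Q(\sqrt{\pm s})$ appears. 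The $j=1728$ template you imitate puts the $s$-radical in the $4$-division field, but for $j=0$ the $s$-dependent quadratic lives in the $3$-division field: the kernel of the rational $3$-isogeny is $\{O,(0,\pm\sqrt{s})\}$, and indeed $\Q(E[3])=\Q(\sqrt{-3},\sqrt{s},\sqrt[3]{4s})$. The paper's proof uses precisely this: if $s\equiv t^2$ or $-3t^2$ modulo squares, the degree of $\Q(E[3])$ already drops, so the mod-$3$ index is even; otherwise $\Q(\sqrt{s})\subseteq\Q(E[3])$ is a quadratic field different from $K$ with conductor $n\neq 3$, producing an unexpected intersection of the $3$- and $n$-division fields and hence an even index. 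You list $\Q(\sqrt{s})\subseteq\Q(E[3])$ as a ``structural input'' but never use it as the actual source of the entanglement, which is where your argument needs it.

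A secondary problem is your plan to dispatch the leftover classes ($u\equiv 1$, $u\equiv -3$, extra torsion) ``on a representative curve and its relevant twists with \texttt{Magma}.'' For $j=0$ these are not finitely many curves up to quadratic twist: the curves $y^2=x^3+t^2$ (resp.\ $y^2=x^3-3t^2$) as $t$ varies are sextic twists of one another, so a finite computation on one curve and its quadratic twists does not cover them; one needs an argument uniform in $t$, which the explicit level-$3$ computation supplies for free (the $3$-division field then has degree at most $6$, forcing even index already mod $3$). If you replace your $2$-adic step by the $3$-division-field computation, your argument collapses onto the paper's proof.
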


\begin{proof}
	Let $E/\Q$ be an elliptic curve with $j_{K,f}=0$, given by a Weierstrass model $y^2=x^3+s$, for some $s\in\Q^\times$. 	
	We choose compatible bases of $E[n]$ for each $n\geq 2$ such that the image of $\rho_{E}$ is contained in $\mathcal{N}_{\delta,\phi}(\widehat{\Z})$.	
	By \cite[Theorem 1.2]{lozanoCMreps}, the image of $\rho_{E}$ has index dividing $\# (\mathcal{O}_{K,f}^\times) = 6$, since $K=\Q(\zeta_3)$ and $f=1$ for $j_{K,f}=0$. 
	A straight forward computation shows that in this case $\QQ(E[3]) = \QQ(\sqrt{-3},\sqrt{s},\sqrt[3]{4s})$. 
	So if $s = t^2$ or $-3t^2$ for some $t\in \QQ^\times$, then $[\mathcal{N}_{\delta,\phi}(3) : \Im(\rho_{E,3})]$ is divisible by 2 completing this case. 
	Otherwise, we have that $\QQ(\sqrt{s}) \subseteq \QQ(E[3])$ and $n$, the conductor of $\QQ(\sqrt{s})/\QQ$, is not $3$, and $\QQ(\sqrt{s})\neq K = \QQ(\zeta_3)$ (thus the entanglement described here is not a CM entanglement). 
	This forces $E$ to have an unexpected quadratic intersection between its $3$ and $n$ division fields forcing $[ \mathcal{N}_{\delta,\phi}(\widehat{\Z}): \Im(\rho_{E}) ]$ to be even. 
\end{proof}

\section{\bf Entanglements for higher dimensional abelian varieties}
\label{sec:explainedhigherdim}
In this final section, we investigate entanglements for higher dimensional abelian varieties. 

\subsection{Galois representations attached to principally polarized abelian varieties}\label{subsec:GaloisrepPPAV}
Before getting to entanglements, we describe the mod $m$ image of Galois for a principally polarized abelian variety $A/\mQ$ of dimension $g\geq 1$. 
For each positive integer $m$, denote by $A[m]$ the kernel of the multiplication-by-$m$ map $[m]\colon A\to A$. 
We have that $A[m]$ is a finite group scheme over $\mQ$ and that $A[m](\overline{\mQ})$ forms a finite subgroup of $A(\overline{\mQ})$, called the \cdef{$m$-torsion subgroup}, which is isomorphic to $(\zZ{m})^{2g}$. 
$A[m]$ is a finite \'etale $\mQ$-group scheme, which we will identify with the $\Aut(\overline{\mQ}/\mQ)$-moudle $A[m](\overline{\mQ})$.  

As with elliptic curves, we can make the following definition of an entanglement. We note that this definition makes sense for any field of characteristic zero $K$, but we restrict to the case of $K = \mQ$ for simplicity. 

\begin{defn}
Let $A/\QQ$ be a principally polarized abelian variety, and let $a,b$ be positive integers. We say that $A$ has an \cdef{$(a,b)$-entanglement} if 
\[
K = \QQ(A[a]) \cap \QQ(A[b])\neq \QQ(A[d]),
\] 
where $d=\gcd(a,b)$. 
The \cdef{type} $T$ of the entanglement is the isomorphism class of $\Gal(K/\QQ(A[d]))$.
\end{defn}

In \cite{danielsMorrow:Groupentanglements}, the first and last author provided a group theoretic definition of entanglements for elliptic curves. 
To describe a similar construction for principally polarized abelian varieties $A/\mQ$, we recall the mod $m$ Galois representation attached to $A$.

The absolute Galois group $G_{\mQ}$ acts on $A(\overline{\mQ})$, and since this action is compatible with the group law on $A(\overline{\mQ})$, it gives rise to an action of $G_{\mQ}$ on the $m$-torsion subgroup $A[m]$ for each positive integer $m$. 
This action produces a Galois representation 
\[
G_{\mQ} \to \Aut(A[m]) \simeq \GL(2g,\zZ{m}).
\]

There is a constraint on the image of this above representation coming from the Weil pairing. 
More precisely, for each $m$, let $\mu_m$ be the group of $m$-th roots of unit in $\overline{\mQ}$ and let $\chi_m\colon G_{\mQ}\to \mu_m$ denote the mod $m$ cyclotomic character. 
By composing the Weil pairing with the principal polarization isomorphism $A\to A^{\vee}$, we have an alternating non-degenerate bilinear form
\[
e_m\colon A[m] \times A[m] \to \mu_m.
\]
Since the Weil pairing is $G_{\mQ}$-equivariant, the image of the above Galois representation is constrained to lie in $\GSp(2g,\zZ{m})$ i.e., the group of $2g\times 2g$ general symplectic matrices with coefficients in $\zZ{m}$. 
Therefore, we obtain the \cdef{mod $m$ Galois representation}
\[
\rho_{A,m}\colon G_{\mQ}\to \GSp(2g,\zZ{m}). 
\]

\subsection{Group theoretic definition of entanglements for principally polarzied abelian varieties}
With this construction, we can provide similar group theoretic definitions of explained and unexplained entanglements for principally polarized abelian varieties $A/\mQ$ as follows. 
In the case of elliptic curves over $\mQ$, the determinant of the image of the mod $m$ representation is the mod $m$ cyclotomic character, and surjectivity of this map allowed us to define the notion of explained and unexplained entanglements.

For abelian varieties $A/\mQ$ of dimension greater than one, the determinant of the image of mod $m$ representation does not always correspond to the mod $m$ cyclotomic character; we refer the reader to \cite[Lemma 4.5.1]{ribet1976galois} for a discussion of when this holds. 
As such, we cannot directly transport the definitions of explained and unexplained entanglements from elliptic curves to arbitrary principally polarized abelian varieties $A/\mQ$. 

That said, if we replace the determinant with the similitude character, then we can obtain our desired result. 
Let $\nu\colon\GSp(2g,\zZ{m}) \to \mu_m$ denote the similitude character. 
Note that the composition
\[
\begin{tikzcd}
G_{\mQ} \arrow{r}{\rho_{A,m}} & \GSp(2g,\zZ{m}) \arrow{r}{\nu} & \mu_m
\end{tikzcd}
\]
is the mod $m$ cyclotomic character $\chi_{m}$, and since $A$ is defined over $\mQ$, the map $\chi_{m}$ is surjective (see e.g., \cite[Lemma 2.1]{lombardo:Explicitsurjectivity}). 
Recall that Remark \ref{rem:Weilexplained} provides an equivalence between explained and Weil entanglements for elliptic curves, and so the above description allows us to make group theoretic definitions of Weil and non-Weil entanglements for principally polarized abelian varieties $A/\mQ$ as in \cite[Definitions 3.7 \& 3.9]{danielsMorrow:Groupentanglements} where we replace the map $\det(\cdot)$ with the similitude character $\nu(\cdot)$. 
While these group theoretic considerations take some care, the non-group theoretic definitions of abelian and Weil entanglements (Definitions \ref{defn:abelian-type} and \ref{defn:weil-type}) can be carried over \textit{mutatis mutandis}. 
The definition of CM entanglements (Definition \ref{defn:CM-entanglement}) is more subtle. The main issue is we do not have a complete description of the geometric endomorphism ring of an arbitrary principally polarized abelian variety; however, we do have such a description for abelian surfaces, the so-called Albert's classification (see e.g., \cite[p.~203]{mumford1970abelian}).

\subsection{Weil entanglements for higher dimensional abelian varieties}\label{subsec:explainedPPAV}
In \cite[Proposition 8.4]{morrow2017composite} and \cite[Theorem 1.7(2)]{danielsLR:coincidences}, we defined an explicit family of elliptic curves $E/\mQ$ which have an Weil $(2,\ell)$-entanglement of type $\zZ{3}$ where $\ell$ is some prime such that $3\mid \ell-1$. 
We generalize this construction to principally polarized abelian varieties of dimension $g$ as follows.

\begin{theorem}
Let $\ell > 3$ be a prime number. 
Suppose that $\ell - 1 = 2e$ where $e = 2g+1$ is some odd integer. 
There exist infinitely many principally polarized abelian varieties $A/\mQ$ of dimension $g$ which have a Weil $(2,\ell)$-entanglement of type $\zZ{e}$.
\end{theorem}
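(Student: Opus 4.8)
The plan is to imitate in higher dimension the elliptic-curve construction behind Weil $(2,\ell)$-entanglements of type $\zZ{3}$: there one takes $y^{2}=h_{0}(x)$ with $h_{0}$ a cubic whose splitting field is the cyclic cubic subfield of $\mQ(\zeta_{\ell})$, and here I would replace the cubic by a degree $e=2g+1$ polynomial with a prescribed cyclic splitting field and pass to its Jacobian. First, since $\ell$ is prime and $\ell-1=2e$, the group $\Gal(\mQ(\zeta_{\ell})/\mQ)\cong(\zZ{\ell})^{\times}$ is cyclic of order $2e$; let $L$ be the fixed field of its unique subgroup of order $2$, so that $L/\mQ$ is cyclic with $\Gal(L/\mQ)\cong\zZ{e}$ and $L\subseteq\mQ(\zeta_{\ell})$. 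Fix a primitive element $\theta$ of $L/\mQ$, let $h_{0}\in\mQ[x]$ be its monic minimal polynomial (so $\deg h_{0}=e=2g+1$, $h_{0}$ is irreducible and separable, and, $L/\mQ$ being Galois, the splitting field of $h_{0}$ is exactly $L$), and for $s\in\mQ$ set
\[
C_{s}\colon y^{2}=(x-s)\,h_{0}(x),\qquad A_{s}=\Jac(C_{s}).
\]
As $(x-s)h_{0}(x)$ is separable of degree $2g+2$ (no root of the irreducible $h_{0}$ lies in $\mQ$), the curve $C_{s}$ is smooth of genus $g$ and $A_{s}$ is a principally polarized abelian variety of dimension $g$ over $\mQ$.

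Next I would identify the $2$-division field. The Weierstrass points of $C_{s}$ lie above the $2g+2$ roots of $(x-s)h_{0}(x)$, namely above $s$ and the conjugates of $\theta$; it is standard that for a hyperelliptic Jacobian $J[2]$ is generated by differences of Weierstrass classes and that an element of $G_{\mQ}$ fixes $J[2]$ pointwise if and only if it fixes every Weierstrass point, so $\mQ(A_{s}[2])$ equals the splitting field of $(x-s)h_{0}(x)$ over $\mQ$; since $s\in\mQ$, this is precisely $L$. On the other hand, the similitude character of $\rho_{A_{s},\ell}$ is the mod $\ell$ cyclotomic character by the Weil pairing, and it is surjective because $A_{s}$ is defined over $\mQ$; hence $\mQ(\zeta_{\ell})\subseteq\mQ(A_{s}[\ell])$, so $L\subseteq\mQ(\zeta_{\ell})\subseteq\mQ(A_{s}[\ell])$. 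Combining these, $L\subseteq\mQ(A_{s}[2])\cap\mQ(A_{s}[\ell])$ while $\mQ(A_{s}[\gcd(2,\ell)])=\mQ(A_{s}[1])=\mQ\subsetneq L$, so $A_{s}$ has a $(2,\ell)$-entanglement. To see that it is a Weil entanglement of type $\zZ{e}$, apply Definition \ref{defn:weil-type} with $(a,b)=(2,\ell)$ and $d=1$: here $K_{2}=\mQ(A_{s}[2])\cap\mQ^{\text{ab}}=L$ because $L/\mQ$ is abelian, hence $K_{2}\cap\mQ(\zeta_{\ell})=L$ and $\Gal\bigl(K_{2}\cap\mQ(\zeta_{\ell})/\mQ(\zeta_{1})\bigr)=\Gal(L/\mQ)\cong\zZ{e}$.

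Finally I would prove that the family is non-isotrivial. Over $\Qbar$ a hyperelliptic curve is determined up to isomorphism by the $\PGL_{2}(\Qbar)$-orbit of its branch locus, and its Jacobian (with the principal polarization, together with the rational Weierstrass point $(s,0)$ when $g=1$) determines the curve by Torelli; so it suffices to show that the branch loci $B_{s}=\{s\}\cup\{\text{conjugates of }\theta\}$ fall into infinitely many $\PGL_{2}(\Qbar)$-orbits. Choosing a Möbius transformation over $\Qbar$ carrying three of the (fixed) conjugates of $\theta$ to $0,1,\infty$, the normalized branch locus becomes $\{0,1,\infty,c_{4},\dots,c_{e},\varphi(s)\}$ with $c_{4},\dots,c_{e}$ fixed and $\varphi$ a non-constant Möbius function of $s$; hence the induced map $\mA^{1}\to\mathcal{A}_{g}$, $s\mapsto A_{s}$, is non-constant, and since $\mQ$-points are Zariski dense on $\mA^{1}$, the set $\{A_{s}:s\in\mQ\}$ meets infinitely many $\Qbar$-isomorphism classes, each a principally polarized abelian variety of dimension $g$ with a Weil $(2,\ell)$-entanglement of type $\zZ{e}$. (For $g=1$ one may instead simply note that the $j$-invariant of $C_{s}$ is non-constant in $s$, which recovers the elliptic-curve families of \cite{morrow2017composite,danielsLR:coincidences}.)

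I expect the main obstacle to be this last step: producing genuinely infinitely many non-isomorphic $A_{s}$ rather than one abelian variety together with its twists. The essential input is the elementary observation that moving a single branch point while keeping the other $2g+1\ge 3$ fixed moves the configuration in $\mathcal{M}_{0,2g+2}$; everything else is formal. The one computation requiring care is the identification $\mQ(A_{s}[2])=$ (splitting field of $(x-s)h_{0}$), which for the even-degree hyperelliptic model should be justified via the explicit description of $J[2]$ as a Galois subquotient of the permutation module on the roots.
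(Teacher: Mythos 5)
Your proof is correct in substance and shares its core with the paper's argument --- both start from the unique degree-$e$ cyclic subfield $L\subseteq\mQ(\zeta_\ell)$, produce a genus-$g$ hyperelliptic Jacobian whose $2$-division field is exactly $L$, and use the Weil pairing (equivalently, surjectivity of the similitude character) to get $\mQ(\zeta_\ell)\subseteq\mQ(A[\ell])$, so that the entanglement is Weil of type $\zZ{e}$ --- but you take a genuinely different route to ``infinitely many.'' The paper keeps the odd-degree model $y^2=f(x)$ with $f$ Gauss's degree-$e$ period polynomial (for which $\mQ(J[2])$ is immediately the splitting field of $f$, the permutation action being faithful in odd degree) and then cites the Calegari--Chidambaram--Roberts family $y^2=\det(xI-b_1\alpha_1-\cdots-b_e\alpha_e)$, whose specializations off the discriminant locus have literally isomorphic $2$-torsion Galois module. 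You instead fix $h_0$ and move a single rational branch point, $y^2=(x-s)h_0(x)$, proving non-isotriviality directly through the configuration of branch points and Torelli. Your route is more elementary and self-contained (it only needs the $2$-division field to equal $L$, not a fixed $2$-torsion representation, and a non-constant morphism from an open subset of $\mA^1$ to the coarse moduli space has finite fibers, so infinitude follows); the cost is that you work with the even-degree model, where the $2$-torsion analysis is more delicate than in the paper's odd-degree setting.

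On that delicate point, one correction: the statement ``$\sigma$ fixes $J[2]$ pointwise if and only if it fixes every Weierstrass point'' is false for even-degree models when $2g+2=4$, since the Klein four-subgroup of $S_4$ acts trivially on the even-weight subquotient of the permutation module (this is why the $2$-torsion field of the Jacobian of a quartic is governed by the resolvent cubic). That is precisely the case $g=1$, $\ell=7$ of your family. Your conclusion $\mQ(A_s[2])=L$ still holds, because $\Gal$ of $(x-s)h_0$ is cyclic of odd order $e$ and so meets that order-$4$ kernel trivially, while for $g\geq 2$ (degree $2g+2\geq 6$) the action of $S_{2g+2}$ on the subquotient is faithful; phrased this way, the identification of the $2$-division field, and hence the exact type $\zZ{e}$, is secure, but the blanket ``iff'' should be replaced by this case analysis.
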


\begin{proof}
Our assumptions on the divisibility of $\ell - 1$ ensure that the cyclotomic field $\mQ(\zeta_{\ell})$ contains a unique subfield $K/\mQ$ of degree $e$ with Galois group $\zZ{e}$. 
By work of Gauss \cite[art.~337]{gauss1966disquisitiones}, the minimal polynomial of $K$ is of the form
\[
f(x) = x^e + a_1x^{e-1} + a_2x^{e-2} + \cdots + a_{e-1}x + a_e
\]
with coefficients 
\[
a_r = (-1)^{[r/2]} {(\ell - 1)/2 - [(r+1)/2] \choose [r/2]} \quad (0\leq r \leq e), 
\]
where $[\cdot]$ denotes the greatest integer function. 

Consider the hyperelliptic curve $X$ of genus $g$ with affine equation $y^2 = f(x)$. 
Note that since $f(x)$ is odd, we have that $X$ has a rational Weierstrass point $\infty$, and in particular, we know that $X(\mQ)\neq \emptyset$. 
Let $J_X$ denote the Jacobian of $X$, which is a principally polarized abelian variety over $\mQ$ of dimension $g$. 
As with elliptic curves, the $2$-division field $\Q(J_X[2])$ of $J_X$ is given by the splitting field of $f(x)$.
Furthermore, the Weil pairing tells us that $\mQ(\zeta_{\ell})$ is contained in the $\ell$-division field of $J_X$, and thus, $J_X$ has a Weil $(2,\ell)$-entanglement of type $\zZ{e}$. 

To complete the proof, we need to find infinitely many other hyperelliptic curves $X'/\mQ$ which have isomorphic $2$-torsion. 
To do so, we follow the construction from \cite[Section 4.4]{calegariCR:abeliansurfacesfixed2torsion}. 
Let $\alpha_1$ denote the companion matrix of $f(x)$. 
For $j = 1,\dots,e$, let $\alpha_j = \alpha_1^j - k_jI $ where $k_j$ is chosen to make $\alpha_j$ traceless. 
For indeterminates $b_1,\dots, b_e$, the curve 
\[
X(b_1,\dots,b_e)\colon y^2 = \det(xI - b_1\alpha_1 - b_2\alpha_2 - … - b_{e}\alpha_{e}) \subseteq \mA^2_{\mQ(b_1,\dots,b_e)}
\]
will have the same 2-torsion as the original curve $X$. 
After removing the discriminant locus (i.e., the values of $(b_1,\dots,b_e)$ where the above curve is singular), specializations of the curve $X(b_1,\dots,b_e)$ will produce infinitely many hyperelliptic curves $X'/\mQ$ with isomorphic $2$-torsion as $X$.
The Jacobians $J_{X'}$ of these infinitely many hyperelliptic curves $X'$ will yield infinitely many principally polarized abelian varieties $A/\mQ$ of dimension $g$ which have a Weil $(2,\ell)$-entanglement of type $\zZ{e}$.
\end{proof}

\begin{remark}
To conclude, we remark on the difficulty of studying other explained (or even Weil) entanglements for principally polarized abelian varieties not involving the 2-torsion. 
Above in Section \ref{sec:explainedentanglements}, there were two crucial aspects to our approach. 
First, we could construct an elliptic surface $\mathcal{E}/\mQ(t)$ whose specializations correspond to elliptic curves with a prescribed mod $m$ image of Galois; in particular, the moduli space of elliptic curves with a prescribed mod $m$ image of Galois was rational. 
Second, we were able to use the well established theory of elliptic division polynomials to study $m$-division fields. 
Both aspects become more complicated in the higher dimensional setting.

For the first part, we refer the reader to \cite{calegariCR:abeliansurfacesfixed2torsion} for a discussion on the rationality of moduli spaces of principally polarized abelian varieties with a prescribed mod $m$ image of Galois. 
We note that in Theorem 2 of \textit{loc.~cit.~}the authors provide for a given a genus two curve $X/\mQ$ with affine equation $y^2 = x^5 +ax^3 +bx^2 +cx+d$ an explicit parametrization of all other such curves $Y$ with a specified symplectic isomorphism $J_X[3] \simeq J_Y[3]$. 
While the parametrization is explicit, working directly with universal Weierstrass curve associated to this parametrization is difficult due to the complexity of the coefficients.

As for the second part, Cantor \cite{cantor1994analogue} has developed a theory of division polynomials for hyperelliptic curves of genus $g\geq 1$, where in the $g = 1$ setting one recovers the classical division polynomials. 
The degrees of theses polynomials become quite large as the genus grows, so understanding their Galois theory for arbitrary $g$ is difficult. 
However, it would be quite interesting to write computer program which inputs a separable polynomial $f(x)$ and an integer $m$ and outputs the $m$th division polynomial of the hyperelliptic curve with affine equation $y^2 = f(x)$.

Combining the results from \cite{calegariCR:abeliansurfacesfixed2torsion} with such a computer program, one could hope to study $(3,m)$-entanglements of genus $2$ hyperelliptic curves over $\mQ$. 
\end{remark}

\section{\bf Concluding remarks}
\label{sec:concludingremarks}
In this final section, we discuss future avenues of study for explained entanglements. 

\subsection{Recollections on generic polynomials}
In Section \ref{sec:explainedentanglements}, we saw that questions on Weil entanglements are related to questions about specializations of polynomials $P_{\textbf{t}}(X)$ in $\mQ(\textbf{t})[X]$ with $\textbf{t} = (t_1,\dots, t_n)$ parametrizing certain Galois extensions of $\mQ$. 
This latter question is related to the embedding problem (which is a generalization of the inverse Galois problem) and to the existence of generic polynomials and the generic dimension of finite groups. 

Below, we recall the definition of generic polynomials and generic dimension of a finite group. 

\begin{defn}[\protect{\cite[Definition 0.1.1]{jensen2002generic}}]\label{def:genericpoly}
Let $P_{\textbf{t}}(X)$ be a monic polynomial in $\mQ(\textbf{t})[X]$ with $\textbf{t} = (t_1,\dots, t_n)$ and $X$ being indeterminates, and let $\mathbb{L}$ be the splitting field of $P_{\textbf{t}}(X)$ over $\mQ(\textbf{t})$. Suppose that
\begin{itemize}
\item $\mathbb{L}/\mQ(\textbf{t})$ is Galois with Galois group $G$, and that 
\item every $L/\mQ$ with Galois group $G$ is the splitting field of a polynomial $P_{\textbf{a}}(X)$ for some $\textbf{a} = (a_1,\dots, a_n) \in \mQ^n$.
\end{itemize}
We say that $P_{\textbf{t}}(X)$ \cdef{parametrizes} $G$-extensions of $\mQ$ and $P_{\textbf{t}}(X)$ is a \cdef{parametric polynomial}.

The parametric polynomial $P_{\textbf{t}}(X)$ is \cdef{generic} if
\begin{itemize}
\item $P_{\textbf{t}}(X)$ is parametric for $G$-extensions over any field containing $\mQ$.
\end{itemize}
\end{defn}

\begin{defn}[\protect{\cite[Definition 8.5.1]{jensen2002generic}}]
For a finite group $G$, the \cdef{generic dimension} for $G$ over $\mQ$, written $\gd_{\mQ}G$, is the minimal number of parameters in a generic polynomial for $G$ over $\mQ$, or $\infty$ is no generic polynomial exists. 
\end{defn}

\begin{remark}
The generic dimension of $G$ is bounded below by (and when finite, is conjecturally equal to) essential dimension of $G$ (cf.~\cite[Proposition 8.5.2, Conjecture on page 202]{jensen2002generic}).
\end{remark}

We recall two results, which will help us formulate our precise question relating generic dimension of finite groups to Weil entanglements. 

\begin{theorem}[\protect{\cite[Proposition 8.5.5 \& Theorem 8.5.8]{jensen2002generic}}]\label{prop:directproductgeneric}
\begin{enumerate}
\item []
\item Let $G$ and $H$ be finite groups. Then
\[
\gd_{\mQ}G,\, \gd_{\mQ}H \leq \gd_{\mQ}(G\times H) \leq \gd_{\mQ}G \, + \, \gd_{\mQ}H.
\]
\item The only groups of generic dimension 1 over $\mQ$ are $\brk{I}, \zZ{2}, \zZ{3}, \text{ and }S_3.$ 
\end{enumerate}
\end{theorem}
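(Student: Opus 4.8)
The plan is to handle the two items separately — item (1) is a formal property of the invariant $\gd_{\mQ}(\cdot)$, while for item (2) I would reduce to a classification of finite groups by essential dimension. \emph{For item (1), the upper bound $\gd_{\mQ}(G\times H)\le\gd_{\mQ}G+\gd_{\mQ}H$:} take generic polynomials $P_{\mathbf{s}}(X)\in\mQ(\mathbf{s})[X]$ for $G$ in $\gd_{\mQ}G$ parameters and $Q_{\mathbf{t}}(X)\in\mQ(\mathbf{t})[X]$ for $H$ in $\gd_{\mQ}H$ parameters, with disjoint parameter tuples $\mathbf{s},\mathbf{t}$, and form the product $R_{(\mathbf{s},\mathbf{t})}(X):=P_{\mathbf{s}}(X)\,Q_{\mathbf{t}}(X)$. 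Over $\mQ(\mathbf{s},\mathbf{t})$ its splitting field is the compositum of the splitting fields of the two factors, which are linearly disjoint over $\mQ(\mathbf{s},\mathbf{t})$ since the parameter sets are disjoint, so the Galois group is $G\times H$. For genericity, given a $G\times H$-extension $L/K$ with $K\supseteq\mQ$, put $L_{1}=L^{H}$ and $L_{2}=L^{G}$, a $G$-extension and an $H$-extension of $K$ linearly disjoint over $K$; specialize $\mathbf{s}$ so that $P_{\mathbf{s}}$ has splitting field $L_{1}$ and $\mathbf{t}$ so that $Q_{\mathbf{t}}$ has splitting field $L_{2}$, so $R$ specializes to a polynomial with splitting field $L_{1}L_{2}=L$. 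For the lower bounds $\gd_{\mQ}G,\,\gd_{\mQ}H\le\gd_{\mQ}(G\times H)$, I would invoke monotonicity under subgroups: via the equivalence between generic polynomials and generic extensions, a generic polynomial for a finite group $\Gamma$ over $\mQ$ yields one for every subgroup of $\Gamma$ in no more parameters (\cite{jensen2002generic}); apply this to $G\times\{1\}$ and $\{1\}\times H$ inside $G\times H$.

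\emph{For item (2), that the four listed groups do occur:} exhibit one-parameter generic polynomials — $X-t$ for the trivial group, $X^{2}-t$ for $\zZ{2}$, the Shanks simplest-cubic polynomial $x^{3}-tx^{2}+(t-3)x+1$ for $\zZ{3}$ (recalled already in Section~\ref{sec:concludingremarks}), and $X^{3}+tX+t$ for $S_{3}$. For $S_{3}$ one checks that every $S_{3}$-cubic over a field $K\supseteq\mQ$ can be brought, by depression and a change of generating root that does not affect the splitting field, to the shape $X^{3}+pX+q$ with $p,q\in K^{\times}$, and then the scaling $X\mapsto(q/p)X$ normalizes it to $X^{3}+tX+t$ with $t=p^{3}/q^{2}$; the discriminant $-t^{2}(4t+27)$ confirms that the Galois group over $\mQ(t)$ is genuinely $S_{3}$. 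The only delicate point is verifying genericity \emph{over every field containing $\mQ$} rather than mere parametricity over $\mQ$, but in each of the four cases this is a short explicit argument (or a citation to \cite{jensen2002generic}).

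\emph{For item (2), that no other group occurs — the main obstacle:} the decisive observation is that a one-parameter generic polynomial $P_{t}(X)$ for $G$ realizes a versal $G$-torsor with one-dimensional base: for every $K\supseteq\mQ$ and every $G$-extension $L/K$ one has $L=K(\text{roots of }P_{a})$ for some $a\in K$, so $\Spec L\to\Spec K$ is pulled back along $a\colon\Spec K\to\mathbb{A}^{1}_{\mQ}$ from the universal torsor over an open subscheme of $\mathbb{A}^{1}_{\mQ}$. Hence $\mathrm{ed}_{\mQ}(G)\le 1$, compatibly with the bound $\mathrm{ed}_{\mQ}G\le\gd_{\mQ}G$ recalled earlier. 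It then remains to show that the only finite groups with $\mathrm{ed}_{\mQ}(G)\le1$ are $\{I\}$ (the $\mathrm{ed}=0$ case), $\zZ{2}$, $\zZ{3}$ and $S_{3}$; the next candidates, such as $\zZ{4}$, $\zZ{5}$, $\zZ{6}$ and $D_{4}$, all have essential dimension at least $2$ over $\mQ$. I would obtain this step by citing the classification of finite groups of essential dimension one; the self-contained alternative is to push the versality statement far enough to realize $G$ — after at most a quadratic base change of $\mQ$ — inside $\PGL_{2}$ and then enumerate the finite subgroups compatible with the descent datum. Converting ``$\mathrm{ed}_{\mQ}(G)\le1$'' into this explicit four-element list is the step I expect to be the main obstacle, since it is precisely where the arithmetic of $\mQ$, as opposed to that of a general or algebraically closed base field, genuinely enters.
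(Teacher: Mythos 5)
A preliminary remark on the comparison you were asked to survive: the paper does not prove this statement at all; it is quoted, with attribution, from Jensen--Ledet--Yui \cite{jensen2002generic} (Proposition 8.5.5 and Theorem 8.5.8), so there is no internal proof to match against. Your outline is essentially the proof in that source: the upper bound in (1) by multiplying generic polynomials with disjoint parameter sets, the lower bound from the fact that a generic polynomial for a group is generic for all of its subgroups in the same number of parameters (this is Kemper's descent-genericity theorem, a genuine result and not a formal triviality, but citing it is exactly how the bound is obtained in \cite{jensen2002generic}), and (2) by exhibiting one-parameter generic polynomials for $\brk{I}$, $\zZ{2}$, $\zZ{3}$, $S_3$ and excluding everything else via the inequality $\mathrm{ed}_{\mQ}G\leq \gd_{\mQ}G$ together with the Buhler--Reichstein classification of finite groups of essential dimension at most one over $\mQ$. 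Your normalizations for $\zZ{3}$ and $S_3$ (Shanks' simplest cubic; $X^3+tX+t$ via translation to avoid $p=0$ or $q=0$ and the scaling $t=p^3/q^2$) are correct.

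Two steps need repair, or at least honesty about what is being cited. (i) In the product construction, ``linearly disjoint over $\mQ(\mathbf{s},\mathbf{t})$ since the parameter sets are disjoint'' is not a proof: disjointness of the parameters does not by itself rule out the two splitting fields sharing a constant subextension (if both contained, say, $\mQ(i)$, the compositum would have Galois group a proper fibre product inside $G\times H$, violating the first condition of Definition \ref{def:genericpoly}). What saves the argument is that $\mQ$ is algebraically closed in the splitting field of a generic polynomial over its parameter field: if that splitting field contained a number field $k\neq\mQ$, then every specialization would have splitting field containing $k$ (the roots are integral at a specialization because the polynomial is monic, and constants do not collapse under residue), contradicting genericity together with the existence, via Hilbert irreducibility, of $G$-extensions of $\mQ$ linearly disjoint from $k$. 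You must state and use this regularity lemma (or cite it) before the product has Galois group $G\times H$ over $\mQ(\mathbf{s},\mathbf{t})$. (ii) For the converse half of (2), your primary plan --- citing the classification of groups of essential dimension $\leq 1$ over $\mQ$ --- is legitimate and is in effect what \cite{jensen2002generic} does; but the sketched ``self-contained alternative'' (embed $G$ into $\PGL_2$ after a quadratic base change and enumerate finite subgroups) would not suffice as stated, since $\zZ{4}$, $\zZ{6}$, $\zZ{2}\times\zZ{2}$ and $D_4$ all embed into $\PGL_2(\mQ)$ yet have essential dimension $2$ over $\mQ$; it is versality of the $G$-action on a genus-zero curve with a rational point, not the mere embedding, that cuts the list down to $\brk{I}$, $\zZ{2}$, $\zZ{3}$, $S_3$, and that is precisely the content you would be re-proving rather than citing.
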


\subsection{Two questions on Weil entanglements and generic polynomials}
To conclude, we propose two questions concerning the further study of Weil entanglements.

In Subsections \ref{subsec:explained_2,n_type3}, \ref{subsec:explained_3,n_type2}, \ref{subsec:explained_5,n_type4}, and \ref{subsec:explained_7,n_type6}, we encountered several examples of generic polynomials for $\zZ{2}$-extensions and $\zZ{3}$-extensions over $\mQ$, and the results from these sections lead us to ask the following question.

\begin{question}\label{question:explained}
Fix an integer $n$. 
Let $\mathcal{E}_t$ be a family of elliptic curves over $\mQ$. 
Suppose that $ \mQ(x(\mathcal{E}_t[n]))$ contains a quadratic  (resp.~cubic abelian) subfield $K_t $, which is not contained in $\mQ(\zeta_n)$. 
Let $P_t(X)$ denote the quadratic (resp.~cubic) factor of the $n$-division polynomial of $\mathcal{E}_t$. 
\begin{enumerate}
\item Is $P_t(X)$ a generic polynomial for $\zZ{2}$-extensions (resp.~$\zZ{3}$-extensions) of $\mQ$?
\item If $P_t(X)$ is not a generic polynomial for $\zZ{2}$-extensions (resp.~$\zZ{3}$-extensions) of $\mQ$, then which quadratic (resp.~cubic) number fields are realized as Galois groups of specializations of $P_t(X)$? 
\end{enumerate}
\end{question}

The reason we restrict to $K_t /\mQ(t)$ being quadratic or abelian cubic is due to Theorem \ref{prop:directproductgeneric}. 
More precisely, we observed in the above discussion that we essentially have two parameters when studying Weil entanglements:~ one of them coming from the division polynomials (i.e., the $x$-coordinate of the torsion points) and the second coming from twisting (i.e., the $y$-coordinate of the torsion points). 
As the field of definition of a point of order $N$ is generated by the $x$ and $y$ coordinates of the torsion points and twisting will only contribute a $\zZ{2}$-factor, the parameter corresponding to the $x$-coordinates must contribute a $\zZ{2}$ or $\zZ{3}$ factor. 

%

\begin{remark}
If a classification of finite groups with generic dimension $2$ over $\mQ$ existed, then we could make Question \ref{question:explained} more precise. However, no such classification exists; we do not even have a classification of finite groups with essential dimension 2 over $\mQ$ (cf.~\cite[Remark 1.4]{duncan2013essentialdimension2}). 
\end{remark}

The above question asked if one can parametrize the quadratic or cubic abelian extensions which are not contained in certain cyclotomic field. To conclude the section, we discuss how one could systematically study elliptic curves over $\QQ$ with a Weil entanglement. 
First note that a consequence of \cite{brau3in2} is that if $E/\QQ$ has surjective $\Im(\rho_{E,p^\infty})$ for all $p$, then the only possible entanglements are Serre entanglements or a $(2,3)$-entanglement of $S_3$-type coming from an inclusion $\QQ(E[2])\subseteq\QQ(E[3])$. 
Thus, to have a different Weil entanglement, one needs some exceptional prime power. 

For a given prime $p$ and natural number $n$, we consider the groups $G\subseteq \GL(2,\ZZ/p^n\ZZ)$ such that there exists an elliptic curve $E_0/\QQ$ with $\Im (\rho_{E_0,p^n})$ conjugate to $G$. 
If $[G: [G,G]]$ is strictly larger than $\varphi(p^n) = p^{n-1}(p-1)$, then this means that any elliptic curve $E/\QQ$ with $\Im (\rho_{E,p^n})$ conjugate to $G$ has $\QQ(E[p^n]) \cap \QQ^{\ab}$ which is strictly larger than $\QQ(\zeta_{p^n})$. 
For any particular elliptic curve $E/\QQ$ with mod $p^n$ image conjugate to $G$, $(\QQ(E[p^n]) \cap \QQ^{\ab})/\QQ$ is an abelian extension and thus has some conductor, say $f(\QQ(E[p^n]) \cap \QQ^{\ab})/\QQ)) = N$.  
Since $\QQ(\zeta_{p^n})\subseteq\QQ(E[p^n]) \cap \QQ^{\ab}$, we know that $N = p^n\cdot m$ for some $m$, and this $m$ tells us about the level of any explained entanglement that involves the $p^n$-division field of $E$. If $p\nmid m$ then we know that $E/\QQ$ has a Weil $(p^n,m)$-entanglement and if $p\mid m$, then $E$ has an Weil $(p^n,N)$-entanglement. 

For an example, when $p\mid m$, see \cite[Example 3.9]{danielsLR:coincidences}. 
In that example, it is shown that the elliptic curve with LMFDB label \href{https://www.lmfdb.org/EllipticCurve/Q/32a3/}{\texttt{32a3}} has the property that $\QQ(\zeta_{2^{k+1}}) \subseteq \QQ(E[2^k])$ for all $k>1$, and so the conductor of $\QQ(E[2^k])$ is divisible by $2^{k+1}$ for all $k>1$.
In both cases, the type of the Weil entanglement is the Galois group $\Gal( (\QQ(E[p^n]) \cap\QQ^{\ab})/\QQ(\zeta_{p^n}) ).$ 

With this, we see one approach to the study of explained entanglements of elliptic curves over $\QQ$ through the conductor $f((\QQ(E[p^n]) \cap \QQ^{\ab})/\QQ))$ of the extension $(\QQ(E[p^n]) \cap \QQ^{\ab})/\QQ)$. 
More precisely, we propose the following question. 

\begin{question}\label{question:boundsconductor}
For a given prime $p$ and natural number $n$, let $-G\subseteq \GL(2,\ZZ/p^n\ZZ)$ such that $-I \in G$ and there exists an elliptic curve $E/\QQ$ with $\Im(\rho_{E,p^n})$ conjugate to $G$. 
Consider the function
\[
N_{p^n,G}(X) = \# \brk{E/\QQ : \text{$\Im(\rho_{E,p^n})$ is conjugate to $G$ and } f((\mQ(E[p^n]) \cap \mQ^{\ab})/\mQ) < X}
\]
where the count is up to $\overline{\mQ}$-isomorphism. 
For which $p,n,G,$ and $X$, is the function $N_{p^n,G}(X) $ finite?
\end{question}

The work of \cite{sutherlandzywina2017primepower} makes this question concrete when restricting to group of genus 0 and level $p^n$ containing $-I$. 
For each of these groups there is an explicit parametrization for the moduli space $X_G$, and one could use this parametrization to find a 1-parameter family of number fields $K_t/\QQ$ corresponding to the family $(\QQ(E[p^n])\cap\QQ^{\ab})/\QQ$ as $E$ runs over $X_G(\QQ)$, much in the same way as we did above.

\bibliography{refs}{}

\def\cprime{$'$}
\providecommand{\bysame}{\leavevmode\hbox to3em{\hrulefill}\thinspace}
\providecommand{\MR}{\relax\ifhmode\unskip\space\fi MR }
\providecommand{\MRhref}[2]{%
  \href{http://www.ams.org/mathscinet-getitem?mr=#1}{#2}
}
\providecommand{\href}[2]{#2}
\begin{thebibliography}{{LMF}19}

\bibitem[BC20]{bourdon-clark}
Abbey Bourdon and Pete~L. Clark, \emph{Torsion points and {G}alois
  representations on {CM} elliptic curves}, Pacific J. Math. \textbf{305}
  (2020), no.~1, 43--88. \MR{4077686}

\bibitem[BCP97]{Magma}
Wieb Bosma, John Cannon, and Catherine Playoust, \emph{The {M}agma algebra
  system. {I}. {T}he user language}, J. Symbolic Comput. \textbf{24} (1997),
  no.~3-4, 235--265, Computational algebra and number theory (London, 1993).
  \MR{1484478}

\bibitem[BCS17]{BCSTorPointsOnCM}
Abbey Bourdon, Pete~L. Clark, and James Stankewicz, \emph{Torsion points on
  {CM} elliptic curves over real number fields}, Trans. Amer. Math. Soc.
  \textbf{369} (2017), no.~12, 8457--8496. \MR{3710632}

\bibitem[BJ16]{brau3in2}
Julio Brau and Nathan Jones, \emph{Elliptic curves with {$2$}-torsion contained
  in the {$3$}-torsion field}, Proc. Amer. Math. Soc. \textbf{144} (2016),
  no.~3, 925--936. \MR{3447646}

\bibitem[Can94]{cantor1994analogue}
David~G. Cantor, \emph{On the analogue of the division polynomials for
  hyperelliptic curves.}, Journal f{\"u}r die reine und angewandte Mathematik
  \textbf{1994} (1994), no.~447, 91--146.

\bibitem[CCR20]{calegariCR:abeliansurfacesfixed2torsion}
Frank Calegari, Shiva Chidambaram, and David~P. Roberts, \emph{Abelian surfaces
  with fixed 3-torsion}, Proceedings of the Fourteenth Algorithmic Number
  Theory Symposium (ANTS-XIV) (2020), 91--108.

\bibitem[CL21]{chiloyan2020classification}
Garen Chiloyan and \'Alvaro {Lozano-Robledo}, \emph{A classification of
  isogeny-torsion graphs of q-isogeny classes of elliptic curves}, Transactions
  of the London Mathematical Society \textbf{8} (2021), no.~1, 1--34.

\bibitem[CP22a]{CampagnaAndPengo}
Francesco Campagna and Riccardo Pengo, \emph{Entanglement in the family of
  division fields of elliptic curves with complex multiplication}, Pacific J.
  Math. \textbf{317} (2022), no.~1, 21--66. \MR{4440939}

\bibitem[CP22b]{CampagnaPengo:HowBig}
\bysame, \emph{How big is the image of the {G}alois representations attached to
  {CM} elliptic curves?}, Arithmetic, geometry, cryptography, and coding theory
  2021, Contemp. Math., vol. 779, Amer. Math. Soc., [Providence], RI, [2022]
  \copyright 2022, pp.~41--56. \MR{4445770}

\bibitem[DD12]{dok-dok-surjectivity}
Tim Dokchitser and Vladimir Dokchitser, \emph{Surjectivity of mod {$2^n$}
  representations of elliptic curves}, Math. Z. \textbf{272} (2012), no.~3-4,
  961--964. \MR{2995149}

\bibitem[DGJ19]{Daniels2019}
Harris~B. Daniels and Enrique Gonz{\'{a}}lez-Jim{\'{e}}nez, \emph{Serre's
  constant of elliptic curves over the rationals}, Experimental Mathematics
  (2019), 1--19.

\bibitem[DLR19]{danielsLR:coincidences}
Harris~B. Daniels and {\'A}lvaro Lozano-Robledo, \emph{Coincidences of division
  fields}, Preprint, arXiv:1912.05618 (to appear in \textit{Ann. Inst.
  Fourier}) (December 19, 2019).

\bibitem[DM22]{danielsMorrow:Groupentanglements}
Harris Daniels and Jackson~S. Morrow, \emph{A group theoretic perspective on
  entanglements of division fields}, Trans. Amer. Math. Soc. Ser. B \textbf{9}
  (2022), 827--858. \MR{4496973}

\bibitem[Dun13]{duncan2013essentialdimension2}
Alexander Duncan, \emph{Finite groups of essential dimension 2}, Comment. Math.
  Helv. \textbf{88} (2013), no.~3, 555--585. \MR{3093503}

\bibitem[Elk06]{elkies3adic}
Noam~D. Elkies, \emph{Elliptic curves with 3-adic galois representation
  surjective mod 3 but not mod 9}, Preprint, arXiv:0612734 (December 23, 2006).

\bibitem[Gau66]{gauss1966disquisitiones}
Carl~Friedrich Gauss, \emph{Disquisitiones arithmeticae}, vol. 157, Yale
  University Press, 1966.

\bibitem[GJLR16]{GJLR}
Enrique Gonz\'{a}lez-Jim\'{e}nez and \'{A}lvaro Lozano-Robledo, \emph{Elliptic
  curves with abelian division fields}, Math. Z. \textbf{283} (2016), no.~3-4,
  835--859. \MR{3519984}

\bibitem[JLY02]{jensen2002generic}
Christian~U. Jensen, Arne Ledet, and Noriko Yui, \emph{Generic polynomials:
  constructive aspects of the inverse {G}alois problem}, vol.~45, Cambridge
  University Press, 2002.

\bibitem[JM22]{JonesM:Nonabelianentanglements}
Nathan Jones and Ken McMurdy, \emph{Elliptic curves with non-abelian
  entanglements}, New York J. Math. \textbf{28} (2022), 182--229. \MR{4374148}

\bibitem[Kub76]{kubert}
Daniel~Sion Kubert, \emph{Universal bounds on the torsion of elliptic curves},
  Proc. London Math. Soc. (3) \textbf{33} (1976), no.~2, 193--237. \MR{434947}

\bibitem[Lem19]{lemos}
Pedro Lemos, \emph{Some cases of {S}erre's uniformity problem}, Math. Z.
  \textbf{292} (2019), no.~1-2, 739--762. \MR{3968924}

\bibitem[{LMF}19]{lmfdb}
The {LMFDB Collaboration}, \emph{The {L}-functions and modular forms database},
  \url{http://www.lmfdb.org}, 2019, [Online; accessed 30 October 2019].

\bibitem[Lom16]{lombardo:Explicitsurjectivity}
Davide Lombardo, \emph{Explicit surjectivity of {G}alois representations for
  abelian surfaces and {GL}{$_2$}-varieties}, Journal of Algebra \textbf{460}
  (2016), 26 -- 59.

\bibitem[Lom17]{lombardo}
\bysame, \emph{Galois representations attached to abelian varieties of {CM}
  type}, Bull. Soc. Math. France \textbf{145} (2017), no.~3, 469--501.
  \MR{3766118}

\bibitem[{Loz}11]{lozano_little_book}
{\'A}lvaro {Lozano-Robledo}, \emph{Elliptic curves, modular forms, and their
  {$L$}-functions}, Student Mathematical Library, vol.~58, American
  Mathematical Society, Providence, RI; Institute for Advanced Study (IAS),
  Princeton, NJ, 2011, IAS/Park City Mathematical Subseries. \MR{2757255}

\bibitem[{Loz}13]{lozano2013field}
\bysame, \emph{On the field of definition of {$p$}-torsion points on elliptic
  curves over the rationals}, Mathematische Annalen \textbf{357} (2013), no.~1,
  279--305.

\bibitem[{Loz}22]{lozanoCMreps}
\'{A}lvaro {Lozano-Robledo}, \emph{Galois representations attached to elliptic
  curves with complex multiplication}, Algebra Number Theory \textbf{16}
  (2022), no.~4, 777--837. \MR{4467123}

\bibitem[Maz77]{mazurprogramb}
B.~Mazur, \emph{Rational points on modular curves}, Modular functions of one
  variable, {V} ({P}roc. {S}econd {I}nternat. {C}onf., {U}niv. {B}onn, {B}onn,
  1976), 1977, pp.~107--148. Lecture Notes in Math., Vol. 601. \MR{0450283}

\bibitem[Mor19]{morrow2017composite}
Jackson~S. Morrow, \emph{Composite images of {G}alois for elliptic curves over
  {$\mathbb{Q}$} and entanglement fields}, Math. Comp. \textbf{88} (2019),
  no.~319, 2389--2421. \MR{3957898}

\bibitem[Mum70]{mumford1970abelian}
David Mumford, \emph{Abelian varieties}, Studies in mathematics, Tata Institute
  of Fundamental Research, Bombay, 1970.

\bibitem[Rib76]{ribet1976galois}
Kenneth~A. Ribet, \emph{Galois action on division points of abelian varieties
  with real multiplications}, American Journal of Mathematics \textbf{98}
  (1976), no.~3, 751--804.

\bibitem[RS01]{rubinsilverberg}
K.~Rubin and A.~Silverberg, \emph{Mod {$2$} representations of elliptic
  curves}, Proc. Amer. Math. Soc. \textbf{129} (2001), no.~1, 53--57.
  \MR{1694877}

\bibitem[RSZB22]{RouseSZB:elladic}
Jeremy Rouse, Andrew~V. Sutherland, and David Zureick-Brown,
  \emph{{$\ell$}-adic images of {G}alois for elliptic curves over {$\Bbb{Q}$}
  (and an appendix with {J}ohn {V}oight)}, Forum Math. Sigma \textbf{10}
  (2022), Paper No. e62, 63, With an appendix with John Voight. \MR{4468989}

\bibitem[RZB15]{rouse2014elliptic}
Jeremy Rouse and David Zureick-Brown, \emph{Elliptic curves over {$\Bbb Q$} and
  2-adic images of {G}alois}, Res. Number Theory \textbf{1} (2015), Paper No.
  12, 34. \MR{3500996}

\bibitem[Ser72]{serre:OpenImageThm}
Jean-Pierre Serre, \emph{Propri\'{e}t\'{e}s galoisiennes des points d'ordre
  fini des courbes elliptiques}, Invent. Math. \textbf{15} (1972), no.~4,
  259--331. \MR{387283}

\bibitem[Ser98]{serreabelianl-adic}
\bysame, \emph{Abelian {$\ell$}-adic representations and elliptic curves},
  Research Notes in Mathematics, vol.~7, A K Peters, Ltd., Wellesley, MA, 1998,
  With the collaboration of Willem Kuyk and John Labute, Revised reprint of the
  1968 original. \MR{1484415}

\bibitem[Sil94]{silverman2}
Joseph~H. Silverman, \emph{Advanced topics in the arithmetic of elliptic
  curves}, Graduate Texts in Mathematics, vol. 151, Springer-Verlag, New York,
  1994. \MR{1312368}

\bibitem[Sut16]{Sutherland}
Andrew~V. Sutherland, \emph{Computing images of {G}alois representations
  attached to elliptic curves}, Forum Math. Sigma \textbf{4} (2016), e4, 79.
  \MR{3482279}

\bibitem[SZ17]{sutherlandzywina2017primepower}
Andrew~V. Sutherland and David Zywina, \emph{Modular curves of prime-power
  level with infinitely many rational points}, Algebra Number Theory
  \textbf{11} (2017), no.~5, 1199--1229. \MR{3671434}

\bibitem[Zyw15]{zywinapossible}
David Zywina, \emph{On the possible images of the mod $\ell$ representations
  associated to elliptic curves over $\mathbb{Q}$}, Preprint, arXiv:1508.07660
  (August 31, 2015).

\end{thebibliography}
\bibliographystyle{amsalpha}
\end{document}